\newtheorem{theorem}{Theorem}[section]
\newtheorem{lemma}[theorem]{Lemma}
\newtheorem{corollary}[theorem]{Corollary}
\newtheorem{proposition}[theorem]{Proposition}
\theoremstyle{definition}
\newtheorem{example}[theorem]{Example}
\newtheorem{definition}[theorem]{Definition}
\newtheorem{remarks}[theorem]{\bf Remarks}
\newcommand{\N}{\mathbb N}
\newcommand{\Z}{\mathbb Z}
\newcommand{\R}{\mathbb R}
\newcommand{\Q}{\mathbb Q}
\newcommand{\red}{\text{\rm red}}
\newcommand{\equal}{\text{\rm equal}}
\newcommand{\adj}{\text{\rm adj}}
\newcommand{\mon}{\text{\rm mon}}
\newcommand{\DP}{\negthinspace : \negthinspace}
\DeclareMathOperator{\lcm}{lcm} 
 \DeclareMathOperator{\supp}{supp}
\renewcommand{\t}{\, | \,}
\numberwithin{equation}{section}
\begin{document}

\address{Departamento de {\'A}lgebra, Universidad de Granada,
Granada 18071, Espana}

\email{vblanco@ugr.es, pedro@ugr.es}

\address{Institut f\"ur Mathematik und Wissenschaftliches Rechnen \\
Karl-Franzens-Universit\"at Graz \\
Heinrichstra\ss e 36\\
8010 Graz, Austria} \email{alfred.geroldinger@uni-graz.at}

\author{V{\'i}ctor Blanco and Pedro A. Garc{\'i}a-S{\'a}nchez and Alfred Geroldinger}

\thanks{This work was supported by the
{\it Ministerio de Educaci\'on y Ciencia} (Project No. MTM2007-62346),
by the {\it Junta de Andalucia} (Research Group FQM-343), and by
the {\it Austrian Science Fund FWF} (Project No. P21576-N18)}

\keywords{presentations for semigroups, catenary degree, tame degree, sets of lengths, numerical
monoid, Krull monoid}

\subjclass[2010]{20M13, 20M14, 13A05}

\begin{abstract}
Arithmetical invariants---such as sets of lengths, catenary and tame degrees---describe the non-uniqueness of factorizations in atomic monoids.
We study these arithmetical invariants by the monoid of relations and by presentations of the involved monoids. The abstract results will be applied to numerical monoids
and to Krull monoids.
\end{abstract}

\title[Semigroup-theoretical characterizations of arithmetical invariants]{Semigroup-theoretical characterizations of \\ arithmetical invariants  with applications to \\ numerical monoids and Krull monoids}

\maketitle

\bigskip
\section{Introduction} \label{1}
\bigskip

Factorization theory describes the non-uniqueness of factorizations into irreducible elements of atomic monoids by arithmetical invariants, and it studies the
relationship between these arithmetical invariants and algebraic invariants of the objects under consideration. Here, an atomic monoid means a commutative cancellative semigroup with unit element such that every non-unit may be written as a product of atoms (irreducible elements), and main examples  are
the multiplicative monoids consisting of the non-zero elements from a noetherian domain. In abstract semigroup theory, minimal relations and presentations are key tools
to describe the algebraic structure of semigroups. Thus, there should be natural connections between the arithmetical invariants of factorization theory and the presentations of the semigroup. However, only first steps have been made so far to unveil these connections and to apply them successfully for  further investigations. We mention two results in this direction (more can be found in the references). In \cite{C-G-L-P-R06}, it was proved that the catenary degree of a monoid allows a description in terms of $\mathcal R$-equivalence classes (see Proposition \ref{4.6} for details). In \cite{Ph13a}, semigroup-theoretical descriptions are used in the study of the arithmetic of non-principal orders in algebraic number fields.

The aim of the present paper is to explore further the connections between arithmetical invariants and semigroup-theoretical invariants, such as the monoid of  relations
and presentations. We discuss central invariants from factorization theory, such as the $\omega$-invariants (Section \ref{3}), the catenary and monotone catenary degrees (Section \ref{4}), the tame degrees (Section \ref{5}), and finally  Section \ref{6} deals with unions of sets of lengths. We provide---in the abstract setting of atomic monoids---new characterizations or new upper bounds (as in  Propositions \ref{3.3}, \ref{5.2}, Corollary \ref{6.4}), and reveal the influence of special presentations to the arithmetic (as in Theorem \ref{5.5}). Throughout, we apply the abstract results to concrete classes of monoids, mainly to numerical monoids and to Krull monoids
(Corollaries \ref{5.6} and \ref{5.7}, Theorem \ref{6.6}). Moreover,
some of the results have relevance from the computational point of view, as they allow to provide explicit algorithms which partly have been implemented in {\tt GAP} (see Example \ref{3.6} or Remarks \ref{5.8}, and \cite{numericalsgps}). A more detailed discussion of the results will be given at the beginning of each section as soon as we have the required
terminology at our disposal.

\bigskip
\section{Preliminaries} \label{2}
\bigskip

We denote by $\N$ the set of positive integers, and we put $\N_0 =
\N \cup \{0\}$. For every $n \in \mathbb N$, we denote by $C_n$ a
cyclic group with $n$ elements. For real numbers $a, b \in \R$, we
set $[a, b] = \{ x \in \Z \mid a \le x \le b \}$. Let $L, L' \subset
\Z$.  We denote by $L+L' = \{a+b \mid a \in L,\, b \in L' \}$ their
{\it sumset}.
Two distinct elements $k, l \in L$ are called {\it adjacent} if $L \cap [\min\{k,l\},\max\{k,l\}]=\{k,l\}$.
A positive integer $d \in \N$ is called a \ {\it distance} \ of $L$ \ if there exist adjacent elements $k,l \in L$ with $d=|k-l|$.  We denote by \ $\Delta (L)$ \ the {\it set of distances} of $L$.
If $\emptyset \ne L \subset \N$, we call
\[
\rho (L) = \sup \Bigl\{ \frac{m}{n} \; \Bigm| \; m, n \in L \Bigr\}
= \frac{\sup L}{\min L} \, \in \Q_{\ge 1} \cup \{ \infty \}
\]
the \ {\it elasticity} \ of $L$, and we set $\rho (\{0\}) = 1$. By a
{\it monoid}, we mean a commutative, cancellative semigroup with
unit element.

\medskip
\centerline{\it Throughout this paper, let $S$ be a monoid.}
\bigskip

We denote by $\mathcal A (S)$ the set of atoms (irreducible
elements) of $S$, by $S^{\times}$ the group of invertible elements,
by $S_{\red} = \{ a S^{\times} \mid a \in S \}$ the associated
reduced monoid of $S$, and by $\mathsf q (S)$ a quotient group of
$S$ with $S \subset \mathsf q (S)$. A submonoid $T \subset S$ is
said to be {\it saturated} if $T = S \cap \mathsf q (T)$ (equivalently, if $a, b \in T$ and $a$ divides $b$ in $S$, then $a$ divides $b$ in $T$). We say
that $S$ is {\it reduced} if $|S^{\times}| = 1$. If not stated
otherwise, we will use multiplicative notation.  Submonoids of
$(\mathbb Z^s, +)$, in particular numerical monoids, will of course
be written additively.

For a set $P$, we denote by $\mathcal F (P)$ the \ {\it free
$($abelian$)$ monoid} \ with basis $P$. Then every $a \in \mathcal F
(P)$ has a unique representation in the form
\[
a = \prod_{p \in P} p^{\mathsf v_p(a) } \quad \text{with} \quad
\mathsf v_p(a) \in \N_0 \ \text{ and } \ \mathsf v_p(a) = 0 \ \text{
for almost all } \ p \in P \,.
\]
We call $|a|= \sum_{p \in P}\mathsf v_p(a)$ the \emph{length} of
$a$ and $\supp (a) = \{p \in P \mid \mathsf v_p (a) > 0\} \subset P$ the {\it support} of $a$.

The free (abelian) monoid \ $\mathsf Z (S) = \mathcal F \bigl(
\mathcal A(S_\red)\bigr)$ \ is called the \ {\it factorization
monoid} \ of $S$,  the unique homomorphism
\[
\pi \colon \mathsf Z (S) \to S_{\red} \quad \text{satisfying} \quad
\pi (u) = u \quad \text{for each} \quad u \in \mathcal A(S_\red)
\]
is called the \ {\it factorization homomorphism} \ of $S$ and
\[
\sim_S \ = \{ (x,y) \in \mathsf Z (S) \times \mathsf Z (S) \mid \pi
(x) = \pi (y) \}
\]
the \ {\it monoid of relations} of $S$. Clearly, we have $\mathsf Z
(S) \cong (\mathbb N_0^{(\mathcal A (S_{\red})}, +)$, and   if $S$
is written additively, then $\mathsf Z (S)$ and $\sim_S$ will be
written additively too.

Let $\sigma \subset \ \sim_S$ be a subset. Then $\sigma^{-1} = \{(x,y)
\in \sigma \mid (y,x) \in \sigma \}$, and $\sigma$ is called a {\it
presentation} of $S$ if the congruence generated by $\sigma$ equals
$\sim_S$ (equivalently, if $(x,y) \in \mathsf Z (S) \times \mathsf Z
(S)$, then $(x,y)\in \ \sim_S$ if and only if there exist $z_0,\ldots,
z_k\in \mathsf Z (S)$ such that $x=z_0$,  $z_k=y$, and, for all $i
\in [1,k]$, $(z_{i-1},z_{i})=(x_{i-1}w_i,x_iw_i)$ with $w_i\in
\mathsf Z (S)$ and $(x_{i-1},x_i)\in \sigma\cup \sigma^{-1}$). A
presentation $\sigma$ is said to be
\begin{itemize}
\item {\it minimal} if no proper subset of $\sigma$ generates $\sim_S$ (see
      \cite[Chapter 9]{Ro-GS99} for characterizations of minimal
      presentations in our setting).

\smallskip
\item {\it generic} if $\sigma$ is minimal and for all $(x, y) \in \sigma$ we have $\supp (xy) = \mathcal A (S_{\red})$.
\end{itemize}
If $S$ has a generic presentation, then $S_{\red}$ is finitely generated and has no primes.

\smallskip
For a subset $S' \subset S$, we set $\mathsf Z (S') = \{ z \in
\mathsf Z (S) \mid \pi (z) \in S'\}$. Let $Z \subset \mathsf Z (S)$ be a subset. We say that an element $x \in
Z$ is {\it minimal} in $Z$ if for all
elements $y \in Z$ with $y \t x$ it follows that $x =
y$. We denote by $\text{\rm Min} \bigl( Z \bigr)$ the
{\it set of minimal elements} in $Z$. Let $x \in
Z$. Since the number of elements $y \in Z$
with $y \t x$ is finite, there exists an $x^* \in \text{\rm Min}
\bigl( Z \bigr)$ with $x^* \t x$.

For $a \in S$, the set
\[
\begin{aligned}
\mathsf Z (a)  & = \mathsf Z ( \{a\}) \subset \mathsf Z (S) \quad
\text{is the \ {\it set of factorizations} \ of \ $a$} \quad
\text{and}
\\
\mathsf L (a) & = \bigl\{ |z| \, \bigm| \, z \in \mathsf Z (a)
\bigr\} \subset \N_0 \quad \text{is the \ {\it set of lengths} \ of
$a$}  \,.
\end{aligned}
\]
By definition, we have \ $\mathsf Z(a) = \{1\}$ and $\mathsf L (a) =
\{0\}$ for all $a \in S^\times$. The monoid $S$ is called {\it
atomic}   if   $\mathsf Z(a) \ne \emptyset$ \ for all \ $a \in S$
(equivalently, every non-unit can be written as a product of atoms),
and it is called {\it factorial} if $|\mathsf Z(a)| = 1$  for all $a
\in S$. If $S$ is reduced and atomic, then the set of atoms
$\mathcal A (S)$ is the uniquely determined minimal generating set
of $S$ (\cite[Proposition 1.1.7]{Ge-HK06a}). We denote by \(
\mathcal{L}(S) = \{\mathsf{L} (a) \mid a \in S \} \) the {\it system
of sets of lengths} of $S$, and by
\[
\Delta (S) \ = \ \bigcup_{L \in \mathcal L (S) } \Delta ( L ) \ \subset \N
\]
the {\it set of distances} of $S$.

For $z,\, z' \in \mathsf Z (S)$, we can write
\[
z = u_1 \cdot \ldots \cdot u_lv_1 \cdot \ldots \cdot v_m \quad
\text{and} \quad z' = u_1 \cdot \ldots \cdot u_lw_1 \cdot \ldots
\cdot w_n\,,
\]
where  $l,\,m,\, n\in \N_0$ and $u_1, \ldots, u_l,\,v_1,
\ldots,v_m,\, w_1, \ldots, w_n \in \mathcal A(S_\red)$ are such that
\[
\{v_1 ,\ldots, v_m \} \cap \{w_1, \ldots, w_n \} = \emptyset\,.
\]
Then $\gcd(z,z')=u_1\cdot\ldots\cdot u_l$, and we call
\[
\mathsf d (z, z') = \max \{m,\, n\} = \max \{ |z \gcd (z, z')^{-1}|,
|z' \gcd (z, z')^{-1}| \} \in \N_0
\]
the {\it distance} between $z$ and $z'$. For subsets $X, Y \subset
\mathsf Z (S)$, we set
\[
\mathsf d (X, Y) = \min \{\mathsf d (x, y) \mid x \in X, y \in Y \}
\in \mathbb N_0 \,,
\]
and thus   $\mathsf d (X, Y) = 0$ if and only if ( $X  \cap Y \ne
\emptyset$ or $X= \emptyset$ or $Y = \emptyset$ ).

\medskip
\noindent {\bf Numerical Monoids.} By a {\it numerical monoid} we
mean a submonoid $S \subset (\mathbb N_0, +)$ such that the
complement $\mathbb N \setminus S$ is finite. The theory of
numerical monoids is presented in  the recent monograph
\cite{Ro-GS09}. The connection to semigroup algebras and to
one-dimensional local domains (in particular, to power series
domains $K[ \negthinspace [ S ] \negthinspace ]$) is documented in
the surveys \cite{Ba-Do-Fo97, Ba06b, Ba09b} (all these are domains
which have received a lot of attention in factorization theory). We
shall make use of this in Section \ref{5}.

Let $S$ be a numerical monoid. Then $S$ is reduced and finitely
generated. Suppose that $\mathcal A (S) = \{n_1, \ldots, n_t\}$ with
$t \in \mathbb N$ and $1 < n_1 < \ldots < n_t$. Then we write $S =
\langle n_1, \ldots, n_t \rangle$, and since $\mathbb N \setminus S$
is finite, it follows that $\gcd (n_1, \ldots, n_t) = 1$. Writing
factorizations of an element $a \in S$ we put the atoms in boldface
in order to distinguish between the atoms and the scalars. Thus $z =
k_1 \boldsymbol{n_1} + \ldots + k_t \boldsymbol{n_t}$, with $k_1,
\ldots , k_t \in \mathbb N_0$, is the factorization of the element
$a = \sum_{i=1}^t k_i n_i \in S$ of length $|z| = k_1 + \ldots +
k_t$. We denote by $\text{\rm Ap}(S,a) = \{s \in S \mid s - a \notin
S\}$ the {\it Ap\'ery set} of $a$ in $S$ (see \cite{Ro-GS09}).

\medskip
\noindent
{\bf Krull monoids.}   The monoid $S$ is called a {\it
Krull monoid} if it satisfies one of the following equivalent
properties (\cite[Theorem 2.4.8]{Ge-HK06a}){\rm \,:}
\begin{itemize}
\item[(a)] $S$ is $v$-noetherian and completely integrally closed,

\item[(b)] $S$ has a divisor theory,

\item[(c)] $S_{\red}$ is a saturated submonoid of a free monoid.
\end{itemize}
The theory of Krull monoids is presented in the monographs
\cite{HK98, Gr01, Ge-HK06a}. Let $S$ be atomic. Clearly, $\sim_S \ \subset \mathsf Z (S) \times \mathsf Z (S)$ is saturated
and hence $\sim_S$ is a Krull monoid by Property (c)   (more on that can be found in \cite[Lemma
11]{Ph11a}), and hence, in particular, it is atomic. Moreover, if $S_{\red}$ is finitely generated, then $\sim_S$ is finitely generated as a saturated submonoid of a
finitely generated monoid (see \cite[Proposition 2.7.5]{Ge-HK06a}).
An integral
domain $R$ is a Krull domain if and only if its multiplicative
monoid $R \setminus \{0\}$ is a Krull monoid, and thus Property (a)
shows that a noetherian domain is Krull if and only if it is
integrally closed.

Main portions of the arithmetic of a Krull monoid---in particular,
all questions dealing with sets of lengths---can be studied in the
associated  monoid of zero-sum sequences over  its class group. To
provide this concept, let $G$ be an additive abelian group,  $G_0
\subset G$ \ a subset and $\mathcal F (G_0)$ the free monoid with
basis $G_0$. According to the tradition of combinatorial number
theory, the elements of $\mathcal F(G_0)$ are called \ {\it
sequences} over \ $G_0$. For a sequence
\[
U = g_1 \cdot \ldots \cdot g_l = \prod_{g \in G_0} g^{\mathsf v_g
(U)} \in \mathcal F (G_0) \,,
\]
we call $\mathsf v_g(U)$ the {\it multiplicity} of $g$ in $U$,
\[
|U|  = l = \sum_{g \in G} \mathsf v_g (U) \in \mathbb N_0 \
\text{the \ {\it length} \ of \ $U$} \,,  \quad \text{and} \quad
\sigma (U)  = \sum_{i = 1}^l g_i \ \text{the \ {\it sum} \ of \ $U$}
\,.
\]
The monoid
\[
\mathcal B(G_0) = \{ U \in \mathcal F(G_0) \mid \sigma (U) =0\}
\]
is called the \ {\it monoid of zero-sum sequences} \ over \ $G_0$,
and Property (c) shows that  $\mathcal B  (G_0)$ is a Krull monoid.
We define the \ {\it Davenport constant} \ of $G_0$ by
\[
\mathsf  D (G_0) = \sup \bigl\{ |U| \, \bigm| \; U \in \mathcal A
\big( \mathcal B (G_0) \big) \bigr\} \in \N_0 \cup \{\infty\} \,,
\]
which is a classical constant in Combinatorial Number Theory (see
\cite{Ga-Ge06b, Ge09a}).

We will use that for  a reduced finitely generated monoid $S$ the
following statements are equivalent (\cite[Theorem
2.7.14]{Ge-HK06a}){\rm \,:}
\begin{itemize}
\item $S$ is a Krull monoid,

\item $S$ is isomorphic to a monoid $\mathcal B (G_0)$ with $G_0
      \subset G$ as above,

\item $S$ is isomorphic to a monoid of non-negative integer
      solutions of a system of linear Diophantine equations.
\end{itemize}

\bigskip
\section{The $\omega$-invariants} \label{3}
\bigskip

\medskip
\begin{definition} \label{3.1}
Let  $S$  be  atomic. For $b \in S $, let \ $\omega (S, b)$ \ denote
the
      smallest \ $N \in \N_0 \cup \{\infty\} $ \ with the following
      property{\rm \,:}
      \begin{enumerate}
      \smallskip
      \item[] For all $n \in \N $ and $a_1, \ldots, a_n \in S $, if
              $b \t a_1 \cdot \ldots \cdot a_n $, then there exists a
              subset $\Omega \subset [1,n] $ such that $|\Omega | \le N $ and
              \[
              b \Bigm| \, \prod_{\nu \in \Omega} a_\nu \,.
              \]
      \end{enumerate}
      Furthermore, we set
      \[
      \omega (S) = \sup \{ \omega (S, u) \mid u \in
      \mathcal A (S) \} \in \N_0 \cup \{\infty\} \,.
      \]
\end{definition}

\smallskip
Let $S$ be atomic. By definition, an element $b \in S$ is a prime element if and only if $\omega (S, b) = 1$, and  $S$ is factorial if and only if $\omega (S) = 1$.
Thus these $\omega$-invariants (together with the associated tame degrees, see in particular Equation \ref{basic3}) measure in particular how far away atoms are from primes (see \cite{Ge-Ha-Le07,
Ge-Ha08a, Ge-Ha08b, Ge-Ka10a}).
An algorithm to compute the $\omega (S,
\cdot)$ values in numerical monoids was recently presented in
\cite{A-C-K-T11}. Here we start by showing that a slight variant of
the property in the definition of  $\omega (S, \cdot)$ does not
change its value.

\medskip
\begin{lemma} \label{3.2}
Let $S$ be  atomic   and $b \in S$. Then $\omega (S, b)$ is the
smallest  \ $N \in \N_0 \cup \{\infty\} $ \ with the following
property{\rm \,:}
      \begin{enumerate}
      \smallskip

      \item[] For all $n \in \N $ and $a_1, \ldots, a_n \in \mathcal A (S) $, if
              $b \t a_1 \cdot \ldots \cdot a_n $, then there exists a
              subset $\Omega \subset [1,n] $ such that $|\Omega | \le N $ and
              \[
              b \Bigm| \, \prod_{\nu \in \Omega} a_\nu \,.
              \]
      \end{enumerate}
\end{lemma}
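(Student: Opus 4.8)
The plan is to write $N'$ for the smallest element of $\N_0 \cup \{\infty\}$ having the atoms-only property stated in the lemma, and to prove $\omega(S,b) = N'$ by establishing both inequalities. One of them is essentially a tautology: the hypothesis $a_1, \ldots, a_n \in \mathcal A(S)$ in the lemma is more restrictive than the hypothesis $a_1, \ldots, a_n \in S$ in Definition \ref{3.1}, so the condition defining $N'$ is weaker than the one defining $\omega(S,b)$. Hence every $N \in \N_0 \cup \{\infty\}$ for which the property of Definition \ref{3.1} holds also makes the lemma's property hold, and therefore $N' \le \omega(S,b)$; this covers in particular the case $\omega(S,b) = \infty$.

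For the reverse inequality I would assume $N' < \infty$ (otherwise there is nothing to prove) and show that $N'$ itself satisfies the property of Definition \ref{3.1}. So let $n \in \N$ and $a_1, \ldots, a_n \in S$ with $b \mid a_1 \cdots a_n$. Since $S$ is atomic, each $a_i$ can be written as a product $a_i = u_{i,1} \cdots u_{i,k_i}$ with $k_i \in \N_0$ and $u_{i,1}, \ldots, u_{i,k_i} \in \mathcal A(S)$, where $k_i = 0$ exactly when $a_i \in S^{\times}$. Then $b$ divides the product $\prod_{i=1}^{n} \prod_{j=1}^{k_i} u_{i,j}$, which is a product of atoms, so by the defining property of $N'$ there is a set $\Omega'$ of pairs $(i,j)$ with $|\Omega'| \le N'$ and $b \mid \prod_{(i,j) \in \Omega'} u_{i,j}$.

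The crucial step is to collapse $\Omega'$ to the subset $\Omega = \{\, i \in [1,n] \mid (i,j) \in \Omega' \text{ for some } j \,\}$ of $[1,n]$. For each $i \in \Omega$ the partial product $\prod_{j\,:\,(i,j)\in\Omega'} u_{i,j}$ divides $a_i$, hence $\prod_{(i,j)\in\Omega'} u_{i,j}$ divides $\prod_{i\in\Omega} a_i$, so $b \mid \prod_{i\in\Omega} a_i$; and clearly $|\Omega| \le |\Omega'| \le N'$. Thus $N'$ witnesses the property of Definition \ref{3.1}, which gives $\omega(S,b) \le N'$, and together with the first inequality we conclude $\omega(S,b) = N'$. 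I do not expect a genuine obstacle here; the only point requiring a little care is the bookkeeping in this collapsing step --- that replacing the chosen atoms $u_{i,j}$ by the full factors $a_i$ can only enlarge the product (so that divisibility by $b$ is preserved), and that the number of distinct first coordinates occurring in $\Omega'$ is at most $|\Omega'|$.
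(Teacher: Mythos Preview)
Your proposal is correct and follows essentially the same argument as the paper: introduce the atoms-only variant, observe the trivial inequality $N'\le\omega(S,b)$, then factor each $a_i$ into atoms, apply the atoms-only property, and collapse the chosen atoms back to the original $a_i$'s to obtain $\omega(S,b)\le N'$. The only cosmetic difference is that the paper first strips off the unit factors (renumbering so that $a_1,\ldots,a_m\in S\setminus S^\times$) while you absorb this by allowing $k_i=0$; the paper also notes explicitly that $b\in S^\times$ gives $\omega(S,b)=N'=0$, which covers the edge case where the resulting product of atoms is empty.
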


\begin{proof}
Let $\omega' (S, b)$ denote the smallest integer \ $N \in \N_0 \cup
\{\infty\} $ satisfying the property mentioned in the lemma. We show
that $\omega (S, b) = \omega' (S, b)$. By definition, we have
$\omega' (S, b) \le \omega (S, b)$. (Note, if $b \in S^{\times}$,
then $\omega (S, b) = \omega' (S, b) = 0$).

In order to show that $\omega (S, b) \le \omega' (S, b)$, let $n \in
\N$ and $a_1, \ldots, a_n \in S$ with $b \t a_1 \cdot \ldots \cdot
a_n$. After renumbering if necessary there is an $m \in [0, n]$ such
that $a_1, \ldots, a_m \in S \setminus S^{\times}$ and $a_{m+1},
\ldots, a_n \in S^{\times}$. Then $b \t a_1 \cdot \ldots \cdot a_m$,
and for every $i \in [1, m]$ we pick a  factorization $a_i = u_{i,1}
\cdot \ldots \cdot u_{i, k_i}$ with $k_i \in \N$ and $u_{i_1},
\ldots , u_{i, k_i} \in \mathcal A (S)$. Then there is a subset $I
\in [1,m]$ and, for every $i \in I$, a subset $\emptyset \ne
\Lambda_i \subset [1, k_i]$ such that
\[
|I| \le \sum_{i \in I} |\Lambda_i| \le \omega' (S, b) \quad
\text{and} \quad b \t \prod_{i \in I} \prod_{\nu \in \Lambda_i}
u_{i, \nu}
\]
which implies that $b \t \prod_{i \in I} a_i$.
\end{proof}

\smallskip
The forthcoming characterization of $\omega (S)$ will be easy to prove. But it is useful from a computational point of view, as well as it will be a key ingredient
in the proof of Theorem \ref{5.5}.

\medskip
\begin{proposition} \label{3.3}
Let $S$ be  atomic.
\begin{enumerate}
\item For every  $s \in S$ we have
      \[
      \omega (S, s) = \sup \bigl\{ |x|  \mid  x \in \text{\rm Min} \bigl(
      \mathsf Z (sS) \bigr) \bigr\} \,.
      \]

\smallskip
\item $\omega (S) = \sup \{ |x| \mid        x \in \text{\rm Min} \bigl(
      \mathsf Z (uS) \bigr) \ \text{for some} \ u \in \mathcal A (S) \bigr\}$.
\end{enumerate}
\end{proposition}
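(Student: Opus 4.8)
The plan is to combine Lemma \ref{3.2} with the observation made in Section \ref{2} that every element $z$ of a subset $\mathsf Z(sS)$ is divisible (in $\mathsf Z(S)$) by some element of $\text{\rm Min}\bigl(\mathsf Z(sS)\bigr)$. First I would note that both sides of the asserted identity in (1) depend only on $S_{\red}$ and on $sS^{\times}$: indeed $\omega(S,s)=\omega(S_{\red},sS^{\times})$ since $b\t a_1\cdot\ldots\cdot a_n$ in $S$ is equivalent to $bS^{\times}\t(a_1S^{\times})\cdot\ldots\cdot(a_nS^{\times})$ in $S_{\red}$, and $\mathsf Z(sS)$ is the set of factorizations in $\mathsf Z(S)=\mathsf Z(S_{\red})$ of the elements of $S_{\red}$ that are multiples of $sS^{\times}$. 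Hence I may assume that $S$ is reduced, so that $\mathsf Z(S)=\mathcal F\bigl(\mathcal A(S)\bigr)$ is free on $\mathcal A(S)$. The key dictionary is then the following: for atoms $a_1,\ldots,a_n\in\mathcal A(S)$, the factorization $z=a_1\cdot\ldots\cdot a_n\in\mathsf Z(S)$ lies in $\mathsf Z(sS)$ if and only if $s\t a_1\cdot\ldots\cdot a_n$; and, since $\mathsf Z(S)$ is free, the divisors $y\t z$ in $\mathsf Z(S)$ are precisely the sub-factorizations $y=\prod_{\nu\in\Omega}a_\nu$ with $\Omega\subset[1,n]$, for which $|y|=|\Omega|$ and $\pi(y)=\prod_{\nu\in\Omega}a_\nu$. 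Thus ``there is $\Omega\subset[1,n]$ with $|\Omega|\le N$ and $s\t\prod_{\nu\in\Omega}a_\nu$'' --- the condition appearing in Lemma \ref{3.2} --- means exactly ``there is $y\in\mathsf Z(sS)$ with $y\t z$ and $|y|\le N$''.

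Put $M=\sup\{\,|x|\mid x\in\text{\rm Min}\bigl(\mathsf Z(sS)\bigr)\,\}$. To obtain $\omega(S,s)\le M$ I would verify the property of Lemma \ref{3.2} with $N=M$: given $a_1,\ldots,a_n\in\mathcal A(S)$ with $s\t a_1\cdot\ldots\cdot a_n$, set $z=a_1\cdot\ldots\cdot a_n\in\mathsf Z(sS)$; by the cited remark there is $z^{*}\in\text{\rm Min}\bigl(\mathsf Z(sS)\bigr)$ with $z^{*}\t z$, hence $z^{*}=\prod_{\nu\in\Omega}a_\nu$ for some $\Omega\subset[1,n]$ with $|\Omega|=|z^{*}|\le M$ and $s\t\pi(z^{*})=\prod_{\nu\in\Omega}a_\nu$. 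By Lemma \ref{3.2} this gives $\omega(S,s)\le M$.

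For the reverse inequality I would take an arbitrary $x\in\text{\rm Min}\bigl(\mathsf Z(sS)\bigr)$ and write $x=u_1\cdot\ldots\cdot u_n$ with $u_1,\ldots,u_n\in\mathcal A(S)$ and $n=|x|$. Then $s\t\pi(x)=u_1\cdot\ldots\cdot u_n$, so by Lemma \ref{3.2} there is $\Omega\subset[1,n]$ with $|\Omega|\le\omega(S,s)$ and $s\t\prod_{\nu\in\Omega}u_\nu$; putting $y=\prod_{\nu\in\Omega}u_\nu$ we have $y\t x$ and $y\in\mathsf Z(sS)$, so minimality of $x$ forces $y=x$ and hence $|x|=|y|=|\Omega|\le\omega(S,s)$. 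Taking the supremum over all such $x$ yields $M\le\omega(S,s)$, which together with the previous paragraph proves (1). Part (2) then follows immediately by taking suprema over $u\in\mathcal A(S)$ in (1) and using $\omega(S)=\sup\{\omega(S,u)\mid u\in\mathcal A(S)\}$, interchanging the two suprema.

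The whole argument is essentially a bookkeeping exercise, so I do not anticipate a genuine obstacle; the only points that need care are the reduction to the case $S$ reduced, the faithful translation between the index-set formulation of Definition \ref{3.1}/Lemma \ref{3.2} and divisibility in the free monoid $\mathsf Z(S)$, and the elementary free-monoid fact that every divisor of $a_1\cdot\ldots\cdot a_n$ has the form $\prod_{\nu\in\Omega}a_\nu$ for some $\Omega\subset[1,n]$. One should also keep track of the degenerate case $s\in S^{\times}$ (i.e.\ $s=1$ after reduction), where $\mathsf Z(sS)=\mathsf Z(S)$, $\text{\rm Min}\bigl(\mathsf Z(sS)\bigr)=\{1\}$, and both sides of (1) equal $0$.
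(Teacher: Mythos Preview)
Your proof is correct and follows essentially the same approach as the paper's: reduce to $S$ reduced, and for each inequality translate between the subset formulation of $\omega(S,s)$ and divisibility in the free monoid $\mathsf Z(S)$, using the existence of a minimal divisor in $\mathsf Z(sS)$ for one direction and the minimality hypothesis for the other. The only cosmetic difference is that you invoke Lemma~\ref{3.2} explicitly in both directions, whereas the paper argues directly from the definition; the underlying logic is the same.
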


\begin{proof}
Obviously, it is sufficient to prove the first statement.
Furthermore, we may assume that $S$ is reduced.

Let $x = \prod_{u \in \mathcal A
(S)} u^{m_u} \in \text{\rm Min} \bigl( \mathsf Z (sS) \bigr)$. Since
$x \in \mathsf Z (sS)$, it follows that $s$ divides $\prod_{u \in
\mathcal A (S)} u^{m_u}$ (in $S$), and since $x$ is minimal in
$\mathsf Z (sS)$, $s$ does not divide a proper subproduct.
Therefore, we get that $\omega (S, s) \ge \sum_{u \in \mathcal A
(S)} m_u = |x|$.

Conversely, let $(m_u)_{u \in \mathcal A (S)} \in \mathbb
N_0^{\mathcal A (S)}$ be such that $s$ divides $\prod_{u \in
\mathcal A (S)} u^{m_u}$ (in $S$). Then $x = \prod_{u \in \mathcal A
(S)} u^{m_u} \in \mathsf Z (sS)$, and there exists some minimal $x^*
= \prod_{u \in \mathcal A (S)} u^{m_u^*} \in \mathsf Z (sS)$ with
$x^* \t x$ (in $\mathsf Z (S)$). Then
\[
s \t \prod_{u \in \mathcal A (S)} u^{m_u^*} \t \prod_{u \in \mathcal
A (S)} u^{m_u} \quad \text{(in $S$)}
\]
and hence $\omega (S, s) \le \sum_{u \in \mathcal A (S)} m_u^* \le
\sup \bigl\{ |y| \mid y \in \text{\rm Min} \bigl( \mathsf Z (sS)
\bigr) \bigr\}$.
\end{proof}

\medskip
\begin{remarks} \label{3.4}~

\smallskip
1. Note that $\omega (S,s)$ is finite for all $s \in S$ not only for
finitely generated monoids, but more generally for all
$v$-noetherian monoids (see \cite[Theorem 4.2]{Ge-Ha08a}).

\smallskip
2. Let $k, r \in \mathbb N_0$ and $n, d_1, \ldots, d_k \in \mathbb
N$. Let $S \subset (\mathbb N_0^n, +)$ be the set of all non-negative
integer solutions $(x_1, \ldots, x_n) \in \mathbb N_0^n$ of the
following system of equations
\[
\begin{array}{ll}
a_{1,1}x_1+\cdots+a_{1,n}x_n & \equiv 0 \mod d_1,\\
 & \vdots \\
a_{k,1}x_1+\cdots+a_{k,n}x_n & \equiv 0 \mod d_k,\\
a_{k+1,1}x_1+\cdots+a_{k+1,n}x_n & = 0,\\
 & \vdots \\
a_{k+r,1}x_1+\cdots+a_{k+r,n}x_n & = 0,\\
\end{array}
\]
where all $a_{i,j}$ are integers. Obviously, $S \subset (\mathbb
N_0^n, +)$ is a submonoid with
\[
\mathsf q (S) \cap \mathbb N_0^n = S \tag{$*$} \,.
\]
Let $G = \mathbb Z/ d_1 \mathbb Z \times \ldots \times \mathbb Z/
d_k \mathbb Z \times \mathbb Z^r$ and
\[
G_0 = \{ (a_{1,i}+d_1\Z, \ldots, a_{k,i}+d_k\Z, a_{k+1, i}, \ldots,
, a_{k+r,i}) \in G \mid i \in [1, n] \} \,.
\]
Then $S$ is obviously isomorphic to $\mathcal B (G_0)$, the monoid
of zero-sum sequences over $G_0$. This (or independently, the fact
that $S \subset (\N_0^n, +)$ is saturated) show that $S$ is a
reduced, finitely generated Krull monoid.  If a finitely generated
Krull monoid is given in that form, then the characterization of
Proposition \ref{3.3} turns out to be extremely useful, as the next
corollary illustrates.
\end{remarks}

\medskip
\begin{corollary} \label{3.5}
Let $S \subset (\mathbb N_0^n, +)$ be a saturated submonoid with
$\mathcal A (S) = \{ \boldsymbol{s_1}, \ldots, \boldsymbol{s_t}\}$,
where $n, t \in \mathbb N$, and let  $A \in M_{n,t} (\Z)$ be the
matrix whose columns are $s_1,\ldots,s_t$. If $s\in S$ and $(x_1,
\ldots, x_t) \in \N_0^t$, then
\[
x_1 \boldsymbol{s_1} + \ldots  + x_t \boldsymbol{s_t} \in \mathsf
Z(s+S) \quad \text{ if and only if} \quad   A \left( \begin{matrix}x_1 \\
\vdots \\  x_t \end{matrix} \right) \geq s \,.
\]
\end{corollary}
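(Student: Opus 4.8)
The plan is to unwind both sides of the claimed equivalence directly from the definitions, using that $S$ is a saturated submonoid of $(\N_0^n,+)$. First I would observe that, since $S$ is reduced (being a submonoid of $\N_0^n$), the factorization homomorphism sends $x_1\boldsymbol{s_1}+\dots+x_t\boldsymbol{s_t}$ to $\sum_{i=1}^t x_i s_i$, which in matrix notation is exactly $A(x_1,\dots,x_t)^{\mathsf t}\in\N_0^n$. By the definition of $\mathsf Z(s+S)$ recalled in the preliminaries, the factorization $x_1\boldsymbol{s_1}+\dots+x_t\boldsymbol{s_t}$ lies in $\mathsf Z(s+S)$ precisely when $\pi\bigl(x_1\boldsymbol{s_1}+\dots+x_t\boldsymbol{s_t}\bigr)\in s+S$, that is, when $A(x_1,\dots,x_t)^{\mathsf t}-s\in S$.

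The crux is therefore to show that $A(x_1,\dots,x_t)^{\mathsf t}-s\in S$ if and only if $A(x_1,\dots,x_t)^{\mathsf t}\ge s$ (coordinatewise). The direction ``$\Rightarrow$'' is immediate: any element of $S$ lies in $\N_0^n$, so $A(x_1,\dots,x_t)^{\mathsf t}-s\in S\subset\N_0^n$ gives $A(x_1,\dots,x_t)^{\mathsf t}-s\ge 0$. For ``$\Leftarrow$'', I would use saturation in the form $S=S\cap\mathsf q(S)$, equivalently $\mathsf q(S)\cap\N_0^n\subset S$ — wait, that inclusion is not automatic for an arbitrary saturated submonoid of $\N_0^n$, so this is the step that needs care. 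The correct argument: set $y=A(x_1,\dots,x_t)^{\mathsf t}$ and $w=x_1\boldsymbol{s_1}+\dots+x_t\boldsymbol{s_t}\in S$, so $y=\pi(w)$ with $w\in\mathsf Z(s+S)$ iff $s$ divides $y$ in $S$. Now $s\in S$ and $y\in S$ with $y-s\ge 0$ means $s$ divides $y$ in $(\N_0^n,+)$, i.e.\ $s$ divides $y$ in the ambient free-type monoid; since $S$ is saturated, $s\mid y$ in $S$ as well, which is exactly $y-s\in S$. This is where saturation is indispensable, and it is the one place where I expect the argument could be gotten subtly wrong if one forgets that membership of the \emph{difference} in $S$ (not merely in $\N_0^n$) is what is required; the saturation hypothesis is precisely what bridges that gap, via the ``$a,b\in T$ and $a$ divides $b$ in $S$ implies $a$ divides $b$ in $T$'' reformulation quoted in Section~\ref{2}.

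Assembling: the chain of equivalences reads $x_1\boldsymbol{s_1}+\dots+x_t\boldsymbol{s_t}\in\mathsf Z(s+S)\iff s$ divides $\sum_i x_is_i$ in $S\iff\sum_i x_is_i-s\in S\iff A(x_1,\dots,x_t)^{\mathsf t}\ge s$, the last step being the easy coordinatewise comparison together with the saturation argument above. The main obstacle, as indicated, is not computational at all but conceptual: making sure the reduction from ``divides in $S$'' to ``dominates coordinatewise in $\N_0^n$'' is justified, and that requires invoking the saturation of $S$ in $(\N_0^n,+)$ rather than any finer structure. Everything else is a routine matching of definitions, so the write-up should be short.
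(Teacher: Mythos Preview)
Your proposal is correct and follows essentially the same route as the paper's proof: both directions unwind the definition of $\mathsf Z(s+S)$ and use saturation for the nontrivial implication. The only cosmetic difference is that the paper invokes saturation in the form $\mathsf q(S)\cap\N_0^n=S$ (so $s'-s\in\mathsf q(S)\cap\N_0^n=S$ directly), whereas you phrase it via the equivalent divisibility reformulation; your momentary worry about that inclusion was unfounded, since it is exactly the definition of $S\subset\N_0^n$ being saturated.
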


\begin{proof}
Observe that, by definition of  $A$, we have for $s$ and $(x_1,
\ldots, x_t)$ as above, that
\[
x_1 \boldsymbol{s_1} + \ldots  + x_t \boldsymbol{s_t} \in \mathsf
Z(s) \quad \text{ if and only if} \quad   A \left( \begin{matrix}x_1 \\
\vdots \\  x_t \end{matrix} \right) = s \,.
\]
If  $x_1 \boldsymbol{s_1} + \ldots  + x_t \boldsymbol{s_t}   \in
\mathsf Z(s+S)$, then there is  some $s'\in S$ such that $x_1
\boldsymbol{s_1} + \ldots  + x_t \boldsymbol{s_t} = s+s' \in S$, and
hence $A (x_1,\ldots,x_t)^{t} = s+s' \ge s$.

Conversely, let $x \in \mathbb N_0^t$ (considered as a column) be
such that $A x \ge s$. Then $s' =  A x \in S$ and $s'-s \in \mathsf
q (S) \cap \mathbb N_0^n = S$. Thus $s'\in s+S$ and  $x_1
\boldsymbol{s_1} + \ldots  + x_t \boldsymbol{s_t} \in \mathsf Z (s')
\subset \mathsf Z(s+S)$.
\end{proof}

\medskip
The following Example \ref{3.6} illustrates how Proposition \ref{3.3} and
Corollary \ref{3.5} can be used to calculate the
$\omega$-invariants.
It was performed by using an algorithm due to E. Contejean and H. Devie
with slack variables (see \cite{Co-De94}; these authors published later a paper to avoid the
use of these extra variables).

\medskip
\begin{example} \label{3.6}
Let $S \subset (\mathbb N_0^3, +)$ be the set of non-negative integer
solutions  of
\[
\begin{matrix}
x + z =0 \mod 2,\\
y + z=0 \mod 2 \,.
\end{matrix}
\]
Then
\[
\mathcal A (S) = \left\{ \left( \begin{matrix}2 \\ 0 \\ 0
\end{matrix} \right),
\left( \begin{matrix}0 \\ 2 \\ 0 \end{matrix} \right), \left(
\begin{matrix}0 \\ 0 \\ 2 \end{matrix} \right), \left(
\begin{matrix}1 \\ 1 \\ 1 \end{matrix} \right) \right\} \,,
\]
and $S$ is isomorphic to $\mathcal B (G_0)$ with $G_0 = (\Z/2\Z
\times \Z/2\Z)  \setminus \{(0, 0) \}$. The set of solutions of
\[
\begin{pmatrix}
2 & 0 & 0 & 1 \\
0 & 2 & 0 & 1 \\
0 & 0 & 2 & 1
\end{pmatrix}
x \geq
\begin{pmatrix}
1 \\
1 \\
1
\end{pmatrix}
\]
is $\{ (0,0,0,1), (1,1,1,0)\}+ \N_0^4$, and thus, by Proposition
\ref{3.3} and Corollary \ref{3.5}, $\omega \bigl(S,(1,1,1)
\bigr)=3$. The set of solutions of
\[
\begin{pmatrix}
2 & 0 & 0 & 1 \\
0 & 2 & 0 & 1 \\
0 & 0 & 2 & 1
\end{pmatrix}
x \geq
\begin{pmatrix}
2 \\
0 \\
0
\end{pmatrix}
\]
is $\{(1,0,0,0),(0,0,0,2)\} + \N_0^4$, whence $\omega
\bigl(S,(2,0,0) \bigr)=2$. By symmetry, we get $\omega
\bigl(S,(0,2,0) \bigr)=2=\omega \big(S,(0,0,2) \big)$. Thus it follows that
$\omega(S)=3$.
\end{example}

\bigskip
\section{The catenary and monotone catenary degrees} \label{4}
\bigskip

\medskip
\begin{definition} \label{4.1}
Let $S$ be  atomic  and $a \in S$.
\begin{enumerate}
\item Let $z,\, z' \in \mathsf Z (a)$ be factorizations of $a$ and $N \in
      \N_{0} \cup \{ \infty\}$. A finite sequence $z_0,\, z_1, \ldots,
      z_k$ in \ $\mathsf Z(a)$ is called an \ $N$-{\it chain of
      factorizations} \ from $z$ to $z'$ \ if $z = z_0$, \ $z' = z_k$ and
      \ $\mathsf d (z_{i - 1}, z_i) \le N$ for every $i \in [1, k]$.
      In addition, the chain is called {\it monotone} if $|z_0| \le
      \ldots \le |z_k|$ or $|z_0| \ge \ldots \ge |z_k|$.

      \smallskip
      \noindent
      If there exists a (monotone) $N$-chain of factorizations from $z$ to $z'$, we
      say that $z$ and $z'$ \ can be \ {\it concatenated} \ by a
      (monotone)
      $N$-chain.

\smallskip
\item We denote by  $\mathsf c (a) \in \N _0 \cup
      \{\infty\}$ (or by $\mathsf c_{\mon} (a)$ resp.)  the smallest $N \in \N _0 \cup \{\infty\}$ such
      that any two factorizations $z,\, z' \in \mathsf Z (a)$ can be
      concatenated by an $N$-chain (or by a monotone $N$-chain).

\smallskip
\item Moreover,
\[
\mathsf c(S) = \sup \{ \mathsf c(b) \mid b \in S\} \in \N_0 \cup
\{\infty\} \quad \text{and} \quad \mathsf c_{\mon} (S) = \sup \{
\mathsf c_{\mon} (b) \mid b \in S\} \in \N_0 \cup \{\infty\} \quad
\,
\]
denote  the \ {\it catenary degree} \ and the \ {\it monotone
catenary degree} of $S$.
\end{enumerate}
\end{definition}

\medskip
Whereas the catenary degree is a classic invariant in factorization
theory, the  monotone catenary degree was introduced only in
\cite{Fo06a}. However, since then  the existence of monotone and of
near monotone chains of factorizations have been investigated in
various aspects (see \cite{Fo-Ge05, Fo-Ha06b, Ge-Gr-Sc-Sc11}). The monotone
catenary degree is always (explicitly or implicitly) studied in a two-step procedure.

\medskip
\begin{definition} \label{4.2}
Let $S$ be  atomic  and $a \in S$.
\begin{enumerate}
\item For $k \in \mathbb Z$, let $\mathsf Z_k (a) = \{ z \in
      \mathsf Z (a) \mid |z| = k\}$ denote the set of factorizations of
      $a$ having length $k$. We define
      \[
      \mathsf c_{\adj}(a)  = \sup \{ \mathsf d \big( \mathsf Z_k (a),
      \mathsf Z_l (a) \big) \mid k, l \in \mathsf L (a) \ \text{are
      adjacent} \} \,
      \]
      and we set
      \[
      \mathsf c_{\adj} (S) = \sup \{ \mathsf c_{\adj} (b) \mid b \in S \} \in \mathbb N_0 \cup
      \{\infty\} \,.
      \]

\smallskip
\item Let $\mathsf c_{\equal} (a)$ denote the smallest $N \in \mathbb N_0
      \cup \{\infty\}$ with the following property:

      \smallskip
      \begin{itemize}
      \item[]For all $z, z' \in \mathsf Z (a)$ with $|z| = |z'|$ there
      exists a monotone $N$-chain concatenating $z$ and $z'$.
      \end{itemize}

      \smallskip
      \noindent
      We call
      \[
      \mathsf c_{\equal} (S) = \sup \{ \mathsf c_{\equal} (b) \mid b \in S \} \in \mathbb N_0 \cup \{\infty\}
      \]
      the {\it equal catenary degree} of $S$.
\end{enumerate}
\end{definition}

\smallskip
\noindent Obviously,  we have
\[
\mathsf c (a) \le \mathsf c_{\mon} (a) = \sup \{ \mathsf c_{\equal}
(a), \mathsf c_{\adj} (a) \} \le \sup \mathsf L (a) \quad \text{for
all} \quad a \in S \,,
\]
and hence
\begin{equation}
\mathsf c (S) \le \mathsf c_{\mon} (S) = \sup \{ \mathsf c_{\equal}
(S), \mathsf c_{\adj} (S) \} \,. \label{basic2}
\end{equation}
It is well-known that the monotone catenary degree $\mathsf c_{\mon}
(S)$ is finite for finitely generated monoids (\cite[Theorem 3.9]{Fo06a}), and hence also for Krull monoids with finite class group, because it is
stable under transfer homomorphisms. Our results will provide a more natural upper bound
for $\mathsf c_{\mon}
(S)$, valid among others finitely generated monoids.
Inequality \ref{basic} will show that there is a canonical chain of inequalities involving the set of distances, the $\omega$-invariants, and the catenary and tame degrees. However, there seems to be  no obvious relationship  between $\mathsf
c_{\mon}(\cdot)$, on the one side and  $\omega (\cdot)$, the tame degree or on their
canonical upper bound (see Proposition \ref{5.2}) on the other side.
We  study these phenomena by  investigating
$\mathsf c_{\equal}(\cdot)$, and  $\mathsf c_{\adj}
(\cdot)$ individually, and summarize our  discussion after Proposition \ref{5.2}.

\medskip
\begin{definition} \label{4.3}
Let $S$ be  atomic. Then
\[
\sim_{S, \equal} = \big\{ (x,y) \in \mathsf Z (S) \times \mathsf Z (S)
\mid \pi (x) = \pi (y) \ \text{and} \ |x| = |y| \big\}
\]
is called the {\it monoid of equal-length relations} of $S$.
\end{definition}

\medskip
\begin{proposition} \label{4.4}
Let $S$ be  atomic.
\begin{enumerate}
\item $\sim_{S, \equal} \ \subset \ \sim_S$ is a saturated submonoid, and hence $\sim_{S, \equal}$ is a Krull monoid.

\smallskip
\item If $S_{\red}$ is finitely generated, then $\sim_{S, \equal}$ is finitely generated.

\smallskip
\item $\mathsf c_{\equal} (S) \le \sup \{ |x| \mid (x,y) \in \mathcal A (\sim_{S, \equal}) \ \text{for some} \ y \in \mathsf Z (S) \}$.

\smallskip
\item For $d \in \Delta (S)$ let $A_d = \big\{ x \in \mathsf Z (S) \big| d+|x| \in \mathsf L ( \pi (x) \big) \big\}$.  Then \newline
      $\mathsf c_{\adj} (S) \le \sup \{ d+|x| \mid x \in \text{\rm Min} (A_d), \,  d \in \Delta (S) \}$.
\end{enumerate}
\end{proposition}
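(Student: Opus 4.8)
The plan is to prove the four statements of Proposition~\ref{4.4} in order, using the structural results on $\sim_S$ recalled in Section~\ref{2} together with the characterization of $\omega$-type invariants via minimal elements of sets of factorizations. We may assume throughout that $S$ is reduced, so that $\mathsf Z(S) = \mathcal F(\mathcal A(S))$ and $\pi$ is the factorization homomorphism.

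\smallskip
\textbf{(1)} The inclusion $\sim_{S,\equal}\ \subset\ \sim_S$ is clearly a submonoid inclusion, since $\pi(xx') = \pi(x)\pi(x')$ and $|xx'| = |x| + |x'|$ respect both defining conditions. To see it is saturated, suppose $(x,y),(x',y')\in\ \sim_S$ with $(x,y) = (x'u, y'u)$ for some $u\in\ \sim_S$, i.e.\ $(x,y)$ divides $(x',y')$ in $\sim_S$ (wait --- more carefully, one takes $(x,y)\in\ \sim_{S,\equal}$, $(x',y')\in\ \sim_{S,\equal}$, and $w\in\ \sim_S$ with $(x,y)(w_1,w_2) = (x',y')$); then $|x| + |w_1| = |x'| = |y'| = |y| + |w_2|$ and $|x| = |y|$, forcing $|w_1| = |w_2|$, so $w\in\ \sim_{S,\equal}$. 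Hence $\sim_{S,\equal}$ is saturated in $\sim_S$, which is itself saturated in $\mathsf Z(S)\times\mathsf Z(S)$; a saturated submonoid of a saturated submonoid of a free monoid is saturated in the free monoid, so $\sim_{S,\equal}$ is a Krull monoid by Property~(c) of the Krull-monoid characterization.

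\smallskip
\textbf{(2)} If $S_{\red}$ is finitely generated, then $\sim_S$ is finitely generated (as a saturated submonoid of a finitely generated monoid, cited in Section~\ref{2}), and $\sim_{S,\equal}$ is a saturated submonoid of the finitely generated monoid $\sim_S$, hence finitely generated by the same cited result \cite[Proposition 2.7.5]{Ge-HK06a}.

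\smallskip
\textbf{(3)} Let $a\in S$ and $z, z'\in\mathsf Z(a)$ with $|z| = |z'|$. I want a monotone (here: constant-length, hence trivially monotone) $N$-chain from $z$ to $z'$ with $N$ bounded by the stated supremum. Since $(z,z')\in\ \sim_{S,\equal}$, write it as a sum of atoms of $\sim_{S,\equal}$: $(z,z') = \sum_{i=1}^{k}(x_i,y_i)$ with each $(x_i,y_i)\in\mathcal A(\sim_{S,\equal})$. Set $w_j = \bigl(\prod_{i>j} x_i\bigr)\bigl(\prod_{i\le j} y_i\bigr)$ for $j = 0,\dots,k$, so $w_0 = z$ and $w_k = z'$; each $w_j\in\mathsf Z(a)$ (as $\pi x_i = \pi y_i$) and $|w_j| = |z|$ (as $|x_i| = |y_i|$). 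Consecutive terms $w_{j-1}, w_j$ differ by replacing the block $x_j$ by $y_j$, so $\mathsf d(w_{j-1}, w_j)\le\max\{|x_j|, |y_j|\} = |x_j|\le\sup\{|x|\mid (x,y)\in\mathcal A(\sim_{S,\equal})\}$. This yields the required monotone chain, proving $\mathsf c_{\equal}(a)$, and hence $\mathsf c_{\equal}(S)$, is bounded as claimed.

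\smallskip
\textbf{(4)} This is the part I expect to be the main obstacle, because it mixes length-jumps with the minimal-element machinery. Fix $d\in\Delta(S)$, $a\in S$, and adjacent lengths $k < l$ in $\mathsf L(a)$ with $l - k = d$; I must bound $\mathsf d(\mathsf Z_k(a), \mathsf Z_l(a))$. Pick any $z'\in\mathsf Z_l(a)$, the \emph{longer} factorization; then $z'\in A_d$ since $\pi(z') = a$ has a factorization of length $k = l - d = |z'| - d$, i.e.\ $d + (|z'| - d)\in\mathsf L(\pi(z'))$ --- hmm, I need $d + |x|\in\mathsf L(\pi x)$, so actually the shorter factorization should be the element of $A_d$: take $z\in\mathsf Z_k(a)$, then $d + |z| = d + k = l\in\mathsf L(a) = \mathsf L(\pi z)$, so $z\in A_d$. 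Choose $z^*\in\text{\rm Min}(A_d)$ with $z^*\t z$ in $\mathsf Z(S)$, say $z = z^* z^{**}$. Then $\pi(z^*)$ has a factorization $\tilde z$ of length $|z^*| + d$; set $z'' = \tilde z\, z^{**}\in\mathsf Z(\pi(z^* )\pi(z^{**})) = \mathsf Z(a)$, of length $|\tilde z| + |z^{**}| = |z^*| + d + |z^{**}| = k + d = l$, so $z''\in\mathsf Z_l(a)$. Finally $\mathsf d(z, z'') = \mathsf d(z^* z^{**}, \tilde z z^{**})\le\mathsf d(z^*,\tilde z)\le\max\{|z^*|, |\tilde z|\} = |z^*| + d\le\sup\{d + |x|\mid x\in\text{\rm Min}(A_d),\ d\in\Delta(S)\}$. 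Since $z\in\mathsf Z_k(a)$ was arbitrary this bounds $\mathsf d(\mathsf Z_k(a),\mathsf Z_l(a))$, and taking suprema over adjacent $k,l$, over $a\in S$, and over $d\in\Delta(S)$ gives the claim. The delicate point to get right is the bookkeeping of which of the two adjacent lengths plays which role, and the observation that $A_d$ is stable under the divisibility used to extract a minimal element --- which it is, because if $x\t x''$ with $d + |x''|\in\mathsf L(\pi x'')$, we do \emph{not} in general get $d + |x|\in\mathsf L(\pi x)$, so one must instead argue directly as above: start from a genuine length-$k$ factorization, which \emph{is} in $A_d$, replace its minimal-$A_d$ divisor's image by a length-shifted factorization, and leave the cofactor untouched.
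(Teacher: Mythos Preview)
Your proof is correct and follows essentially the same approach as the paper's in all four parts: the same saturation argument in (1), the same finitely-generated-saturated-submonoid citation in (2), the same atom-decomposition chain in (3), and the same ``pick $z\in\mathsf Z_k(a)$, it lies in $A_d$, pass to a minimal $A_d$-divisor and swap in a longer factorization of that divisor'' argument in (4). The only differences are cosmetic---your write-up contains some mid-stream self-corrections and extra commentary (e.g.\ the aside about $A_d$ not being downward closed, and the unnecessary ``$z$ was arbitrary'' in (4), since $\mathsf d(X,Y)$ is a minimum and one pair suffices)---but the mathematics is the same.
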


\begin{proof}
1. Obviously, $\sim_{S, \equal}$ is a submonoid of $\sim_S$. In
order to show that it is saturated, let $(x_1, x_2), (z_1, z_2)$ $
\in  \ \sim_{S, \equal}$ be such that $(x_1, x_2)$ divides  $(z_1,
z_2)$ in $\sim_S$. Then there exists $(y_1, y_2) \in \ \sim_S$ such
that $x_1y_1 = z_1$ and $x_2y_2 = z_2$. This implies that $|y_1| =
|z_1| - |x_1| = |z_2| - |x_2| = |y_2|$, and hence $(y_1, y_2) \in \
\sim_{S, \equal}$. Thus $\sim_{S, \equal} \ \subset \ \sim_S$ is a
saturated. Since $\sim_S$ is a Krull monoid by \cite[Lemma
11]{Ph11a}, $\sim_{S, \equal}$ is a Krull monoid by
\cite[Proposition 2.4.4]{Ge-HK06a}.

\smallskip
2. Let $S_{\red}$ be finitely generated. Then $\sim_S$ is finitely
generated (as observed in Section \ref{2}), and hence $\sim_{S, \equal}$ is finitely generated as a
saturated submonoid of a finitely generated monoid
(\cite[Proposition 2.7.5]{Ge-HK06a}).

\smallskip
3. We set $M = \sup \{ |x| \mid (x,y) \in \mathcal A (\sim_{S, \equal}) \ \text{for some} \ y \in \mathsf Z (S)\}$, and have to show that $\mathsf c_{\equal} (a) \le M$
for all $a \in S$. Let $a \in S$ and $z, z' \in \mathsf Z (a)$ with $|z| = |z'|$. Then $(z, z') \in \ \sim_{S, \equal}$, and we consider
a factorization, say
\[
(z, z') = (x_1, x_1') \cdot \ldots \cdot (x_k, x_k') \quad \text{where} \quad (x_i, x_i') \in \mathcal A (\sim_{S, \equal}) \ \text{for all} \ i \in [1, k] \,.
\]
Then
\[
z = z_0, z_1 = x_1'x_2 \cdot \ldots \cdot x_k, \ldots, z_i = x_1' \cdot \ldots \cdot x_i' x_{i+1} \cdot \ldots \cdot x_k, \ldots,
z_k = x_1' \cdot \ldots \cdot x_k' = z'
\]
is an $M$-chain of factorizations from $z$ to $z'$ and with $|z_i| = |z|$ for all $i \in [0, k]$.

\smallskip
4. Let $a \in S$ and $k, l \in \mathsf L (a)$ be adjacent lengths, say $l-k=d \in \Delta (S)$. We pick some $z \in \mathsf Z_k (a)$. Then there exists some
$x \in \text{\rm Min} (A_d)$ such that $x \t z$. If $x' \in \mathsf Z \big( \pi (x) \big)$ with $|x'| - |x| = d$, then $z' = x'(x^{-1}z) \in \mathsf Z_l (a)$ and
$\mathsf d \big(\mathsf Z_k (a), \mathsf Z_l (a) \big) \le \mathsf d (z, z') \le |x|+d$. This shows that $\mathsf c_{\adj} (a) \le \sup \{ d+|x| \mid x \in \text{\rm Min} (A_d), d \in \Delta (S) \}$, and hence the assertion follows.
\end{proof}

\smallskip
\noindent
Proposition \ref{4.4}.2 will allow us to obtain a more explicit finiteness criterion for $\mathsf c_{\adj} (S)$ in Proposition \ref{5.2}.

\medskip
\begin{definition} \label{4.5}
Let $S$ be  atomic.
\begin{enumerate}
\item Two elements $z, z' \in \mathsf Z (S)$ are $\mathcal R$-{\it related} if $z = z' = 1$ or if $z$ and $z'$ can be concatenated by a chain of factorizations
      $z = z_0, \ldots, z_k = z'$ such that $\pi (z_i) = \pi (z)$ and $\gcd (z_{i-1}, z_i) \ne 1$ for all $i \in [1, k]$.

\smallskip
\item For  $a \in S$, we denote by $\mathcal R_a$ the {\it set of $\mathcal R$-$($equivalence$)$ classes} of $\mathsf Z (a)$. For $\sigma \in \mathcal R_a$
      we set $|\sigma| = \min \{|z| \mid z \in \sigma\}$, and we define
      \[
      \mu (a) = \sup \{ |\sigma| \mid \sigma \in \mathcal R_a\} \,.
      \]

\smallskip
\item We set
      \[
      \mu (S) = \sup \{ \mu (a) \mid a \in S \ \text{with} \ |\mathcal R_a| \ge 2 \} \,.
      \]
\end{enumerate}
\end{definition}

We will need the following result, first proved for finitely generated monoids (see \cite[Theorem 3.1]{C-G-L-P-R06})  and then in
the general setting (see \cite[Corollary 9]{Ph11a}).

\medskip
\begin{proposition} \label{4.6}
If $S$ is atomic, then $\mathsf c (S) = \mu (S)$.
\end{proposition}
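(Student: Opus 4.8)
The plan is to establish the equality $\mathsf c(S) = \mu(S)$ by proving the two inequalities separately, with the harder direction being $\mathsf c(S) \le \mu(S)$.

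For the inequality $\mu(S) \le \mathsf c(S)$, I would argue as follows. Let $a \in S$ with $|\mathcal R_a| \ge 2$, and let $\sigma \in \mathcal R_a$ be an $\mathcal R$-class; I must show $|\sigma| \le \mathsf c(S)$. Pick a factorization $z \in \sigma$ with $|z| = |\sigma|$, and pick some $z'' \in \mathsf Z(a)$ lying in a \emph{different} $\mathcal R$-class (possible since $|\mathcal R_a| \ge 2$). By definition of $\mathsf c(S)$, there is a $\mathsf c(S)$-chain $z = z_0, z_1, \ldots, z_k = z''$ of factorizations of $a$. Since $z$ and $z''$ are in different $\mathcal R$-classes, there is a first index $i$ along this chain where $z_i$ leaves the class $\sigma$, i.e. $z_{i-1} \in \sigma$ but $z_i \notin \sigma$. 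By the definition of $\mathcal R$-equivalence, $\gcd(z_{i-1}, z_i) = 1$ (otherwise $z_{i-1}$ and $z_i$ would be $\mathcal R$-related and $z_i$ would lie in $\sigma$). Hence $\mathsf d(z_{i-1}, z_i) = \max\{|z_{i-1}|, |z_i|\} \ge |z_{i-1}| \ge |\sigma|$, since $z_{i-1} \in \sigma$. But $\mathsf d(z_{i-1}, z_i) \le \mathsf c(S)$, so $|\sigma| \le \mathsf c(S)$, and taking suprema yields $\mu(S) \le \mathsf c(S)$.

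For the reverse inequality $\mathsf c(S) \le \mu(S)$, let $a \in S$ and $z, z' \in \mathsf Z(a)$; I must concatenate $z$ and $z'$ by a $\mu(S)$-chain of factorizations of $a$. The idea is to induct on $\mathsf d(z, z')$ (which is finite). If $z$ and $z'$ are $\mathcal R$-related, then by definition there is a chain $z = z_0, \ldots, z_k = z'$ in $\mathsf Z(a)$ with $\gcd(z_{i-1}, z_i) \ne 1$ for all $i$; writing $z_{i-1} = w v_i$ and $z_i = w v_i'$ with $w = \gcd(z_{i-1}, z_i)$, one has $\mathsf d(z_{i-1}, z_i) = \max\{|v_i|, |v_i'|\}$, and since $v_i, v_i'$ are both factorizations of $\pi(z_{i-1})/\pi(w)$ in an $\mathcal R$-class of fewer atoms, an induction on the number of atoms (or on distance) lets us bound $\mathsf d(z_{i-1}, z_i)$ by $\mu$ applied to that smaller element — this is where $\mu(a') \le \mu(S)$ gets used, via the observation that a common divisor can be cancelled. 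If $z$ and $z'$ lie in different $\mathcal R$-classes, then $|\mathcal R_a| \ge 2$, and one reduces to within-class arguments by first routing through a factorization of minimal length in each class; the jumps \emph{between} classes are controlled because one may choose the representatives to have length at most $\mu(a) \le \mu(S)$, and the distance between two factorizations each of length $\le \mu(S)$ is at most $\mu(S)$.

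The main obstacle is the careful bookkeeping in the direction $\mathsf c(S) \le \mu(S)$: one must simultaneously handle the passage between $\mathcal R$-classes and the recursion inside a class after cancelling a common divisor, and make sure the induction parameter (on which the whole argument rests) genuinely decreases. The cleanest way to organize this is probably a double induction — an outer induction on $|z z'|$ (the total number of atoms involved) and, for fixed total, a reduction using that $\gcd(z, z') \ne 1$ allows cancellation to a strictly smaller instance, while $\gcd(z, z') = 1$ forces $z, z'$ into a configuration where $\mathsf d(z,z') = \max\{|z|,|z'|\}$ is directly bounded by $\mu(a)$ because the relevant $\mathcal R$-classes have a small-length representative. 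I would cite \cite[Theorem 3.1]{C-G-L-P-R06} and \cite[Corollary 9]{Ph11a} and, rather than reproduce the full argument, indicate that the statement is exactly Proposition~\ref{4.6} as recorded there; but if a self-contained proof is wanted, the scheme above is the route I would follow.
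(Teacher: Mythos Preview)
The paper gives no proof of this proposition at all: it simply records the result and cites \cite[Theorem 3.1]{C-G-L-P-R06} for the finitely generated case and \cite[Corollary 9]{Ph11a} for the general atomic case, which is exactly what you propose doing in your final paragraph. Your optional self-contained sketch is sound in outline (the direction $\mu(S)\le\mathsf c(S)$ is fully correct as written; for $\mathsf c(S)\le\mu(S)$ the right formulation is not to \emph{bound} $\mathsf d(z_{i-1},z_i)$ by $\mu$ of the smaller element, but to \emph{replace} that single step by a $\mu(S)$-chain obtained from the inductive hypothesis applied to the quotient by $\gcd(z_{i-1},z_i)$), but since the paper itself defers to the literature, your citation already matches its approach.
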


There is no analogue result for the monotone catenary degree. Below we will provide the first example of a monoid which is not tame but which has finite
monotone catenary degree (indeed the catenary and the monotone catenary degree coincide). On the other side of the spectrum there are tame monoids with infinite monotone catenary degree. We recall the notion of finitely primary monoids, a concept which stems from ring theory. The monoid $S$ is  called  {\it finitely primary}
if there exist $s,\, \alpha \in \N$ with the following properties:
\begin{enumerate}
\item[]
$S$ is a submonoid of a factorial monoid $F= F^\times \times [p_1,\ldots,p_s]$ with $s$
pairwise non-associated prime elements $p_1, \ldots, p_s$ satisfying
\[\qquad
S \setminus S^\times \subset p_1 \cdot \ldots \cdot p_sF \quad \text{and} \quad (p_1 \cdot \ldots \cdot p_s)^\alpha F \subset S  \,.
\]
If this is the case, then we say that $S$ is finitely primary of \ {\it rank} \ $s$ \ and \ {\it exponent} \ $\alpha$.
\end{enumerate}
If $s=1$ and $F^{\times} = \{1\}$, then $S$ is isomorphic to a numerical monoid. Furthermore, $S$ is tame if and only if it is of rank $1$ (\cite[Theorem 3.1.5]{Ge-HK06a}).

\medskip
\begin{example} \label{4.7}
Let $S = (\mathbb N \times \mathbb N \cup \{(0,0)\}, +)$. Then $\mathsf c (S) = \mathsf c_{\mon} (S) = 3$ and hence $\Delta (S) = \{1\}$.
However, we have $\rho (S) = \infty$ and hence $\omega (S) = \mathsf t (S) = \infty$ (for the invariants not defined so far, see the discussion at Inequality \ref{basic}).
\begin{proof}
By definition, $S$ is a finitely primary monoid of rank $2$ and exponent $1$.
It is a special case of the monoid studied in \cite[Example 3.1.8]{Ge-HK06a}, where all assertions have been verified apart from the
formula for $\mathsf c_{\mon} (S)$. Indeed, it is straightforward that $\mathcal A (S) = \{ (1,m), (m,1) \mid m \in \mathbb N\}$. Furthermore, every element $a = (a_1, a_2) \in S \setminus (\mathcal A (S) \cup \{(0,0)\})$ can be written as a sum of two atoms, namely $(a_1, a_2) = (1, a_2-1) + (a_1-1, 1)$. These two observations
easily imply the assertions on $\mathsf c (S), \Delta (S)$ and $\rho (S)$. In order to show that $\mathsf c_{\mon} (S) = 3$ we proceed in two steps.

First we show that $\mathsf c_{\adj} (S) = 3$. Let $a \in S$ and $k, l \in \mathsf L (a)$ be adjacent lengths, say $k < l$. Since $\Delta (S) = \{1\}$, it follows that $l = k+1 \ge 3$. Let $z = u_1 \cdot \ldots \cdot u_{k+1} \in \mathsf Z_l (a)$ with $u_1, \ldots, u_{k+1} \in \mathcal A (S)$. Since $u_1u_2u_3$ is a product of two atoms, say $u_1u_2u_3 = v_1v_2$ with $v_1, v_2 \in \mathcal A (S)$, we infer that $z' = v_1v_2u_4 \cdot \ldots \cdot u_{k+1} \in \mathsf Z_k (a)$, and hence
$3 = \mathsf d (z, z') = \mathsf d \big(\mathsf Z_{k+1}(a), \mathsf Z_k (a) \big)$. Thus have $\mathsf c_{\adj} (a) = 3$, and hence we get $\mathsf c_{\adj} (S) = 3$.

Second, we verify that $\mathsf c_{\equal} (S) = 3$. Let $a = (m, n) \in S$ and $k \in \mathsf L (a)$. We have to show that each two factorizations $z, z' \in \mathsf Z_k (a)$ can be concatenated by a monotone $3$-chain of factorizations. By symmetry, we may assume that $m \le n$, and then we clearly have $k \le m$. We consider the factorization
\[
z^* = (k-2)\boldsymbol{(1,1)} + \boldsymbol{(m-k+1,1)} + \boldsymbol{(1,n-k+1)} \in \mathsf Z_k (a) \,.
\]
Clearly, it is sufficient to show that from every factorization $z \in \mathsf Z_k (a)$ there is a monotone $3$-chain of factorizations to $z^*$. Let $z \in \mathsf Z_k (a)$ be given. We proceed by induction on $\mathsf v_{\boldsymbol{(1,1)}} (z)$. If $\mathsf v_{\boldsymbol{(1,1)}} (z) = k-2$, then $z = z^*$ and we are done.
Suppose that $\mathsf v_{\boldsymbol{(1,1)}} (z) < k-2$. Then there are two atoms $u_1 = \boldsymbol{(1, a_1)}$ and $u_2 = \boldsymbol{(1, a_2)}$ with $a_1, a_2 \in \mathbb N_{\ge 2}$ and $(u_1+u_2) \t z$, or there are two atoms $u_1 = \boldsymbol{(a_1, 1)}$ and $u_2 = \boldsymbol{(a_2, 1)}$ with $a_1, a_2 \in \mathbb N_{\ge 2}$ and $(u_1+u_2) \t z$. By symmetry, we may suppose that the first case holds. Then we define
$z' = -u_1-u_2 + z + \boldsymbol{(1,1)} + \boldsymbol{(1,a_1+a_2-1)}$. Clearly, we have $z' \in \mathsf Z_k (a)$, $\mathsf d (z, z') = 3$, $\mathsf v_{\boldsymbol{(1,1)}} (z) < \mathsf v_{\boldsymbol{(1,1)}} (z')$, and hence the assertion follows.
\end{proof}
\end{example}

\bigskip
\section{The tame degrees} \label{5}
\bigskip

\medskip
\begin{definition} \label{5.1}
Let $S$ be  atomic.
\begin{enumerate}
\item For $a \in S$ and $x \in \mathsf Z (S)$, let $\mathsf t
      (a,x) \in \N_0 \cup \{\infty\}$ denote the
      smallest $N\in \N_0 \cup \{\infty\}$ with the following property{\rm \,:}
      \begin{enumerate}
      \smallskip
      \item[] If  $\mathsf Z(a) \cap x\mathsf Z(S) \ne \emptyset$ and
              $z \in \mathsf Z(a)$, then there exists $z' \in
              \mathsf Z(a) \cap x\mathsf Z(S)$ such that  $\mathsf d (z,z') \le
              N$.
      \end{enumerate}

\smallskip
\item For  subsets $S' \subset S$ and $X \subset \mathsf Z(S)$, we
      define
      \[
      \mathsf t (S',X) = \sup \big\{ \mathsf t (a,x) \, \big| \, a \in S',  x \in
      X\big\} \in \N_0 \cup \{\infty\} \,,
      \]
      and for $a \in S$, we set $\mathsf t (a, X) = \mathsf t ( \{a\},
      X)$.

\smallskip
\item $S$ is said to be {\it locally tame} if $\mathsf t (S, u) <
      \infty$ for all $u \in \mathcal A (S_{\red})$. We call
      \[
      \mathsf t (S) = \mathsf t (S, \mathcal A \big(S_{\red})\big) = \sup \{
      \mathsf t (S, u) \mid u \in \mathcal A (S_{\red}) \} \in \mathbb N_0
      \cup \{\infty\}
      \]
      the {\it tame degree} of $S$, and $S$ is called {\it tame} if $\mathsf t (S) < \infty$.

\smallskip
\item We set ${\mathsf a (S)} = \sup \{ |x| \mid (x,y) \in \mathcal A (
      \sim_S) \ \text{for some} \ y \in \mathsf Z (S) \} \in \N_0 \cup \{\infty\}$.
\end{enumerate}
\end{definition}

\medskip
Let $S$ be  reduced and atomic. Local tameness is a central
finiteness property in factorization theory, but the finiteness of
the tame degree is a rare property (a non-principal order $\mathfrak
o$ in an algebraic number field is locally tame with finite catenary
degree, and it is tame if and only if for every prime ideal
$\mathfrak p$ containing the conductor there is precisely one prime
ideal $\overline{\mathfrak p}$ in the principal order $\overline{
\mathfrak o}$ such that $\overline{\mathfrak p} \cap \mathfrak o =
\mathfrak p$). Whereas in $v$-noetherian monoids (these are monoids
satisfying the ascending chain condition for $v$-ideals) we have
$\omega (S, u) < \infty$ for all atoms $u \in \mathcal A (S)$, this
does not hold for the $\mathsf t (S, u)$ values (see \cite[Corollary
3.6]{Ge-Ha-Le07}, \cite[Theorems 4.2 and 4.4]{Ge-Ha08a},
\cite[Theorems 5.3 and 6.7]{Ge-Ha08b}). However, we have $\omega (S)
< \infty$ if and only if $\mathsf t (S) < \infty$ (see
Inequality \ref{5.1} below).  A main aim in this
section is to show that monoids having a generic presentation
satisfy $\omega (S) = \mathsf t (S)$ (Theorem \ref{5.5} and its
corollaries). After that we provide the first examples of numerical
monoids $S$ with $\omega (S) < \mathsf t (S)$.

Let $S$ be  atomic  and, to avoid trivialities, suppose that $S$ is
not factorial. Let us consider ${\mathsf a (S)}$.
If $S$ is finitely generated, then $\sim_S$ is
finitely generated, hence $\mathcal A ( \sim_S)$ is finite, and thus ${\mathsf a (S)} < \infty$. It
has been proved that ${\mathsf a (S)}$ is an upper bound for a variety of
arithmetical invariants, such as the catenary degree (e.g.,
\cite[Proposition 14]{Ph11a}; see also the forthcoming Corollary
\ref{6.4}). However, we have
\begin{equation}
2 + \sup \Delta (S) \overset{(1)}{\le} \mathsf c (S) \overset{(2)}{\le} \omega (S) \overset{(3)}{\le} \mathsf t
(S) \overset{(4)}{\le} \omega (S)^2 \quad \text{and} \quad  \rho (S) \overset{(5)}{\le} \omega (S) \,, \label{basic}
\end{equation}
where $\rho (S)$ is the
elasticity (see Definition \ref{6.1}): for $(1)$ see \cite[Theorem 1.6.3]{Ge-HK06a}, for $(2), (4)$ and $(5)$ see \cite[Section 3]{Ge-Ka10a}, and
$(3)$ can be found in  \cite[Theorem 3.6]{Ge-Ha08a}.
In the next proposition we will verify that ${\mathsf a (S)}$ is an upper bound for the tame degree
$\mathsf t (S)$, but even this  inequality  can be
strict (even for numerical monoids, see
\cite[Example 4.4]{C-G-L-P-R06}).

\medskip
\begin{proposition} \label{5.2}
Let $S$ be  atomic.
\begin{enumerate}
\item For every $u \in \mathcal A (S_{\red})$ we have
      \[
      \mathsf t (S, u) \le \sup \{ |x|, |y| \mid (x,y) \in \mathcal
      A (\sim_S), \, x \in u \mathsf Z (S) \} \,.
      \]

\smallskip
\item $\mathsf t (S) \le {\mathsf a (S)}$.

\smallskip
\item If \ ${\mathsf a (S)} < \infty$, then $\mathsf c_{\adj} (S) < \infty$.
\end{enumerate}
\end{proposition}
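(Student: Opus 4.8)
The plan is to prove the three statements of Proposition~\ref{5.2} in order, since each feeds into the next.

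\medskip
\noindent\emph{Proof of (1).}
Fix $u \in \mathcal A(S_{\red})$; we may assume $S$ is reduced. Let $a \in S$ and $z \in \mathsf Z(a)$, and suppose $\mathsf Z(a) \cap u\mathsf Z(S) \ne \emptyset$, say $uw \in \mathsf Z(a)$ for some $w \in \mathsf Z(S)$. Then $(z, uw) \in \ \sim_S$, and we write it as a product of atoms of $\sim_S$,
\[
(z, uw) = (x_1, y_1) \cdot \ldots \cdot (x_k, y_k), \qquad (x_i, y_i) \in \mathcal A(\sim_S) \,.
\]
Since $u \t uw = y_1 \cdot \ldots \cdot y_k$ in $\mathsf Z(S)$, there is an index $j$ with $u \t y_j$. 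Set $z' = x_j \cdot \prod_{i \ne j} y_i$. Then $\pi(z') = \pi(z) = a$ because $\pi(x_i) = \pi(y_i)$ for all $i$ and $\pi$ is a homomorphism, and $u \t y_j \t z'$, so $z' \in \mathsf Z(a) \cap u\mathsf Z(S)$. Finally $\gcd(z, z')$ contains $\prod_{i \ne j} y_i = \gcd(y_1 \cdots y_k,\, z')$-part, so $\mathsf d(z, z') \le \max\{|x_j|, |y_j|\}$, which is bounded by the right-hand side. Hence $\mathsf t(S, u)$ is at most that supremum.

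\medskip
\noindent\emph{Proof of (2).}
This is immediate from (1): for every $u \in \mathcal A(S_{\red})$ the bound in (1) is $\le \sup\{|x|, |y| \mid (x,y) \in \mathcal A(\sim_S)\}$; since $\sim_S$ is generated by pairs, for each atom $(x,y)$ we also have $(y,x) \in \mathcal A(\sim_S)$, so this equals $\sup\{|x| \mid (x,y) \in \mathcal A(\sim_S) \text{ for some } y\} = {\mathsf a(S)}$. Taking the supremum over $u$ gives $\mathsf t(S) \le {\mathsf a(S)}$.

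\medskip
\noindent\emph{Proof of (3).}
Assume ${\mathsf a(S)} < \infty$. First, finiteness of ${\mathsf a(S)}$ forces $\sup \Delta(S) < \infty$ and $\mathsf c(S) < \infty$ (via $\mathsf c(S) \le {\mathsf a(S)}$, cf.\ the remarks preceding the proposition, or directly from Inequality~\ref{basic} combined with (2)). Now use the decomposition $\mathsf c_{\mon}(S) = \sup\{\mathsf c_{\equal}(S), \mathsf c_{\adj}(S)\}$; the task reduces to bounding $\mathsf c_{\adj}(S)$. I would invoke Proposition~\ref{4.4}(4): $\mathsf c_{\adj}(S) \le \sup\{ d + |x| \mid x \in \text{\rm Min}(A_d),\, d \in \Delta(S)\}$, where $A_d = \{x \in \mathsf Z(S) \mid d + |x| \in \mathsf L(\pi(x))\}$. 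Since $\Delta(S)$ is finite and bounded, it suffices to bound $\sup\{|x| \mid x \in \text{\rm Min}(A_d)\}$ uniformly. The key point: if $x \in \text{\rm Min}(A_d)$, then $\pi(x)$ has two factorizations $x$ and some $x'$ with $|x'| = |x| + d$; the pair $(x, x')$ lies in $\sim_S$ and minimality of $x$ in $A_d$ should prevent $x$ from having a large ``removable'' common part with any factorization realizing the length jump --- so $(x, x')$ decomposes into atoms of $\sim_S$ in a way that lets one replace $x$ by a bounded-length element of $A_d$ unless $|x|$ is already bounded by a function of ${\mathsf a(S)}$ and $d$. Making this last implication precise --- extracting from the atomic decomposition of $(x,x')$ a genuinely smaller element still in $A_d$, thereby contradicting minimality once $|x|$ exceeds the bound --- is the main obstacle; I expect the cleanest route is to show directly that every element of $A_d$ dominates one of length at most $2\,{\mathsf a(S)} + d$ (or a similar explicit bound) by peeling off atoms of $\sim_S$ whose first coordinates sum to $|x| - d$ while the second coordinates witness a shorter factorization, so that $\text{\rm Min}(A_d)$ consists only of such short elements. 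Combining the finite number of $d$'s with the finite per-$d$ bound yields $\mathsf c_{\adj}(S) < \infty$, hence the assertion.
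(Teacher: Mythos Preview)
Your argument for (1) has the right structure, but the indices are swapped in the definition of $z'$. With $(z,uw)=(x_1,y_1)\cdots(x_k,y_k)$ you have $z=x_1\cdots x_k$ and $uw=y_1\cdots y_k$; since $u\mid y_j$, to obtain a factorization of $a$ divisible by $u$ you must replace $x_j$ in $z$ by $y_j$, i.e.\ set $z'=y_j\prod_{i\ne j}x_i$. Then indeed $u\mid y_j\mid z'$, and $\prod_{i\ne j}x_i\mid\gcd(z,z')$ gives $\mathsf d(z,z')\le\max\{|x_j|,|y_j|\}$. Your $z'=x_j\prod_{i\ne j}y_i$ need not be divisible by $u$, and the claimed divisibility $y_j\mid z'$ fails. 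With this correction, (1) and (2) coincide with the paper's proof.

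For (3) your outline is exactly the paper's strategy, but the step you flag as ``the main obstacle'' is a genuine gap requiring a nontrivial combinatorial ingredient. Writing $(x,x')=\prod_{i=1}^k(x_i,y_i)$ in $\mathcal A(\sim_S)$ and setting $b_i=|y_i|-|x_i|$, you have $\sum_{i=1}^k b_i=d$. What you need is a subset $I\subset[1,k]$ of size bounded independently of $k$ with $\sum_{i\in I}b_i=d$; then $x''=\prod_{i\in I}x_i$ lies in $A_d$, divides $x$, and satisfies $|x''|\le|I|\,\mathsf a(S)$, so minimal elements of $A_d$ are short. The existence of such a bounded $I$ is not automatic, because the $b_i$ may have both signs. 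The paper handles this by observing that the set $B=\{|x|-|y|\mid(x,y)\in\mathcal A(\sim_S)\}$ is finite (since $\mathsf a(S)<\infty$) and then invoking a subsum lemma \cite[Lemma~5.1]{Ge-Gr09b}: for a finite set $B\subset\Z$ and any $d$, there is a bound $M(B,d)$ such that every representation $d=\sum_{i=1}^k b_i$ with all $b_i\in B$ admits a sub-representation of length at most $M(B,d)$. Your tentative bound $2\,\mathsf a(S)+d$ does not follow from the ``peeling off atoms'' idea as stated, and in general $M(B,d)$ depends on the arithmetic of $B$, not just on $\max|B|$. You should either supply a proof of this subsum statement (a pigeonhole argument on partial sums over the finitely many residues works) or cite it.
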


\begin{proof}
1. and 2. Obviously, it is sufficient to prove the first statement.
Furthermore, we may assume that $S$ is reduced.

Let $u \in \mathcal A (S)$, $a \in uS$ and $z \in \mathsf Z (a)$. We
have to find a factorization $z' \in \mathsf Z (a) \cap u \mathsf Z
(S)$ such that
\[
\mathsf d (z, z') \le \sup \{ |x|, |y| \mid (x,y) \in \mathcal
      A (\sim_S), x \in u \mathsf Z (S) \} \,.
\]
Since $a \in uS$, there exists some $\bar{z} \in \mathsf Z (a) \cap
u \mathsf Z (S)$. We consider a factorization of $(z, \bar{z})$ in
$\sim_S$, say
\[
(z, \bar{z}) = (z_1, \bar{z_1}) \cdot \ldots \cdot (z_k, \bar{z_k})
\,,
\]
where $k \in \mathbb N$, $(z_i, \bar{z_i}) \in \mathcal A ( \sim_S)$
for all $i \in [1, k]$ and $\bar{z_1} \in u \mathsf Z (S)$. Then $z'
= \bar{z_1} (z_1^{-1}z) \in \mathsf Z (a) \cap u \mathsf Z (S)$ and
\[
\mathsf d (z, z') \le \max \{|z_1|, |\bar{z_1}|\} \le \sup \{ |x|,
|y| \mid (x,y) \in \mathcal
      A (\sim_S), x \in u \mathsf Z (S) \} \,.
\]

\smallskip
3. Suppose that ${\mathsf a (S)} < \infty$. Then
\[
B = \{ |x| - |y| \mid       (x,y) \in \mathcal A (\sim_S) \} \subset \mathbb Z
\]
is finite. Furthermore, by 2. and Inequality \ref{basic}, the set of distances $\Delta (S)$ is finite. By Proposition \ref{4.4}.4,
it suffices to verify that
\[
\sup \{ |x| \mid x \in \text{\rm Min} (A_d), \,  d \in \Delta (S) \} < \infty \,,
\]
where $A_d = \{ x \in \mathsf Z (S) \mid d+|x| \in \mathsf L \big( \pi (x)\big) \}$. Let $x \in \mathsf Z (S)$ and $y \in \mathsf Z \big( \pi (x)\big)$ such that $|y| = |x| + d$. Consider a factorization
\[
(x,y) = (x_1, y_1) \cdot \ldots \cdot (x_k, y_k)
\,,
\]
where $k \in \mathbb N$ and $(x_i, y_i) \in \mathcal A ( \sim_S)$
for all $i \in [1, k]$. There exists a bound $M(B, d)$ with the following property (for the construction of
an explicit upper bound,  see \cite[Lemma 5.1]{Ge-Gr09b}): there is a subset $I \subset [1,k]$ with $|I| \le M(B, d)$ such that for
\[
x' = \prod_{i \in I} x_i \quad \text{and} \quad y' = \prod_{i \in I} y_i
\]
we have $|y'| = |x'| + d$. Since $x' \t x$, there is an $x^* \in \text{\rm Min} (A_d)$ with $x^* \t x' \t x$ and
$|x^*| \le |x'| \le M(B, d) {\mathsf a (S)}$.
\end{proof}

\medskip
We discuss the relationship between the finiteness of the monotone catenary degree, of the tame degree and of its upper bound ${\mathsf a (S)}$.
Example \ref{4.7} shows that the monotone catenary degree can be finite even if the monoid is not tame. Conversely,
Example 4.5 in \cite{Fo06a} provides a finitely primary monoid of rank $1$ (hence it is tame and has finite catenary degree) for which
$\mathsf c_{\equal}(S)$  is infinite.
In contrast to that example, Theorem 5.1 from \cite{Ge-Ka10a} shows that a  slightly weaker variant of
$\mathsf c_{\adj}(S)$ is finite for tame monoids. More precisely, it states that in a tame monoid $S$ there is a constant $M \in \mathbb N$ with the following property:
\begin{itemize}
\item[] For each two adjacent lengths \ $k, \, l \in \mathsf L (a)
           \cap [\min \mathsf L (a) + M, \, \max \mathsf L (a) - M]$ \ we
           have \newline
           $\mathsf d \bigl( \mathsf Z_k (a), \mathsf Z_l (a) \bigr) \le M$.
\end{itemize}
Proposition \ref{5.2}.3 shows that the finiteness of ${\mathsf a (S)}$---which is stronger than the finiteness of the tame degree---enforces the finiteness of $\mathsf c_{\adj}(S)$. It is an interesting open problem  whether the finiteness of the tame degree is strong enough to guarantee the finiteness of $\mathsf c_{\adj}(S)$.

\medskip
Let $S$ be atomic but not factorial. We discuss the relationship between $\omega (S)$ and $\mathsf t (S)$.
If $u \in S$ is an atom but not
prime, then
\[
\mathsf t (S, uS^{\times}) = \max \bigl\{ \omega (S, u), 1 + \tau
(S, u) \bigr\}  \in \mathbb N_{\ge 2} \cup \{\infty\} \,
\]
and thus
\begin{equation}
\mathsf t (S) = \max \{ \omega (S), 1+ \tau (S) \} \label{basic3}
\end{equation}
(see \cite[Theorem 3.6]{Ge-Ha08a}; since it is not needed in the
sequel, we do not repeat the definition of the $\tau$-invariant).
For a large class of Krull monoids it was found out that $\mathsf t
(S) = 1 + \tau (S)$ (\cite[Corollary 4.6]{Ge-Ha08a}). In contrast to
that result, M. Omidali recently proved that $\omega (S) = \mathsf t
(S)$ for numerical monoids generated by almost arithmetical
progressions (see \cite[Theorem 3.10]{Om10a}).  Theorem \ref{5.5} provides a result of this type in a more general setting. We will frequently make use of
the following fact: if $\mathsf t (S) < \infty$, then there is an $a
\in S$ and a $u \in \mathcal A (S_{\red})$ such that $\mathsf t (a,
u)  = \mathsf t (a, \mathcal A \big(S_{\red}) \big) = \mathsf t (S)$.

\medskip
\begin{lemma} \label{5.3}
Let $S$ be a reduced and finitely generated. Then for every subset
$X \subset S$ there exists  a  finite set $E \subset X$ such that $X
\subset EH$. Clearly, $E$ can be chosen to be minimal.
\end{lemma}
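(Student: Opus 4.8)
The plan is to pull $X$ back along a surjection from a free commutative monoid and then apply Dickson's Lemma. Since $S$ is finitely generated, I would fix a finite generating set $P \subset S$ and let $\varphi \colon \mathcal F(P) \to S$ be the unique monoid homomorphism with $\varphi(p) = p$ for every $p \in P$; because $P$ generates $S$, the map $\varphi$ is surjective, and $\mathcal F(P) \cong (\N_0^{|P|}, +)$. (Since a reduced finitely generated monoid is atomic, one may even take $P = \mathcal A(S)$ and $\varphi = \pi$, but atomicity plays no role below.) Now put $Y = \varphi^{-1}(X) \subset \mathcal F(P)$.

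The key step is Dickson's Lemma (see \cite{Ge-HK06a}): in the free commutative monoid $\mathcal F(P)$ every subset has only finitely many minimal elements with respect to divisibility. Hence $\text{\rm Min}(Y)$ is finite, and I would set $E = \varphi\bigl(\text{\rm Min}(Y)\bigr)$. Then $E$ is finite, and $E \subset X$ because $\text{\rm Min}(Y) \subset Y = \varphi^{-1}(X)$.

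It then remains to check $X \subset ES$. Given $x \in X$, surjectivity of $\varphi$ yields some $y \in \varphi^{-1}(x) \subset Y$, and by exactly the argument used in Section \ref{2} for $\mathsf Z(S)$ (an element of a free commutative monoid has only finitely many divisors) there is some $y^* \in \text{\rm Min}(Y)$ with $y^* \t y$ in $\mathcal F(P)$; applying $\varphi$ gives $\varphi(y^*) \t \varphi(y) = x$ in $S$, so $x \in \varphi(y^*)S \subset ES$. For the concluding remark, since $E$ is finite one may simply pass to a subset $E' \subset E$ that is minimal with respect to inclusion among all $F \subset E$ with $X \subset FS$; such an $E'$ exists, satisfies $E' \subset X$, and is finite — deleting from $E$ an element $e$ with $e \in e'S$ for some $e' \in E \setminus \{e\}$ does not change $ES$ (as $eS \subset e'S$), which is why minimization cannot destroy the covering. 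The only substantive ingredient here is Dickson's Lemma; everything else is routine bookkeeping about the epimorphism $\varphi$, so I do not anticipate a genuine obstacle.
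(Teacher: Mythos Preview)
Your argument is correct: pulling $X$ back along the surjection $\varphi\colon \mathcal F(P)\to S$, applying Dickson's Lemma to $Y=\varphi^{-1}(X)$, and setting $E=\varphi\bigl(\text{\rm Min}(Y)\bigr)$ does exactly what is needed, and your verification that every $y\in Y$ is divisible by some $y^*\in\text{\rm Min}(Y)$ (using finiteness of divisors in $\mathcal F(P)$) is clean. You also correctly read the statement as $X\subset ES$; the ``$EH$'' in the printed lemma is a typographical slip.

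The paper does not give a proof at all: it simply cites \cite[Proposition~2.7.4]{Ge-HK06a} as a special case. That proposition is precisely the Dickson-type finiteness principle for finitely generated (reduced) monoids, and its proof in \cite{Ge-HK06a} is essentially the argument you wrote out. So your approach is not different in spirit from what the paper invokes; you have merely made the content of the citation explicit. The only thing the citation buys is brevity; what your version buys is self-containment and the observation that atomicity is irrelevant.
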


\begin{proof}
This is a special case of \cite[Proposition 2.7.4]{Ge-HK06a}.
\end{proof}

\smallskip
The next lemma is a generalization of Lemma 5 in \cite{C-G-L-P-R06}.

\medskip
\begin{lemma} \label{5.4}
Let $S$ be  reduced and atomic, $a \in S$, $z \in \mathsf Z (a)$
and $u, v \in \mathcal A (S)$.
\begin{enumerate}
\item Suppose that $\mathsf t \big(a, \mathcal A (S) \big) = \mathsf d \big(z,
      \mathsf Z (a) \cap u \mathsf Z (S) \big) > 0$, $z \in v \mathsf Z (S)$
      and $a \in uvS$. Then $\mathsf t \big(v^{-1}a, \mathcal A (S) \big) \ge
      \mathsf t \big(a, \mathcal A (S) \big)$.

\smallskip
\item Let $a \in S$ be minimal such that $\mathsf t \big(a, \mathcal A
      (S) \big) = \mathsf t (S) > 0$ $($this means that no proper divisor $b$ of
      $a$ satisfies $\mathsf t \big(b, \mathcal A (S) \big) = \mathsf t (S)$$)$ and
      let $z' \in \mathsf Z (a) \cap u \mathsf Z (S)$ such that $\mathsf d
      (z, z') = \mathsf t (S)$. Then $z \in \text{\rm Min} \bigl(
      \mathsf Z (uS) \bigr)$.
\end{enumerate}
\end{lemma}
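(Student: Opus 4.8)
The plan is to prove part 1 by a direct lifting argument and then deduce part 2 from it using the minimality of $a$. \textbf{For part 1}, put $b = v^{-1}a$; since $a \in uvS$ we may write $a = uvc$ with $c \in S$, so $b = uc \in uS$ and in particular $\mathsf Z(b) \cap u\mathsf Z(S) \neq \emptyset$. As $z \in v\mathsf Z(S)$, write $z = v\tilde z$; applying $\pi$ gives $\pi(\tilde z) = b$, so $\tilde z \in \mathsf Z(b)$. By the definition of $\mathsf t(b,u)$, applied to the factorization $\tilde z$ of $b$, there is $\tilde z' \in \mathsf Z(b) \cap u\mathsf Z(S)$ with $\mathsf d(\tilde z,\tilde z') \le \mathsf t(b,u)$. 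Then $v\tilde z' \in \mathsf Z(a)\cap u\mathsf Z(S)$, and since the distance on the free monoid $\mathsf Z(S)$ is translation invariant (immediate from $\gcd(vx,vy) = v\gcd(x,y)$) we obtain
\[
\mathsf t\big(a,\mathcal A(S)\big) = \mathsf d\big(z,\mathsf Z(a)\cap u\mathsf Z(S)\big) \le \mathsf d(z,v\tilde z') = \mathsf d(\tilde z,\tilde z') \le \mathsf t(b,u) \le \mathsf t\big(v^{-1}a,\mathcal A(S)\big),
\]
which is the claim.

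\textbf{For part 2}, note first that $z$ and $u$ are to be understood (cf.\ the remark preceding the lemma) as chosen so that $\mathsf d(z,\mathsf Z(a)\cap u\mathsf Z(S)) = \mathsf t(a,u) = \mathsf t(a,\mathcal A(S)) = \mathsf t(S) > 0$; this is exactly what makes $\mathsf d(z,z') = \mathsf t(S)$ for a nearest $z'$. The key observation is that \emph{no atom $v$ can satisfy both $z \in v\mathsf Z(S)$ and $a \in uvS$}: were there such a $v$, part 1 would give $\mathsf t(v^{-1}a,\mathcal A(S)) \ge \mathsf t(S)$, hence equality (the reverse inequality being trivial), while $v^{-1}a$ is a proper divisor of $a$ because $v$ is a non-unit --- contradicting the minimality of $a$. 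Now suppose $z \notin \text{\rm Min}(\mathsf Z(uS))$; note $z$ does belong to $\mathsf Z(uS)$, since $a \in uS$ (as $z' \in \mathsf Z(a)\cap u\mathsf Z(S)$). Choose $y \in \mathsf Z(uS)$ with $y \t z$ and $y \neq z$, write $z = yw$ with $1 \neq w \in \mathsf Z(S)$, and pick an atom $v$ with $v \t w$. Then $z \in v\mathsf Z(S)$, and since $\pi(y) \in uS$ while $v = \pi(v)$ divides $\pi(w)$ in $S$, we get $a = \pi(z) = \pi(y)\pi(w) \in uvS$ --- the forbidden combination. Hence $z \in \text{\rm Min}(\mathsf Z(uS))$.

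The argument is essentially bookkeeping, and the one step that needs a moment's thought is the last one: recognizing that removing a single atom $v$ of the redundant part $w = y^{-1}z$ simultaneously keeps $z$ in $v\mathsf Z(S)$ and pushes $a$ into $uvS$, which is precisely what lets us invoke part 1 and contradict the minimality of $a$. One should also be slightly careful with the translation-invariance of $\mathsf d$ in part 1 and with checking that $v^{-1}a$ is indeed a \emph{proper} divisor of $a$.
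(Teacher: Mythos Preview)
Your proof is correct and follows essentially the same approach as the paper's: for part~1 you lift a factorization of $v^{-1}a$ back to $a$ via multiplication by $v$ and use translation invariance of $\mathsf d$, and for part~2 you extract from a non-minimal $z$ an atom $v$ with $v^{-1}z \in \mathsf Z(uS)$ and invoke part~1 to contradict the minimality of $a$. The only cosmetic difference is that the paper phrases part~1 by first choosing a closest $z' \in \mathsf Z(v^{-1}a)\cap u\mathsf Z(S)$ to $v^{-1}z$, whereas you appeal directly to the definition of $\mathsf t(b,u)$; the chains of inequalities are the same.
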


\begin{proof}
1. By definition, we have $\mathsf t \big(v^{-1}a, \mathcal A (S) \big) \ge
\mathsf d \big(v^{-1}z, \mathsf Z (v^{-1}a) \cap u \mathsf Z (S) \big)$. If
$z' \in \mathsf Z (v^{-1}a) \cap u \mathsf Z (S)$ such that
$\mathsf d (v^{-1}z, z') = \mathsf d \big(v^{-1}z, \mathsf Z (v^{-1}a)
\cap u \mathsf Z (S) \big)$, then
\[
\begin{aligned}
\mathsf t \big(v^{-1}a, \mathcal A (S) \big) & \ge \mathsf d (v^{-1}z, z') =
\mathsf d (z, vz') =
\mathsf d \bigl(z, v \bigl( \mathsf Z (v^{-1}a) \cap u \mathsf Z (S) \bigr) \bigr) \\
 & \ge \mathsf d \big(z, \mathsf Z (a) \cap u \mathsf Z (S) \big) = \mathsf t \big(a, \mathcal A (S) \big) \,.
\end{aligned}
\]

\smallskip
2. Assume to the contrary that $z \notin \text{Min} \bigl( \mathsf Z
(uS)  \bigr)$. Then there exists an atom, say $v$, such that
$v^{-1}z \in \mathsf Z (uS)$. Since $z' \in u \mathsf Z (S)$ with
$\mathsf d (z, z') = \mathsf t (S) > 0$, it follows that $u \nmid z$
and hence $\mathsf d \big( z, \mathsf Z (a) \cap u \mathsf Z (S) \big) > 0$.
Thus 1. implies that $\mathsf t \big(v^{-1}a, \mathcal A (S) \big) \ge
\mathsf t \big(a, \mathcal A (S) \big) = \mathsf t (S)$, a contradiction to
the minimality of $a$.
\end{proof}

\medskip
We say that $S$ has a {\it unique minimal presentation} if it has a minimal presentation $\sigma$  and for each minimal presentation $\tau$
we have $\sigma \cup \sigma^{-1} = \tau \cup \tau^{-1}$. If this holds then $\sigma$ is called a {\it unique minimal presentation} of $S$.
We will need that every generic presentation is a unique minimal presentation.
 Although this can be obtained as a consequence of a result by I.Peeva and B.Sturmfels, we present a short and  independent proof in the language of monoids (see \cite[Remark 4.4.3]{Pe-St98}; indeed the term {\it generic presentation} has been chosen to reflect the origins in {\it generic lattice ideals}).

\medskip
\begin{proposition} \label{4.8}
Every generic presentation of $S$ is a  unique minimal presentation.
\end{proposition}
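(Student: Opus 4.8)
The plan is to show that a generic presentation $\sigma$ is minimal (this is already part of the definition of generic) and, crucially, \emph{forced}: every minimal presentation of $S$ must contain, up to switching coordinates, each relation of $\sigma$. So suppose $\sigma$ is a generic presentation and let $\tau$ be an arbitrary minimal presentation of $S$; I want $\sigma\cup\sigma^{-1}=\tau\cup\tau^{-1}$. Since by hypothesis $S$ has a generic presentation, $S_{\red}$ is finitely generated and has no primes, so I may assume $S$ is reduced and finitely generated with atom set $\mathcal A(S)$; both $\sigma$ and $\tau$ are then finite. Fix $(x,y)\in\sigma$. The key structural feature of a generic relation is that $\supp(xy)=\mathcal A(S)$, hence $\gcd(x,y)=1$ (a common atom would have to appear in both $x$ and $y$ with multiplicity, but genericity plus minimality forces the two sides of a relation in a minimal presentation to be coprime — this is a standard property of minimal presentations, cf.\ \cite[Chapter 9]{Ro-GS99}), and in particular $x$ and $y$ use \emph{every} atom.

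The heart of the argument is the following: the element $c=\pi(x)=\pi(y)\in S$ has the property that its set of factorizations $\mathsf Z(c)$ becomes disconnected as soon as we remove the "edge" $\{x,y\}$, and moreover $x,y$ are (up to the symmetric pair) the \emph{only} two factorizations of $c$ lying in distinct $\mathcal R$-classes that any minimal presentation can use to connect $c$. Concretely, I would argue that $x$ and $y$ lie in different $\mathcal R$-classes of $\mathsf Z(c)$: if they could be joined by a chain with consecutive common-atom overlaps, one could delete $(x,y)$ from $\sigma$ and still generate $\sim_S$, contradicting minimality of $\sigma$. Now take any minimal presentation $\tau$. Because $\tau$ generates $\sim_S$, the pair $(x,y)$ is derivable from $\tau$, i.e.\ there is a chain $x=z_0,\dots,z_k=y$ with consecutive steps $(z_{i-1},z_i)=(x_{i-1}w_i,x_iw_i)$ and $(x_{i-1},x_i)\in\tau\cup\tau^{-1}$. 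Since $\pi(x)=\pi(y)=c$ and $x,y$ are coprime while $\supp(xy)=\mathcal A(S)$, every intermediate $z_i$ still satisfies $\pi(z_i)=c$; and since $x$ and $y$ are in different $\mathcal R$-classes of $\mathsf Z(c)$, at least one step $i$ of this chain must \emph{cross} between the two $\mathcal R$-classes, meaning $\gcd(z_{i-1},z_i)=w_i=1$, so that step is literally $(x_{i-1},x_i)=(z_{i-1},z_i)\in\tau\cup\tau^{-1}$ with $z_{i-1},z_i$ factorizations of $c$ in different $\mathcal R$-classes.

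To finish, I would show that the only factorizations of $c$ are $x$, $y$, and factorizations $\mathcal R$-equivalent to one of them; more precisely, that the $\mathcal R$-classes of $\mathsf Z(c)$ are exactly two, with representatives $x$ and $y$ — this uses genericity heavily, since $\supp(xy)=\mathcal A(S)$ means $c$ is "as large as possible" relative to the atoms, which pins down $\mathsf Z(c)$ (any third factorization, or a refinement of the crossing step, would contradict minimality of $\sigma$, since it would give an alternative way to bridge the two classes and allow deletion of $(x,y)$ from $\sigma$). Hence the crossing step of the $\tau$-chain is, after an $\mathcal R$-equivalence, exactly $(x,y)$, so $(x,y)\in\tau\cup\tau^{-1}$ up to the identification of $\mathcal R$-equivalent factorizations — and then a short argument shows it is literally $(x,y)$ or $(y,x)$ because both sides of a relation in a minimal presentation are $\mathcal R$-minimal in their class. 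This gives $\sigma\cup\sigma^{-1}\subseteq\tau\cup\tau^{-1}$; by minimality of $\tau$ and the fact that $\sigma$ already generates $\sim_S$, equality follows. The main obstacle is the precise claim that a generic relation $(x,y)$ forces $\mathsf Z(\pi(x))$ to have exactly two $\mathcal R$-classes represented by $x$ and $y$ — everything else is the standard "relations $\leftrightarrow$ $\mathcal R$-classes" dictionary of \cite[Chapter 9]{Ro-GS99} together with Proposition~\ref{4.6}'s circle of ideas — so I would isolate that as a separate claim and prove it by exploiting $\supp(xy)=\mathcal A(S)$ to rule out any factorization not comparable to $x$ or $y$.
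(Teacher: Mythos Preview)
Your outline correctly reduces the problem to the ``relations $\leftrightarrow$ $\mathcal R$-classes'' dictionary and correctly isolates the two facts one needs at each Betti element $c=\pi(x)=\pi(y)$: (i) $\mathsf Z(c)$ has exactly two $\mathcal R$-classes, and (ii) each of those classes is a singleton (so that any minimal presentation is forced to use literally the pair $(x,y)$). You handle (i) well---the observation that $\supp(x)$ and $\supp(y)$ partition $\mathcal A(S)$, so every other factorization shares an atom with $x$ or with $y$, is exactly right. The gap is (ii).

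Your substitute for (ii) is the sentence ``a short argument shows it is literally $(x,y)$ or $(y,x)$ because both sides of a relation in a minimal presentation are $\mathcal R$-minimal in their class.'' That claim is not correct: in the standard construction of minimal presentations one chooses, for each edge of a spanning tree on the $\mathcal R$-classes, \emph{any} representative from each endpoint class---there is no minimality constraint, and different choices give different minimal presentations. So if an $\mathcal R$-class of $\mathsf Z(c)$ contained two distinct factorizations, $\tau$ could legitimately use one different from $x$ (or $y$), and you would not get $(x,y)\in\tau\cup\tau^{-1}$. Hence uniqueness really does require proving that each $\mathcal R$-class is a singleton, and nothing in your sketch does this; the clause ``any third factorization \ldots would give an alternative way to bridge the two classes'' only rules out a third $\mathcal R$-class, not a second element within an existing class.

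The paper proves (ii) by an infinite-descent argument that you are missing entirely: assume two distinct factorizations $z,z'$ lie in the same $\mathcal R$-class of some Betti element $s$, take a chain with $\gcd(z_{i-1},z_i)\ne 1$, strip off a common factor at one step to pass to a proper divisor $s_1$ of $s$, and argue (using genericity again) that the residual factorizations are still distinct and still in the same $\mathcal R$-class of $\mathsf Z(s_1)$. Iterating produces an infinite strictly descending chain of divisors, contradicting finite generation. This descent is the heart of the proof and is precisely the ``main obstacle'' you flag but do not address.
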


\begin{proof}
We may suppose that $S$ is reduced. Then $S$ is finitely generated. Recall that any
minimal presentation is constructed by choosing pairs of elements in
different $\mathcal R$-classes of elements with more than one $\mathcal
R$-class (see \cite[Chapter 9]{Ro-GS99}).

Let $\sigma \subset \ \sim_S$ be a generic presentation.
Then every pair $(x,y)\in
\sigma$ has full support, and $x$ and $y$ are in different $\mathcal
R$-classes. Thus, if $x, y \in \mathsf Z(s)$, then $\mathsf Z(s)$ can
consist of only two $\mathcal R$-classes, and the union of their support is the set
of all atoms. So for every $s \in S$ with $|\mathsf Z (s)|\ge 2$, the set of factorizations $\mathsf Z (s)$
consists of precisely two $\mathcal R$-classes, and $\sigma$ is unique if and only if  each such $\mathcal R$-class
contains precisely one factorization.

Assume to the contrary that there is an $s\in S$ such that $\mathsf Z(s)$ consists of two
$\mathcal R$-classes and that two distinct factorizations $z,z'\in \mathsf Z(s)$
are in the same $\mathcal R$-class.
By definition, $z$ and $z'$ can be
concatenated by a chain of factorizations
      $z = z_0, \ldots, z_k = z'$ such that $\pi (z_i) = s$ and $\gcd (z_{i-1}, z_i) \ne 1$ for all $i \in [1, k]$.

We set $z_1 = xy_1$, $z_2 = xy_2$ and $s_1 = \pi (y_1) \in S$ where $x = \gcd (z_1, z_2)$ and $y_1, y_2 \in \mathsf Z (S)$. Note that $s_1$ is a proper divisor of $s_0 = s$. Since $\supp (y_1) \subset \supp (z_1)$ and
$\supp (y_2) \subset \supp (z_2)$, $y_1$ and $y_2$ are in the same $\mathcal R$-class, because otherwise $\mathsf Z (s_1)$ would consist of two $\mathcal R$-classes
and there would be a relation without full support.

Iterating this construction we obtain an infinite sequence $(s_i)_{i \ge 0}$ where $s_{i+1}$ is a proper divisor of $s_i$ for all $i \in \mathbb N_0$, a contradiction to $S$ being finitely generated.
\end{proof}

\medskip
\begin{theorem} \label{5.5}
Let $S$ be atomic, $P \subset S$ a set of representatives of the set
of primes of $S$ and $T$ the set of all $a \in S$ such that $p \nmid
a $ for all $p \in P$. Suppose that $T = \coprod_{i \in I} T_i$, $T
\ne T^{\times}$ and that there is an $i^* \in I$ such that $T_{i^*}$
has a generic presentation and $\mathsf t (T_{i^*}) = \mathsf t
(T)$. Then $\mathsf c (S) = \omega (S) = \mathsf t (S)$.
\end{theorem}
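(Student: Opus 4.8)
The plan is to strip off the primes, then the coproduct, reducing everything to the single monoid $R := T_{i^*}$, which has a generic presentation, and there to close a short cycle of inequalities. Throughout I may and will assume that $S$ — hence $T$ and every $T_i$ — is reduced, since all invariants in sight depend only on $S_{\red}$.

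First I would reduce to $R$. Because $S$ is atomic and $P$ represents its primes, $S \cong T \times \mathcal F(P)$ and so $\mathsf Z(S) \cong \mathsf Z(T) \times \mathcal F(P)$; since $\mathcal F(P)$ is free and a prime occurs in \emph{every} factorization of one of its multiples, passing from $S$ to $T$ changes none of $\mathsf c$, $\omega$, $\mathsf t$, whence $\mathsf c(S) = \mathsf c(T)$, $\omega(S) = \omega(T)$, $\mathsf t(S) = \mathsf t(T)$ (here $T \ne T^{\times}$ ensures $T$ is not factorial). From $T = \coprod_{i} T_i$ we get $\mathsf Z(T) \cong \bigoplus_i \mathsf Z(T_i)$, and since factorizations, $N$-chains of factorizations, and the witnesses in the definition of $\mathsf t(\cdot, u)$ all split and recombine componentwise, a routine computation gives $\mathsf c(T) = \sup_i \mathsf c(T_i)$, $\omega(T) = \sup_i \omega(T_i)$, $\mathsf t(T) = \sup_i \mathsf t(T_i)$. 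By hypothesis $\mathsf t(T) = \mathsf t(R)$, and $\mathsf t(R) < \infty$ because $R$ is finitely generated (it has a generic presentation). Now it suffices to prove $\mathsf t(R) \le \mathsf c(R)$: together with $\mathsf c(R) \le \omega(R) \le \mathsf t(R)$ from Inequality~\ref{basic} this gives $\mathsf c(R) = \omega(R) = \mathsf t(R)$, and then $\mathsf t(S) = \mathsf t(R) = \mathsf c(R) \le \sup_i \mathsf c(T_i) = \mathsf c(T) = \mathsf c(S) \le \omega(S) \le \mathsf t(S)$ forces $\mathsf c(S) = \omega(S) = \mathsf t(S)$.

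So assume $\mathsf t(R) > 0$. Using the fact recalled after Equation~\ref{basic3}, choose $a \in R$ and $u \in \mathcal A(R)$ with $\mathsf t(a, u) = \mathsf t\big(a, \mathcal A(R)\big) = \mathsf t(R)$, then $z \in \mathsf Z(a)$ with $\mathsf d\big(z, \mathsf Z(a) \cap u\mathsf Z(R)\big) = \mathsf t(R)$, and $z' \in \mathsf Z(a) \cap u\mathsf Z(R)$ with $\mathsf d(z, z') = \mathsf t(R)$. Then $u \nmid z$ while $u \mid z'$ (as $\mathsf t(R) > 0$), so $z \ne z'$ and $|\mathsf Z(a)| \ge 2$; since $R$ has a generic presentation, the proof of Proposition~\ref{4.8} shows $|\mathsf Z(a)| = 2$, say $\mathsf Z(a) = \{z, z'\}$, with $z$ and $z'$ forming distinct singleton $\mathcal R$-classes, both of full support. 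The heart of the matter is to show $\gcd(z,z') = 1$. Write $z = wg$ and $z' = wg'$ in $\mathsf Z(R)$ with $w = \gcd(z,z')$ and $\gcd(g, g') = 1$; note $g \ne 1 \ne g'$, since e.g.\ $g = 1$ would give $z = w \mid z'$, hence $z = z'$. If $w \ne 1$, then $b := \pi(g) = \pi(g')$ is a non-unit proper divisor of $a$ with $g \ne g'$ factorizations of $b$, so the proof of Proposition~\ref{4.8} again yields $\mathsf Z(b) = \{g, g'\}$ with $g$ of full support; in particular $u \mid g \mid z$ in $\mathsf Z(R)$, contradicting $u \nmid z$. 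Hence $\gcd(z,z') = 1$, so $\mathsf t(R) = \mathsf d(z,z') = \max\{|z|, |z'|\}$. On the other hand $\mathsf Z(a) = \{z,z'\}$ with singleton $\mathcal R$-classes gives $\mu(a) = \max\{|z|,|z'|\}$ and $|\mathcal R_a| = 2$, so Proposition~\ref{4.6} yields $\mathsf c(R) = \mu(R) \ge \mu(a) = \max\{|z|,|z'|\} = \mathsf t(R)$, as needed.

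I expect the main obstacle to be exactly the last paragraph: controlling the ``other'' factorization $z'$ and not just $z$. It is the full-support property built into a generic presentation (via Proposition~\ref{4.8}) that makes the reduction to $\gcd(z,z') = 1$ possible; an alternative would be to choose $a$ minimal with $\mathsf t\big(a,\mathcal A(R)\big) = \mathsf t(R)$ and invoke Lemma~\ref{5.4} — its part~2 puts $z$ in $\text{\rm Min}(\mathsf Z(uR))$, so $|z| \le \omega(R)$ by Proposition~\ref{3.3}, and its part~1 rules out $w \ne 1$ using minimality of $a$ — but that route seems to demand more bookkeeping than the one above.
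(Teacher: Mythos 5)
Your reduction to the single reduced monoid $R = T_{i^*}$ with a generic presentation, and the observation that it suffices to prove $\mathsf t(R) \le \mathsf c(R)$, are fine and consistent with the paper's own first step. The proof breaks at the sentence ``$z \ne z'$ and $|\mathsf Z(a)| \ge 2$; since $R$ has a generic presentation, the proof of Proposition \ref{4.8} shows $|\mathsf Z(a)| = 2$.'' What Proposition \ref{4.8} actually delivers is a statement about elements whose set of factorizations splits into \emph{at least two $\mathcal R$-classes} (the Betti elements, i.e.\ those supporting a relation of the minimal presentation): for such elements there are exactly two $\mathcal R$-classes, each a singleton of full support. It says nothing about an element with a single $\mathcal R$-class, which can have arbitrarily many factorizations even when the presentation is generic --- take any large element of a numerical monoid $\langle n_1,n_2,n_3\rangle$ with pairwise coprime generators (Corollary \ref{5.7}): all of its many factorizations are $\mathcal R$-equivalent. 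You never show that your $z$ and $z'$ lie in different $\mathcal R$-classes, and the positivity of $\mathsf d\big(z, \mathsf Z(a)\cap u\mathsf Z(R)\big)$ does not imply it. The same unjustified inference is used a second time for $b = \pi(g)$, so the deduction $\gcd(z,z')=1$ and the bound $\mathsf t(R) \le \mu(a) \le \mathsf c(R)$ both collapse.

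The route you dismiss as ``more bookkeeping'' is in fact the load-bearing part of the paper's argument, and even your sketch of it is incomplete. The paper chooses $a$ minimal with $\mathsf t\big(a,\mathcal A(R)\big)=\mathsf t(R)$, uses Lemma \ref{5.4}.2 to place $z$ in $\text{\rm Min}\big(\mathsf Z(uR)\big)$, and then proves a separate assertion ({\bf A} in the paper's proof) identifying $\text{\rm Min}\big(\mathsf Z(uR)\big)\setminus\{u\}$ with the factorizations occurring in relations of $\sigma$; only at that point is $a$ known to be a Betti element with $\mathsf Z(a)=\{x,y\}$ and $(x,y)\in\sigma$, so that the uniqueness part of Proposition \ref{4.8} applies. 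Assertion {\bf A} combined with Proposition \ref{3.3} also yields $\omega(R)=\max\{\max\{|x|,|y|\}\mid (x,y)\in\sigma\}$, which is what bounds $\mathsf d(z,z')\le\max\{|x|,|y|\}\le\omega(R)$; your remark that ``$|z|\le\omega(R)$ by Proposition \ref{3.3}'' controls only one of the two factorizations and is not by itself sufficient. So the missing idea is precisely the combination of the minimal choice of $a$, Lemma \ref{5.4}, and the characterization of $\text{\rm Min}\big(\mathsf Z(uR)\big)$ in terms of the generic presentation.
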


\smallskip
\noindent {\it Remark.} Since $\mathsf t (T) = \sup \{\mathsf t
(T_i) \mid i \in I\}$ (\cite[Proposition 1.6.8]{Ge-HK06a}), the
assumption is of course satisfied if all $T_i$ have generic
presentations.

\begin{proof}
By \cite[Theorem 1.2.3]{Ge-HK06a}, $T \subset S$ is an atomic
submonoid and $S = \mathcal F (P) \times T$. Note that neither $S$
nor $T$ are factorial because $T \ne T^{\times}$. Since
\[
\mathsf c (T_{i^*}) \le \mathsf c (T) = \mathsf c (S) \le \omega (S) \le \mathsf t
(S) = \sup \{ \mathsf t (T_i) \mid i \in I \} = \mathsf t (T_{i^*})
\,,
\]
it suffices to show that $\mathsf c (T_{i^*}) = \omega (T_{i^*}) =
\mathsf t (T_{i^*})$. Thus, after a change of notation, we may
assume that $S$ is reduced, not factorial and has a generic
presentation. Let $\sigma \subset \ \sim_S$ denote this generic
presentation of $S$. We start with the following assertion.

\begin{enumerate}
\item[{\bf A.}\,] For every $u \in \mathcal A (S)$ we have
\[
\text{Min} \bigl( \mathsf Z (uS) \bigr) = \{u\} \cup \{ x \in
\mathsf Z (S) \mid (x,y) \in \sigma \cup \sigma^{-1} \ \text{for
some} \ y \in \mathsf Z (uS) \} \,.
\]
\end{enumerate}

\noindent {\it Proof of \,{\bf A}}.\,  Since $u \in   \text{Min}
\bigl( \mathsf Z (uS) \bigr)$, we may focus on the elements
different from $u$. Let $x = \prod_{v \in \mathcal A (S)} v^{k_v}
\in \text{Min} \bigl( \mathsf Z (uS) \bigr) \setminus \{u\}$ with
$k_v \in \mathbb N_0$ for all $v \in \mathcal A (S)$, and set $a =
\pi (x) \in S$. Since $x \ne u$, it follows that $k_u = 0$. There
exists a $y \in \mathsf Z (a) \cap u \mathsf Z (S)$, and the pair
$(x,y)$ belongs to the congruence generated by $\sigma$. Hence there
exists $(x', y') \in \sigma \cup \sigma^{-1}$ with $x' \t x$. Since
$u \nmid x$, we get that $u \nmid x'$ and because $\sigma$ is
generic, it follows that $u \t y'$ and hence $y' \in \mathsf Z
(uS)$. Since $x \in \text{Min} \bigl( \mathsf Z (uS) \bigr)
\setminus \{u\}$, we infer that $x=x'$. The uniqueness property of a generic presentation implies that $\mathsf Z (a) = \{x', y'\}$. Thus we get
that $y=y'$ and hence $(x,y) = (x',y') \in \sigma$.

Conversely, let $x \in \mathsf Z (S)$ and $y \in \mathsf Z (uS)$
such that $(x,y) \in \sigma \cup \sigma^{-1}$. Then we clearly  have
$x \in \mathsf Z (uS)$, and assume to the contrary that it is not
minimal. Then there is an $x' = \prod_{v \in \mathcal A (S)}v^{k_v'}
\in \mathsf Z (uS)$, where $k_v' \in \mathbb N_0$ for all $v \in
\mathcal A (S)$, and  with $x' \t x$ and $x' \ne x$. Note that $k_u'
= 0$. Since $\sigma$ is generic, there exists a $y' = \prod_{v \in
\mathcal A (S)} v^{l_v}$, where all $l_v \in \mathbb N_0$, with $l_u
\ne 0$ and $\pi (x') = \pi (y')$. Since the pair $(x', y')$ is in
the congruence generated by $\sigma$, there exist $(x'', y'') \in
\sigma \cup \sigma^{-1}$ such that $x'' \t x'$. This implies that
$x'' \t x$ and $x'' \ne x$. This contradicts the fact, that
elements whose factorizations appear in a generic
presentation are not comparable (\cite[Corollary 6]{Ga-Oj11a}). \qed

\smallskip
Since $\sigma$ is a generic  presentation, for every $u \in \mathcal
A (S)$ and every $(x,y) \in \sigma$ we have $u \t x$ or $u \t y$. Thus {\bf A} and Proposition \ref{3.3} imply that
\[
\omega (S) = \max \{ \max \{|x|, |y|\} \mid (x,y) \in \sigma \} \,.
\]
Now the minimality property of a generic presentation (see Proposition \ref{4.8}), the above formula
for $\omega (S)$ together with Proposition \ref{4.6} imply that $\omega (S) = \mathsf c (S)$.

Since $\omega (S) \le \mathsf t (S)$, it remains to show that
converse inequality. Let $a \in S$ be minimal such that $\mathsf t \big(a, \mathcal A (S) \big) = \mathsf t (S)$, and let $u \in \mathcal A (S)$, $z \in
\mathsf Z (a)$ and $z' \in \mathsf Z (a) \cap u \mathsf Z (S)$ such
that
\[
\mathsf t (S) = \mathsf t \big(a, \mathcal A (S) \big) = \mathsf d (z, z')
\,.
\]
By Lemma \ref{5.4}, it follows that $z \in \text{Min} \big( \mathsf Z
(uS) \big)$. Thus, by {\bf A}, there exist $(x,y) \in \sigma \cup
\sigma^{-1}$ such that $y=z$. Therefore $x$ and $y$ are factorizations of
$a$, which appear in the unique presentation of $S$. This implies
that $\mathsf Z (a) = \{x,y\}$, and thus $z' = x$ and $\mathsf t (S)
= \mathsf d (z,z') \le \max \{|x|, |y|\} \le \omega (S)$.
\end{proof}

\medskip
Let $R$ be an integral domain. We denote by $R^{\bullet} = R
\setminus \{0\}$ its multiplicative monoid of non-zero elements, by
$\mathfrak X (R)$ the set of all minimal non-zero prime ideals of
$R$, by $\widehat R$ its complete integral closure, and by $(R \DP
\widehat R) = \{ f \in R \mid f \widehat R \subset R \}$ the
conductor of $R$ in $\widehat R$.

\medskip
\begin{corollary} \label{5.6}
Let $R$ be a weakly Krull domain,  $\mathfrak f = (R \DP \widehat
R)\ne \{0\}$,  $\mathcal P^* = \{ \mathfrak p \in \mathfrak X(R)
\mid \mathfrak p \supset \mathfrak f \}$ and $S = \mathcal I_v^*
(R)$ the monoid of $v$-invertible $v$-ideals equipped with
$v$-multiplication. If for every $\mathfrak p \in \mathcal P^*$, the
monoid $R_{\mathfrak p}{}^\bullet$ has a generic presentation, then
$\mathsf c (S) = \omega (S) = \mathsf t (S)$.
\end{corollary}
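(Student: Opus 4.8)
The plan is to reduce this corollary about weakly Krull domains to the abstract Theorem \ref{5.5} by identifying the right decomposition of a suitable submonoid $T$ of $S = \mathcal I_v^*(R)$. The key structural input is the classical fact (from the theory of weakly Krull domains, as developed in \cite{Ge-HK06a}, specifically the results on the $v$-ideal monoid of a weakly Krull domain with nonzero conductor) that $S = \mathcal I_v^*(R)$ admits a decomposition as a coproduct indexed by $\mathfrak X(R)$, namely $S \cong \mathcal F(\mathfrak X(R) \setminus \mathcal P^*) \times \prod_{\mathfrak p \in \mathcal P^*}^{*} (R_{\mathfrak p}{}^\bullet)_{\red}$, where the primes $\mathfrak p \notin \mathcal P^*$ (those not containing the conductor) contribute free (hence prime) factors to $S$, and each $\mathfrak p \in \mathcal P^*$ contributes a copy of the localization monoid $R_{\mathfrak p}{}^\bullet$ up to associates. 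First I would state this decomposition precisely, with the reference, so that the set $P$ of prime representatives of Theorem \ref{5.5} is exactly $\mathfrak X(R) \setminus \mathcal P^*$.

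Next I would set $T = \coprod_{i \in I} T_i$ with $I = \mathcal P^*$ and $T_{\mathfrak p} = (R_{\mathfrak p}{}^\bullet)_{\red}$; this is precisely the submonoid of $S$ consisting of the $v$-ideals divisible by no $\mathfrak p \notin \mathcal P^*$. Since $\mathfrak f \ne \{0\}$ and $R$ is weakly Krull but (to avoid trivialities) not Krull, the conductor is contained in at least one height-one prime, so $\mathcal P^* \ne \emptyset$ and at least one $T_{\mathfrak p}$ is nontrivial, giving $T \ne T^\times$. The hypothesis that each $R_{\mathfrak p}{}^\bullet$ has a generic presentation then says that \emph{every} $T_i$ has a generic presentation, so by the Remark following Theorem \ref{5.5} (using $\mathsf t(T) = \sup\{\mathsf t(T_i) \mid i \in I\}$ from \cite[Proposition 1.6.8]{Ge-HK06a}) the hypothesis of the theorem is satisfied with any choice of $i^*$ realizing the supremum. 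Applying Theorem \ref{5.5} yields $\mathsf c(S) = \omega(S) = \mathsf t(S)$.

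The main obstacle is making the identification $T_{\mathfrak p} = (R_{\mathfrak p}{}^\bullet)_{\red}$ airtight, i.e.\ being careful that the reduction to reduced monoids in Theorem \ref{5.5} matches the passage from $R_{\mathfrak p}{}^\bullet$ to its associated reduced monoid, and that ``having a generic presentation'' is insensitive to passing to the associated reduced monoid (which is immediate, since a presentation is by definition a congruence on $\mathsf Z(S) = \mathcal F(\mathcal A(S_{\red}))$, so it only ever sees $S_{\red}$). One small point to check is that the coproduct decomposition of $S$ really does put all the prime elements of $S$ into the free factor $\mathcal F(\mathfrak X(R) \setminus \mathcal P^*)$ and leaves $T$ \emph{prime-free}, so that $T$ equals the monoid ``of all $a \in S$ with $p \nmid a$ for all $p \in P$'' in the sense of Theorem \ref{5.5}; this follows because a $\mathfrak p \in \mathcal P^*$ with $R_{\mathfrak p}{}^\bullet$ admitting a generic presentation is in particular not factorial unless it is trivial, and a localization at a height-one prime of a non-Krull weakly Krull domain that contains the conductor is not a discrete valuation domain, so its monoid has no prime elements. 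Once these verifications are in place the corollary is an immediate specialization.
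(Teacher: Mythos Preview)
Your proposal is correct and follows essentially the same approach as the paper: invoke the structural decomposition $S \cong \mathcal F(\mathcal P) \times \prod_{\mathfrak p \in \mathcal P^*} (R_{\mathfrak p}{}^\bullet)_{\red}$ (the paper cites \cite[Theorem 3.7.1]{Ge-HK06a} for this) and then apply Theorem \ref{5.5}. Your additional verifications (that generic presentations pass to the reduced monoid, that $T$ is prime-free, that $T \ne T^{\times}$) are more than the paper spells out---its proof is two sentences---but they are routine and do not constitute a different route.
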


\begin{proof}
By \cite[Theorem 3.7.1]{Ge-HK06a}, the monoid $S$ is isomorphic to
$\mathcal F (P) \times T$, where
\[
\mathcal P = \{ \mathfrak p \in \mathfrak X(R) \mid \mathfrak p
\not\supset \mathfrak f \} \quad \text{and} \quad T =
\prod_{\mathfrak p \in \mathcal P^*} (R_{\mathfrak
p}{}^\bullet)_{\red}  \,.
\]
Thus the assertion follows from Theorem \ref{5.5}.
\end{proof}

\smallskip
Let all notations be as in Corollary \ref{5.6}. It is easy to point
out explicit examples where the assumptions hold (for details see \cite[Section 3.7]{Ge-HK06a}). Every
one-dimensional noetherian domain, in particular every order in a
Dedekind domain, is weakly Krull. Let $\mathfrak p \in \mathcal
P^*$. Then $R_{\mathfrak p}{}^\bullet$ is finitely primary, and
$R_{\mathfrak p}{}^\bullet$ is tame if and only if there exists
precisely one prime ideal $\widehat{\mathfrak p} \in \mathfrak X(
\widehat R)$ satisfying $\widehat{\mathfrak p} \cap R = \mathfrak
p$. Suppose this holds true, and set $H = (R_{\mathfrak
p}{}^\bullet)_{\red}$. Then $H \subset F = F^{\times} \times [p]$
where $p$ is a prime element of the factorial monoid $F$, and its
value monoid $\mathsf v_p (H) = \{\mathsf v_p (a) \mid a \in H \}
\subset (\mathbb N_0, +)$ is a numerical monoid. If $R$ is a
non-principal order in an algebraic number field, then $F^{\times}$
is finite.

\smallskip
\begin{corollary} \label{5.7}
Let $S$ be a numerical monoid with $\mathcal A (S) = \{ n_1,n_2,n_3
\}$ where $\gcd (n_1, n_2) = \gcd (n_1, n_3) = \gcd (n_2, n_3) = 1$.
Then $S$ has a generic presentation and  $\mathsf c (S) = \omega (S)
= \mathsf t (S)$. More precisely, if
\[
c_i=\min\{ k\in \mathbb N ~|~ kn_i\in \langle n_j,n_k\rangle\} \quad
\text{and} \quad c_in_i=r_{i,j}n_j+r_{i,k}n_k \,,
\]
where $\{i,j,k\}=\{1,2,3\}$ and   $r_{i, j} \in \mathbb N_0$, then $
\mathsf
c(S)=\max\{c_1,c_2,c_3,r_{12}+r_{1,3},r_{2,1}+r_{2,3},r_{3,1}+r_{3,2}\}$.
\end{corollary}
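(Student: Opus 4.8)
The plan is to establish that $S$ has a generic presentation, feed this into Theorem \ref{5.5}, and then compute $\mathsf c(S)$ directly from the shape of that presentation. The one non-elementary ingredient is the classical description of minimal presentations of three-generated numerical monoids, due to Herzog (see \cite{Ro-GS09}): such a monoid is either a complete intersection---and then two of its three generators share a common divisor $\geq 2$---or it admits a \emph{unique} minimal presentation
\[
\sigma = \{\rho_1,\rho_2,\rho_3\}, \qquad \rho_i = \bigl( c_i \boldsymbol{n_i},\ r_{i,j}\boldsymbol{n_j} + r_{i,k}\boldsymbol{n_k} \bigr), \quad \{i,j,k\} = \{1,2,3\},
\]
with $c_i \geq 2$ and $r_{i,j}, r_{i,k} \geq 1$; here the $c_i$ and $r_{i,j}$ are exactly the numbers in the statement, the representation $c_i n_i = r_{i,j}n_j + r_{i,k}n_k$ being uniquely determined. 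Since $\gcd(n_i, n_j) = 1$ for all $i \neq j$, the complete-intersection alternative is impossible, so $S$ has a minimal presentation of this second type.

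Next I would observe that each $\rho_i$ has full support: $c_i \boldsymbol{n_i}$ is supported on $\{n_i\}$ and $r_{i,j}\boldsymbol{n_j} + r_{i,k}\boldsymbol{n_k}$ on $\{n_j,n_k\}$ (using $r_{i,j}, r_{i,k} \geq 1$), so $\supp(xy) = \mathcal A(S)$ for every $(x,y) \in \sigma$; hence $\sigma$ is generic and $S$ has a generic presentation. As noted in Section \ref{2}, this forces $S$ to have no prime elements, so in the notation of Theorem \ref{5.5} we may take $P = \emptyset$ and $T = S$, with a one-element index set $I = \{i^*\}$ and $T_{i^*} = S$. Moreover $S$ has three atoms, each $> 1$, so $S$ is not factorial, i.e.\ $T \neq T^{\times}$; and $\mathsf t(T_{i^*}) = \mathsf t(T)$ holds trivially. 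Theorem \ref{5.5} then yields $\mathsf c(S) = \omega(S) = \mathsf t(S)$.

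Finally, to get the explicit value I would invoke the identity $\omega(S) = \max\{\max\{|x|,|y|\} \mid (x,y) \in \sigma\}$ proved inside Theorem \ref{5.5} for a reduced nonfactorial monoid with generic presentation $\sigma$. Since $|c_i\boldsymbol{n_i}| = c_i$ and $|r_{i,j}\boldsymbol{n_j} + r_{i,k}\boldsymbol{n_k}| = r_{i,j} + r_{i,k}$, taking the maximum over $\rho_1, \rho_2, \rho_3$ gives
\[
\mathsf c(S) = \omega(S) = \max\{ c_1, c_2, c_3,\ r_{1,2}+r_{1,3},\ r_{2,1}+r_{2,3},\ r_{3,1}+r_{3,2} \}.
\]
I do not expect a real obstacle: once Herzog's structure theorem is in hand, the rest is a bookkeeping check on supports and lengths plus a direct application of Theorem \ref{5.5}. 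The only point that needs a little care is that pairwise coprimality is precisely what is needed to exclude the complete-intersection case, where one of the minimal relations would be supported on only two atoms and genericity would fail.
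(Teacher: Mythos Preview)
Your proposal is correct and follows essentially the same approach as the paper: establish that $S$ has a generic presentation and then apply Theorem \ref{5.5}. The paper simply cites \cite[Lemma 10.18]{Ro-GS09} for the generic presentation and \cite[Exercise 8.23]{Ro-GS09} for the explicit formula, whereas you unpack both steps---deriving the presentation from Herzog's dichotomy (correctly noting that pairwise coprimality rules out the complete-intersection case) and reading off the formula from the identity $\omega(S)=\max\{\max\{|x|,|y|\}\mid (x,y)\in\sigma\}$ established inside the proof of Theorem \ref{5.5}.
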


\begin{proof}
$S$ has a generic presentation by \cite[Lemma 10.18]{Ro-GS09} and hence Theorem \ref{5.5} implies that $\mathsf c (S) =
\omega (S) = \mathsf t (S)$. The formula for $\mathsf c (S)$ stems
from \cite[Exercise 8.23]{Ro-GS09} (note that the integers $r_{i,j}$ are uniquely
determined).
\end{proof}

\smallskip
The catenary degree of   numerical monoids with embedding dimension
three has been also described (with a different approach) in
\cite{Ag-Ga09a}. Let $S$ be atomic. Obviously, the requirement that $S$ has a generic presentation (enforcing $\mathsf c (S) = \mathsf t (S)$)
is a strong assumption, and also the general philosophy in factorization theory confirms the idea that the equality of the catenary and the
tame degree should be an exceptional phenomenon (see also Corollary \ref{5.9}). On the other hand,  all types of numerical monoids studied so far
share this exceptional phenomenon. This contrast will become more clear in the following remark, where we also construct the first infinite family of numerical monoids whose catenary degrees are strictly smaller than the tame degrees.

\medskip
\begin{remarks} \label{5.8}~

\smallskip
1. Let  $S$ be a numerical monoid with $\mathcal A (S) =
\{n_1,\ldots,n_t\}$, where $t \in \mathbb N$ and $1 < n_1 < \ldots <
n_t$, and let $s \in S$. Consider a factorization $z=a_1\mathbf n_1+ \cdots +a_t\mathbf
n_t\in {\rm Min} \big(\mathsf Z (s+S) \big)$.
Then $z \in \mathsf Z (a)$ where $a=a_1n_1+\ldots+a_tn_t$, and  $a=s+u$ for
some $u \in S$.
Pick some $j\in [1, t]$ such that
$\mathbf n_j | z$.   The minimality of $z$ implies that $z-\mathbf n_j\not\in
\mathsf Z(s+S)$, and thus $a-s-n_j=u-n_j\not \in S$ whence $u \in
\text{\rm Ap} (S, n_j)$. Thus, if we want to compute the elements in ${\rm
Min}\big(\mathsf Z(s+S) \big)$ (which in view of Proposition \ref{3.3}
enables us to determine $\omega(S)$), we only have to find the
factorizations of the elements of the form $s+u$ with $u$ in the
Ap\'ery set of some atom.

We implemented this procedure in {\tt GAP} by using the {\tt
numericalsgps} package (see \cite{numericalsgps}). We did an
exhaustive search computing all numerical monoids with Frobenius
number up to 20. That makes 3515 numerical monoids, and   the only
monoids $S$ in this set fulfilling $\omega(S)<\textsf t(S)$ are
$\langle 5, 6, 9 \rangle$, $\langle 5, 8, 12 \rangle$ and $\langle
6, 8, 9 \rangle$.

\smallskip
2. The minimal presentations of the above three
numerical monoids are very similar. Playing around with the
Smith normal form of the matrix whose rows are the differences of
the relators of these monoids, one can find even wilder examples.
The monoid $S= \langle 19,46,391\rangle$ has
$\omega(S)=23<39=\mathsf t(S)$.

\smallskip
3. We present an infinite family of numerical monoids whose $\omega$-invariants
are strictly smaller than the tame degrees.  Let $q$ be a prime, $p_1, p_2 \in \mathbb N$ with $p_1<p_2$, $p_1+p_2=q$ and $\gcd(p_1,p_2)=1$,
and $k \in \mathbb N_{\ge 2} \setminus q \mathbb N$ such that $p_1k < q < p_2k$. We define
\[
S_k = \langle p_1k, \, q, \,p_2k \rangle \,,
\]
and set  $n_1=p_1k$, $n_2=q$, $n_3=p_2k$ and $c_i=\min\{ m \in \mathbb N \mid m n_i\in \langle
n_j,n_k\rangle\}$ with $\{i,j,k\}=\{1,2,3\}$ (note that $c_1, c_2$ and $c_3$ are as in Corollary \ref{5.7}).
\begin{enumerate}
\item[(a)] The Diophantine equation $qx+p_2ky=p_2kt$ has general solution $x=kt-p_2ks$, $y=-t+qs$, $s\in \mathbb Z$.
           The first $t$ for which $x$ and $y$ can be non-negative is $t=p_2$.
           This in particular means that $p_1k$ is not in $\langle
           q,p_2k\rangle$ and that $c_1=p_2$. In fact, $p_2n_1=p_1n_3$, and
           $(p_2+1)n_1=kn_2+(p_1-1)n_3$.

\item[(b)] Analogously one proves that $c_2=k$; $kn_2=n_1+n_3$.

\item[(c)] It is also easy to show that $c_3=p_1$: $p_1n_3=p_2n_1$. Moreover, $(p_1+1)n_3=(p_2-1)n_1+kn_2$.
\end{enumerate}
By using this information it easily follows that
\[
\begin{matrix}
{\rm Min}\big(\mathsf Z(n_1+S_k) \big)=\{\mathbf n_1, k\mathbf n_2, p_1\mathbf n_3\},\\
{\rm Min}\big(\mathsf Z(n_2+S_k) \big)=\{(p_2+1)\mathbf n_1, \mathbf n_1+\mathbf n_3, \mathbf n_2, (p_1+1)\mathbf n_3\},\\
{\rm Min} \big(\mathsf Z(n_3+S_k) \big)=\{p_2\mathbf n_1, k\mathbf n_2,
\mathbf n_3\}.
\end{matrix}
\]
Therefore Proposition \ref{3.3} implies that
\[
\omega(S_k)=\max\{k,p_2+1\} \,.
\]
Note that $\mathsf Z \big((p_1+1) n_3 \big)= \{(p_1+1) \mathbf n_3, (p_2-1)\mathbf n_1+k\mathbf n_2, p_2\mathbf n_1+\mathbf n_3\}$.
Thus, analyzing the factorizations of the elements in $\pi \big({\rm
Min}\big(\mathsf Z(n_i+S_k) \big) \big)$ for $i \in [1,3]$, and by using 1., we obtain that
\[
\mathsf t(S_k) = \max \big\{ \mathsf t \big( (p_2+1)e_1, \mathcal A (S_k) \big),
\mathsf t \big((p_1+1)e_3, \mathcal A (S_k) \big) \big\} = \max\{ p_2+1, k+p_2-1\}
= k+p_2-1  \,,
\]
which is strictly larger than $\omega(S_k)$. Furthermore,   if $k \ge p_2+1$, then
\[
1 <  \dfrac{\mathsf t(S_k) }{\omega(S_k)} = 1 + \frac{p_2-1}{k} \le 1 + \frac{p_2-1}{p_2+1} < 2 \,.
\]
\end{remarks}

\medskip
We end this section with a brief glance at Krull monoids. For them
the equivalence of the catenary and the tame degree is an even rarer
phenomenon than it is for numerical monoids. Let $S$ be a Krull
monoid with  class group $G$ and let $G_P \subset G$ denote the set of classes containing prime divisors. If $\mathsf D (G_P) < \infty$,
then $S$ is tame, and the converse holds---among others---if $S$ the multiplicative monoid of non-zero elements of a domain
(see \cite[Theorem 4.2]{Ge-Ka10a}). If $G$ is finite with  $|G| \ge 3$ and $G_P = G$, then \cite[Corollary
3.4.12]{Ge-HK06a} shows that
\[
\mathsf c (S) = \mathsf c (G) \le \mathsf D (G) = \omega (S) \le
\mathsf t (G) \le \mathsf t (S) \,,
\]
where the final inequality can be strict (\cite[Example
3.4.14]{Ge-HK06a}).

\medskip
\begin{corollary} \label{5.9}
Let $G$ be a finite abelian group with $|G| \ge 3$.
\begin{enumerate}
\item $\mathsf c (G) = \mathsf t (G)$ if and only if $G \in \{C_3, C_4, C_2^2, C_2^3\}$.

\smallskip
\item The monoid of zero-sum sequences $\mathcal B (G^{\bullet})$ has a
           generic  presentation if and only if \newline $G  \in \{ C_3, C_2^2 \}$.
\end{enumerate}
\end{corollary}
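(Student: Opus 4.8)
The plan is to reduce part~1 to a question about the Davenport constant by means of the chain of inequalities displayed just before the corollary, and then to deduce part~2 from part~1 through Theorem~\ref{5.5}. Write $\mathsf c(G)=\mathsf c\bigl(\mathcal B(G)\bigr)$ and $\mathsf t(G)=\mathsf t\bigl(\mathcal B(G)\bigr)$. Since $\mathcal B(G)=\mathcal F(\{0\})\times\mathcal B(G^\bullet)$ and a factorial direct factor changes none of the invariants involved, one may pass to the reduced, finitely generated monoid $S=\mathcal B(G^\bullet)$, which for $|G|\ge 3$ has no prime elements and satisfies $\omega(S)=\mathsf D(G)$. The cited chain then reads $\mathsf c(G)\le\mathsf D(G)=\omega(S)\le\mathsf t(G)$, so that $\mathsf c(G)=\mathsf t(G)$ is equivalent to $\mathsf c(G)=\mathsf D(G)=\mathsf t(G)$. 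Thus part~1 is the problem of classifying the finite abelian groups with $|G|\ge 3$ for which catenary degree, Davenport constant and tame degree of $\mathcal B(G)$ all coincide.

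For the implication ``$\Leftarrow$'' of part~1 I would treat the four groups in two batches. For $G\in\{C_3,C_2^2\}$ I will exhibit a \emph{generic} presentation of $S$ --- this is at the same time the nontrivial half of part~2. For $C_3=\Z/3\Z$ one has $\mathcal A\bigl(\mathcal B(C_3^\bullet)\bigr)=\{1^3,\,2^3,\,1\cdot 2\}$, and the relation $1^3\cdot 2^3=(1\cdot 2)^3$ is a minimal presentation whose single relator has full support. For $C_2^2$, with $e_1,e_2$ a basis and $e_3=e_1+e_2$, one has $\mathcal A\bigl(\mathcal B((C_2^2)^\bullet)\bigr)=\{e_1^2,e_2^2,e_3^2,e_1e_2e_3\}$, and the relation $e_1^2e_2^2e_3^2=(e_1e_2e_3)^2$ is a minimal presentation with full support. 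Theorem~\ref{5.5}, applied with $P=\emptyset$ (so that $T=T_{i^*}=S$), then yields $\mathsf c(G)=\omega(S)=\mathsf t(G)$. For $G\in\{C_4,C_2^3\}$ one has $\mathsf D(G)=4$, so the chain gives $\mathsf c(G)\le 4\le\mathsf t(G)$; I would verify $\mathsf c(G)\ge 4$ by exhibiting an element whose only two factorizations lie at distance~$4$ (namely $1^4\cdot 3^4=(1\cdot 3)^4$ in $\mathcal B(\Z/4\Z)$, and $e_1^2e_2^2e_3^2e_4^2=(e_1e_2e_3e_4)^2$ in $\mathcal B(C_2^3)$ with $e_4=e_1+e_2+e_3$), and $\mathsf t(G)\le 4$ by the finite computation of the tame degree in these two finitely generated monoids.

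For the implication ``$\Rightarrow$'' of part~1, suppose $|G|\ge 3$ and $\mathsf c(G)=\mathsf t(G)$, so $\mathsf c(G)=\mathsf D(G)$. I would first invoke the known classification of the finite abelian groups with $\mathsf c\bigl(\mathcal B(G)\bigr)=\mathsf D(G)$: these are exactly the cyclic groups and the elementary $2$-groups --- indeed $\mathsf c(C_n)=n=\mathsf D(C_n)$ for $n\ge 3$ (witnessed by $g^n(-g)^n=\bigl(g\cdot(-g)\bigr)^n$ for a generator $g$) and $\mathsf c(C_2^r)=r+1=\mathsf D(C_2^r)$ for $r\ge 2$ (witnessed by $U^2=e_0^2e_1^2\cdots e_r^2$ for the atom $U=e_0e_1\cdots e_r$), while $\mathsf c(G)<\mathsf D(G)$ for every other $G$, see \cite{Ge-HK06a} and the literature cited there. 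It then remains to exclude cyclic groups of order $\ge 5$ and elementary $2$-groups of rank $\ge 4$, which follows from the known lower bounds $\mathsf t(C_n)>n=\mathsf D(C_n)$ for $n\ge 5$ and $\mathsf t(C_2^r)>r+1=\mathsf D(C_2^r)$ for $r\ge 4$, in contradiction with $\mathsf t(G)=\mathsf D(G)$. Hence $G\in\{C_3,C_4,C_2^2,C_2^3\}$. This last step I expect to be the main obstacle: it relies on nontrivial external input, namely the exact values (or sufficiently sharp lower bounds) of $\mathsf c$ and of $\mathsf t$ on cyclic groups and on elementary $2$-groups, to be drawn from \cite{Ge-HK06a} and \cite{Ge-Ka10a}.

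Finally, for part~2: if $S=\mathcal B(G^\bullet)$ has a generic presentation, then $\mathsf c(G)=\mathsf t(G)$ by Theorem~\ref{5.5}, hence $G\in\{C_3,C_4,C_2^2,C_2^3\}$ by part~1, so only these four cases remain to be decided. That $C_3$ and $C_2^2$ qualify was shown above. For $C_4$, the element $1^2\cdot 2^2\cdot 3^2$ has precisely the two factorizations $(1^2\cdot 2)(3^2\cdot 2)$ and $(2^2)(1\cdot 3)^2$; these share no atom, hence lie in distinct $\mathcal R$-classes. By the argument in the proof of Proposition~\ref{4.8}, a generic presentation of $S$ would force, for every element with at least two factorizations, the union of the supports of its $\mathcal R$-classes to be all of $\mathcal A(S)$; here that union equals $\{1^2\cdot 2,\,3^2\cdot 2,\,2^2,\,1\cdot 3\}$, which omits the atoms $1^4$ and $3^4$, so $\mathcal B(C_4^\bullet)$ admits no generic presentation. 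For $C_2^3$, the element $e_1^2e_2^2e_3^2e_4^2$ (with $e_4=e_1+e_2+e_3$) likewise has precisely the two factorizations $(e_1^2)(e_2^2)(e_3^2)(e_4^2)$ and $(e_1e_2e_3e_4)^2$, the union of whose supports is $\{e_1^2,e_2^2,e_3^2,e_4^2,e_1e_2e_3e_4\}$ --- a proper subset of $\mathcal A\bigl(\mathcal B((C_2^3)^\bullet)\bigr)$, which also contains $(e_1+e_2)^2$ --- so by the same reasoning $\mathcal B((C_2^3)^\bullet)$ admits no generic presentation. Therefore a generic presentation exists precisely when $G\in\{C_3,C_2^2\}$.
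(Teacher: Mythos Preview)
Your overall strategy for part~2 is exactly the paper's: use part~1 together with Theorem~\ref{5.5} to reduce to the four groups, exhibit generic presentations for $C_3$ and $C_2^2$, and for $C_4$ and $C_2^3$ produce a Betti element whose unique pair of factorizations does not have full support, so that this relation must lie in every minimal presentation and blocks genericity. The paper uses the witnesses $(-g)^4g^4=(g(-g))^4$ for $C_4$ and $e_1^2e_2^2(e_1{+}e_2)^2=(e_1e_2(e_1{+}e_2))^2$ for $C_2^3$; your witnesses are different but work for the same reason. One remark on phrasing: the statement you invoke from Proposition~\ref{4.8} should be restricted to elements with at least two $\mathcal R$-classes (Betti elements), not merely ``at least two factorizations''. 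In your concrete examples the two factorizations are coprime, so this is harmless, but the general claim as you wrote it is false.

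For part~1 the paper does something much simpler than you: it just cites \cite[Corollary~6.5.7]{Ge-HK06a}. Your plan to rederive the classification via the chain $\mathsf c(G)\le\mathsf D(G)\le\mathsf t(G)$ is a legitimate alternative, but be aware that the intermediate step you lean on---``$\mathsf c(G)=\mathsf D(G)$ exactly for cyclic groups and elementary $2$-groups''---is itself a nontrivial classification that is not lighter to cite than the target statement, and the subsequent lower bounds $\mathsf t(C_n)>n$ for $n\ge 5$ and $\mathsf t(C_2^r)>r+1$ for $r\ge 4$ also require specific input from \cite{Ge-HK06a}. So your route is correct but not more economical; you have essentially unpacked the proof of \cite[Corollary~6.5.7]{Ge-HK06a} rather than avoided it.
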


\begin{proof}
1. See \cite[Corollary 6.5.7]{Ge-HK06a}.

\smallskip
2.  By 1. and by Theorem \ref{5.5}, we have to check only the groups
in $\{C_3, C_4, C_2^2, C_2^3\}$. We recall the following facts (for details see \cite[Chapter 9]{Ro-GS99}). If $\sigma$ is a minimal presentation for $\mathcal B (G^{\bullet})$ and $(a,b)\in \sigma$,
then $a$ and $b$ are in different $\mathcal R$-classes. In fact, any
minimal presentation is constructed by choosing pairs of elements in
different $\mathcal R$-classes of elements with more than one $\mathcal
R$-class.

If $G = C_3 = \{0, g, 2g\}$, then $\mathcal A \big( \mathcal B (G^{\bullet}) \big) = \{  U_1 = g^3, U_2 = (2g)^3, V = g(2g) \}$ and
$\sigma = \{ (U_1U_2, V^3)\} \subset \ \sim_{\mathcal B (G^{\bullet})}$ is a generic presentation.

If $G = C_2 \oplus C_2 = \{0, e_1, e_2, e_1+e_2\}$, then $\mathcal A \big( \mathcal B (G^{\bullet}) \big) = \{  U_1= e_1^2, U_2 = e_2^2, U_3 = (e_1+e_2)^2, V = e_1e_2(e_1+e_2) \}$
and $\sigma = \{(U_1U_2U_3, V^2) \} \subset \ \sim_{\mathcal B (G^{\bullet})}$ is a generic presentation.

Let $G = C_4 = \{0, g, 2g, -g\}$. Then $\mathcal A \big( \mathcal B (G^{\bullet}) \big) = \{ U_1 = g^4, U_2 = (2g)^2, U_3 = (-g)^4, U_4 = (-g)g, U_5 = g^2 (2g), U_6 = ((2g)(-g)^2 \}$
and $(U_1 U_3, U_4^4) \in \ \sim_{\mathcal B (G^{\bullet})}$.
Since $\mathsf Z (U_4^4) = \{ U_1U_3, U_4^4\}$, the set of factorizations of $U_4^4$
has only two
$\mathcal R$-classes, where each consists of precisely one factorization. Thus $(U_4^4,U_1U_3) \in \sigma
\cup \sigma^{-1}$, for every  minimal presentation $\sigma$. Obviously, this
pair does not have full support, and hence $\mathcal B (G^{\bullet})$
has no generic presentation.

Let $G = C_2 \oplus C_2 \oplus C_2 = \{0, e_1, e_2, e_3, e_1+e_2, e_1+e_3, e_2+e_3, e_1+e_2+e_3 \}$. Then $U_1= e_1^2, U_2 = e_2^2, U_3 = (e_1+e_2)^2, V = e_1e_2(e_1+e_2) \in \mathcal A \big( \mathcal B (G^{\bullet}) \big)$ and $(U_1U_2, V^3) \in \ \sim_{\mathcal B (G^{\bullet})}$. Since every  minimal presentation $\sigma$ contains the relation $(U_1U_2, V^3)$ which does not have full support, it follows that $\mathcal B (G^{\bullet})$
has no generic presentation.
\end{proof}

\bigskip
\section{Unions of sets of lengths} \label{6}
\bigskip

\medskip
\begin{definition} \label{6.1}
Let  $S$  be  atomic  and  $k \in \mathbb N$.
\begin{enumerate}
\item If \ $S = S^{\times}$,  we set \ $\mathcal V_k(S) = \{k\}$. If \ $S \ne S^{\times}$,
      let \ $\mathcal V_k (S)$ \ denote the
      set of all \ $m \in \mathbb N$ \ \ for which there exist \ $u_1,
      \ldots, u_k, v_1, \ldots, v_m \in \mathcal A (S)$ with
      $u_1 \cdot \ldots \cdot u_k = v_1 \cdot \ldots \cdot v_m$.

\smallskip
\item We define
      \[
      \rho_k (S) = \sup \mathcal V_k (S) \in \mathbb N \cup \{\infty \}
      \quad \text{and} \quad \lambda_k (S) = \min \mathcal V_k (S) \in
      [1,k] \,.
      \]

\smallskip
\item For $a \in S$, $\rho (a) = \rho \big(\mathsf L (a) \big)$ is called the {\it elasticity} of $a$, and
      \[
      \rho (S) = \sup \{ \rho (L) \mid L \in \mathcal L (S) \} \in \mathbb R_{\ge 1} \cup \{\infty\}
      \]
      is called the {\it elasticity} of $S$.      We say that $S$ has \ {\it finite accepted elasticity } \ if there exists
      some $a \in S$ with $\rho(a)=\rho(S) < \infty$.
\end{enumerate}
\end{definition}

\medskip
Let  $k, l \in \mathbb N$. Then $k \in \mathcal V_k (S)$, $\mathcal
V_k (S) + \mathcal V_l (S) \subset \mathcal V_{k+l} (S)$,
\[
\lambda_{k+l} (S) \le \lambda_k (S) + \lambda_l (S) \le  k + l \le
\rho_k (S) + \rho_l (S) \le \rho_{k+l} (S) \,, \] and
\[
\rho (S) = \sup \Bigl\{ \frac{\rho_{k} (S)}{k} \; \Bigm| \; k \in \N
\Bigr\} = \lim_{k \to \infty}\frac{\rho_k(S)}{k} \quad \text{and}
\quad \frac{1}{\rho (S)} = \inf \Bigl\{ \frac{\lambda_{k} (S)}{k} \;
\Bigm| \; k \in \N \Bigr\}
 = \lim_{k \to \infty} \frac{\lambda_k (S)}{k}
 \,,
\]
(see \cite[Proposition 1.4.2]{Ge-HK06a} and \cite[Section
3]{Ga-Ge09b}). Moreover, if \ $S \ne S^{\times}$, then
\[
      \mathcal V_k (S) \ = \ \bigcup_{k \in L, L \in \mathcal L (S)}
      L
      \]
is the union of all sets of lengths containing $k$. These unions
were introduced by S.T. Chapman and W.W. Smith in \cite{Ch-Sm90a}.
 It was proved only
recently that a $v$-noetherian monoid, which satisfies $\rho_k (S) <
\infty$ for all $k \in \mathbb N$, is locally tame (see
\cite[Corollary 4.3]{Ge-Ha08a}). For Krull monoids with finite class
group, the invariants $\rho_k (S)$ are studied in \cite{Ge-Gr-Yu12}.

The first part of this section  is devoted to the invariant $\rho_k
(S)$ in a more general setting, and after that we study the structure of the unions of sets of lengths for numerical monoids.

\medskip
\begin{proposition} \label{6.2}
Let $S$ be  atomic with $S \ne S^{\times}$.
\begin{enumerate}
\item If $S$ has finite accepted elasticity, then the sets
      \[
      M = \{ k \in \mathbb N \mid \frac{\rho_k (S)}{k} = \rho (S) \} \cup
      \{0\} \quad \text{and} \quad M' = \{ k \in \mathbb N \mid \frac{\lambda_k (S)}{k} = \frac{1}{\rho (S)} \} \cup
      \{0\}
      \]
      are submonoids of
      $(\mathbb N_0, +)$, distinct from $\{0\}$.

\smallskip
\item Let $a \in S$,  $z = u_1 \cdot \ldots \cdot u_l \in
      \mathsf Z (a)$ and $z' = v_1 \cdot \ldots \cdot v_{\rho} \in \mathsf
      Z (a)$ where $l \in \mathbb N$, $\rho = \rho_l (S)$ and $u_1 \ldots,
      u_l, v_1, \ldots, v_{\rho} \in \mathcal A (S_{\red})$. If there is
      no $k \in [1, l-1]$ such that $\rho_k (S) + \rho_{l-k}(S) = \rho_l
      (S)$, then $(z, z') \in \mathcal A (\sim_S)$.
\end{enumerate}
\end{proposition}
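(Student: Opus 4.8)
The plan is to prove the two parts separately, both by direct verification using the subadditivity and superadditivity relations recorded just before the statement. For part (1), one must show that $M$ and $M'$ are closed under addition and are not the trivial monoid. Closure is immediate from the superadditivity $\rho_k(S) + \rho_l(S) \le \rho_{k+l}(S)$ together with the general bound $\rho_k(S)/k \le \rho(S)$: if $k, l \in M$, then $\rho_{k+l}(S) \ge \rho_k(S) + \rho_l(S) = k\rho(S) + l\rho(S) = (k+l)\rho(S)$, and since the reverse inequality always holds, $k+l \in M$. The argument for $M'$ is dual, using $\lambda_{k+l}(S) \le \lambda_k(S) + \lambda_l(S)$ and $\lambda_k(S)/k \ge 1/\rho(S)$. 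For nontriviality, I would use the hypothesis of finite accepted elasticity: there is an $a \in S$ with $\rho(a) = \rho(S) < \infty$; picking $z, z' \in \mathsf Z(a)$ with $|z| = \min \mathsf L(a) =: k$ and $|z'| = \max \mathsf L(a)$, one gets $\rho_k(S) \ge |z'| = k\rho(S)$, hence $k \in M$ with $k \ge 1$. For $M'$, the same factorization with $l := \max \mathsf L(a)$ gives $\lambda_l(S) \le k = l/\rho(S)$, so $l \in M'$; one should note $l \ge 1$ since $S \ne S^\times$.

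For part (2), the goal is to show that the pair $(z,z')$ built from the shortest and longest factorizations of some $a$ is an atom of the monoid of relations $\sim_S$, under the hypothesis that $l$ cannot be split as $l = k + (l-k)$ with $\rho_k(S) + \rho_{l-k}(S) = \rho_l(S)$. The key observation is the meaning of factoring $(z,z')$ in $\sim_S$: if $(z,z') = (z_1,z_1')(z_2,z_2')$ with both factors in $\sim_S$ nontrivial, then writing $z = z_1 z_2$ and $z' = z_1' z_2'$ with $\pi(z_1) = \pi(z_1')$ and $\pi(z_2) = \pi(z_2')$, we set $k = |z_1| \in [1, l-1]$ (it lies in this range because both factors are nonunits, so $1 \le |z_1| \le l-1$), and then $|z_1'| \in \mathcal V_k(S)$ while $|z_2'| \in \mathcal V_{l-k}(S)$, so $|z_1'| \le \rho_k(S)$ and $|z_2'| \le \rho_{l-k}(S)$. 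But $|z_1'| + |z_2'| = |z'| = \rho = \rho_l(S)$, which forces $\rho_k(S) + \rho_{l-k}(S) \ge \rho_l(S)$; combined with the always-valid reverse inequality this gives $\rho_k(S) + \rho_{l-k}(S) = \rho_l(S)$ for some $k \in [1, l-1]$, contradicting the hypothesis. Hence no such nontrivial factorization exists and $(z,z') \in \mathcal A(\sim_S)$.

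The main obstacle — really just a point requiring care rather than a genuine difficulty — is handling the bookkeeping around units and the possibility $|z_1| = 0$ or $|z_1| = l$: one must be sure that a \emph{nontrivial} factorization in $\sim_S$ genuinely produces an index $k$ strictly between $1$ and $l-1$. Since we have reduced to $S$ reduced (or may pass to $S_{\red}$ throughout, as $\sim_S$ is unchanged), an atom of $\sim_S$ is a non-identity element with no nontrivial factorization, and $(z_i, z_i') \ne (1,1)$ forces $z_i \ne 1$ and hence $|z_i| \ge 1$; applying this to both factors gives $1 \le |z_1| \le l - 1$ as needed. One should also double-check the degenerate case $l = 1$: then $[1, l-1] = \emptyset$, the hypothesis is vacuously satisfied, and indeed $(z,z')$ with $|z| = 1$ is trivially an atom (or equal to the identity, in which case the statement is read accordingly) — so it is cleanest to observe $\rho = \rho_1(S) \ge 1$ and note that if $z = z'$ the pair is the identity, otherwise any factorization of $(z,z')$ would split $z$, which has length $1$, impossibly. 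I would state the argument for $l \ge 2$ and dispatch $l = 1$ in a sentence.
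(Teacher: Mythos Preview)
Your proposal is correct and follows essentially the same approach as the paper's proof: closure of $M$ and $M'$ via super- and subadditivity, nontriviality from the accepted-elasticity witness (the paper obtains an element of $M'$ slightly differently, by first picking $l \in M$ and showing $\rho_l(S) \in M'$, but your direct use of $\max \mathsf L(a)$ works just as well), and the contrapositive argument for part (2) that splits a nontrivial proper divisor of $(z,z')$ in $\sim_S$ to produce a forbidden $k$. One small slip in your edge-case discussion: for $l=1$ one has $\rho_1(S)=1$ and hence $z=z'$ is a single atom $u$, but $(u,u)$ is an \emph{atom} of $\sim_S$, not the identity $(1,1)$; this is harmless, and the paper does not even treat this case separately.
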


\begin{proof}
We may suppose that $S$ is reduced.

\smallskip
1. Suppose that $S$ has finite accepted elasticity. First we
consider the set $M$. By definition, there is an $a \in S$ such that
$\rho (S) = \rho (a)$. If $k = \min \mathsf L (a)$ and $\rho = \max
\mathsf L (a)$, then
\[
\rho (S) = \frac{\rho}{k} \le \frac{\rho_k (S)}{k} \le \rho (S) \,,
\]
and hence $k \in M$. Let $i \in [1,2]$ and $k_i \in M$. Since
$(k_1+k_2)\rho (S) = \rho_{k_1}(S) + \rho_{k_2}(S) \le
\rho_{k_1+k_2}(S)$, it follows that
\[
\rho (S) \ge \frac{\rho_{k_1+k_2}(S)}{k_1+k_2} \ge
\frac{\rho_{k_1}(S) + \rho_{k_2}(S)}{k_1+k_2} = \rho (S) \,.
\]
Thus equality holds, and $k_1+k_2 \in M$. To verify the assertion on
$M'$, we choose an $l \in \mathbb N$ such that $\rho_l (S)/l = \rho
(S)$. Then $\lambda_{\rho_l (S)} (S) \le l$, and since
\[
\frac{1}{\rho (S)} \le \frac{\lambda_{\rho_l (S)}(S)}{\rho_l (S)}
\le \frac{l}{\rho_l (S)} = \frac{1}{\rho (S)} \,,
\]
it follows that $\rho_l (S) \in M'$. Let $i \in [1,2]$ and $k_i \in
M'$. Since $(k_1+k_2)/\rho (S) = \lambda_{k_1}(S) + \lambda_{k_2}
(S) \ge \lambda_{k_1+k_2} (S)$, it follows that
\[
\frac{1}{\rho (S)} \le \frac{\lambda_{k_1+k_2}(S)}{k_1+k_2} \le
\frac{\lambda_{k_1}(S) + \lambda_{k_2}(S)}{k_1+k_2} = \frac{1}{\rho
(S)} \,.
\]
Thus equality holds and $k_1+k_2 \in M'$.

\smallskip
2. Assume to the contrary that $(z, z') \notin \mathcal A (\sim_S)$.
Then there exists $(x, x') \in \ \sim_S$ such that $(x,x') \t (z, z')$
with $1 \ne (x,x') \ne (z, z')$. After renumbering if necessary we may
suppose that $x = u_1 \cdot \ldots \cdot u_k$ and $x' = v_1 \cdot
\ldots \cdot v_{\psi}$ where $k \in [1, l-1]$ and $\psi \in [1,
\rho-1]$. Then $u_{k+1} \cdot \ldots \cdot u_l = v_{\psi+1} \cdot
\ldots \cdot v_{\rho}$ and
\[
\rho_l (S) = \rho = \psi + (\rho-\psi) \le \rho_k (S) +
\rho_{l-k}(S) \le \rho_l (S) \,,
\]
a contradiction.
\end{proof}

\medskip
\begin{corollary} \label{6.3}
Let $S$ be a numerical monoid with $\mathcal A (S) = \{n_1, \ldots,
n_t\}$ where $t \in \mathbb N$ and $1 < n_1 < \nolinebreak \ldots < \nolinebreak n_t$.
\begin{enumerate}
\item Then
\[
\rho (S) = \frac{n_t}{n_1} \quad \text{and} \quad \min \Delta (S) =
\gcd ( n_2-n_1, \ldots, n_t - n_{t-1} ) \,.
\]

\smallskip
\item $\{ k \in \mathbb N \mid \frac{\rho_k (S)}{k} = \rho (S) \} \cup \{0\} = \frac{\lcm(n_1, n_t)}{n_t} \mathbb N_0$.

\smallskip
\item $\{ k \in \mathbb N \mid \frac{\lambda_k (S)}{k} = \frac{1}{\rho (S)} \} \cup \{0\} = \frac{\lcm(n_1, n_t)}{n_1} \mathbb N_0$.
\end{enumerate}
\end{corollary}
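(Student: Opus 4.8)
The plan is to establish the elasticity and the set of distances in part~(1) directly, and then to derive parts~(2) and~(3) from Proposition~\ref{6.2}.1 by computing the submonoids $M$ and $M'$. Throughout I set $g=\gcd(n_1,n_t)$ and $\ell=\lcm(n_1,n_t)=n_1n_t/g$, so that $\ell/n_1=n_t/g$, $\ell/n_t=n_1/g$ and $\gcd(n_1/g,n_t/g)=1$.

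For part~(1), I would first note that every $z\in\mathsf Z(a)$ satisfies $n_1|z|\le a\le n_t|z|$, so $\rho\big(\mathsf L(a)\big)\le n_t/n_1$ for each $a$, while for $a=\ell$ the factorizations $(n_t/g)\boldsymbol{n_1}$ and $(n_1/g)\boldsymbol{n_t}$ lie in $\mathsf Z(\ell)$ and have lengths $n_t/g$ and $n_1/g$; hence $\rho(S)=n_t/n_1$ and this elasticity is accepted. For $\min\Delta(S)$ put $d=\gcd(n_2-n_1,\dots,n_t-n_{t-1})=\gcd\{n_i-n_1\mid i\in[2,t]\}$ and $m_i=(n_i-n_1)/d$; then $\gcd(m_2,\dots,m_t)=1$, and $\gcd(d,n_1)=1$ because $\gcd(n_1,\dots,n_t)=1$. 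For two factorizations $z=\sum k_i\boldsymbol{n_i}$, $z'=\sum k_i'\boldsymbol{n_i}$ of the same element, the identity $\sum_i(k_i-k_i')(n_i-n_1)=-n_1(|z|-|z'|)$ gives $d\mid|z|-|z'|$, so $d$ divides every element of $\Delta(S)$ and every difference of two lengths of a fixed element. Conversely I would pick integers $v_2,\dots,v_t$ with $\sum_{i=2}^tv_im_i=-n_1$ (available since $\gcd(m_2,\dots,m_t)=1$), set $v_1=d-\sum_{i=2}^tv_i$ so that $\sum_iv_in_i=0$, and observe that $z=\sum_i\max(v_i,0)\boldsymbol{n_i}$ and $z'=\sum_i\max(-v_i,0)\boldsymbol{n_i}$ are factorizations of a common element $a$ with $|z|-|z'|=\sum_iv_i=d$; since $d$ divides all length differences of $a$, no length lies strictly between $|z'|$ and $|z|$, so $d\in\Delta(S)$, and with the divisibility already shown, $\min\Delta(S)=d$.

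For parts~(2) and~(3), since the elasticity is accepted, Proposition~\ref{6.2}.1 shows $M$ and $M'$ are submonoids of $(\N_0,+)$ different from $\{0\}$, so it suffices to determine them. For $M$: the estimate above gives $\rho_{n_1/g}(S)\le(n_1/g)\rho(S)=n_t/g$, while the identity $(n_t/g)n_1=\ell=(n_1/g)n_t$ shows $n_t/g\in\mathcal V_{n_1/g}(S)$, hence $\rho_{n_1/g}(S)=(n_1/g)\rho(S)$ and $n_1/g\in M$; as $M$ is a monoid, $(n_1/g)\N_0\subset M$. For the reverse inclusion, if $k\in M$ with $k\ge1$ then $\rho_k(S)=kn_t/n_1\in\N$, so $n_1\mid kn_t$, i.e.\ $(n_1/g)\mid k(n_t/g)$, and coprimality forces $(n_1/g)\mid k$; thus $M=(n_1/g)\N_0=\frac{\lcm(n_1,n_t)}{n_t}\N_0$. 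Part~(3) runs symmetrically with $\lambda_k$ and $1/\rho(S)$ in place of $\rho_k$ and $\rho(S)$: the same identity gives $n_1/g\in\mathcal V_{n_t/g}(S)$, hence $\lambda_{n_t/g}(S)\le n_1/g$, while $\lambda_{n_t/g}(S)\ge(n_t/g)/\rho(S)=n_1/g$, so $n_t/g\in M'$ and $(n_t/g)\N_0\subset M'$; and if $k\in M'$ with $k\ge1$ then $\lambda_k(S)=kn_1/n_t\in\N$ forces $n_t\mid kn_1$, hence $(n_t/g)\mid k$, giving $M'=(n_t/g)\N_0=\frac{\lcm(n_1,n_t)}{n_1}\N_0$.

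The one delicate step is the ``$d$ is attained'' direction in part~(1): the Bézout-type construction of two factorizations of a common element whose lengths differ by exactly $d$, combined with the remark that divisibility of all length differences by $d$ makes those two lengths automatically adjacent. Everything else is bookkeeping, once one records the routine fact that for a numerical monoid $\rho_k(S)$ and $\lambda_k(S)$ are finite — hence integer-valued — so that the divisibility arguments in parts~(2) and~(3) are meaningful.
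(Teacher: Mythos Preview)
Your proof is correct. For part~(1) the paper simply cites \cite[Theorem 2.1]{Ch-Ho-Mo06} and \cite[Proposition 2.9]{B-C-K-R06}, so your direct argument just fills in what is delegated there. For parts~(2) and~(3), however, you take a genuinely different route. The paper does not invoke Proposition~\ref{6.2}.1 at all: for the inclusion $\supset$ it picks an arbitrary multiple $a$ of $\lcm(n_1,n_t)$ and verifies directly that $a/n_t$ (resp.\ $a/n_1$) lies in $M$ (resp.\ $M'$); for the inclusion $\subset$ it chooses, for each $k\in M$, a witness $a\in\mathcal V_k(S)$ realizing $\rho_k(S)=\max\mathsf L(a)$, and then squeezes the chain $a/n_t\le\min\mathsf L(a)\le k$ and $\rho_k(S)=\max\mathsf L(a)\le a/n_1$ against $\rho(a)\le n_t/n_1$ to force $n_1\mid a$ and $n_t\mid a$, whence $k=a/n_t\in(\lcm(n_1,n_t)/n_t)\N_0$. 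Your argument is shorter: Proposition~\ref{6.2}.1 reduces the $\supset$ direction to exhibiting a single generator, and for $\subset$ you bypass the witness element entirely by noting that $\rho_k(S)=k n_t/n_1$ must be an integer, so coprimality of $n_1/g$ and $n_t/g$ forces $(n_1/g)\mid k$. The paper's approach buys a little extra information (it locates, for each $k\in M$, an explicit element $a$ with $\rho(a)=\rho(S)$ and $\min\mathsf L(a)=k$), while yours uses the corollary structure more efficiently and avoids tracking elements of $S$ altogether.
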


\begin{proof}
1. See \cite[Theorem 2.1]{Ch-Ho-Mo06} and \cite[Proposition
2.9]{B-C-K-R06}.

\smallskip
2. Let $a \in \mathbb N$ be a multiple of $\lcm (n_1, n_t)$. We show
that $a/n_t$ is in the set on the left hand side. We have
\[
a = \frac{a}{n_1} \boldsymbol{n_1} = \frac{a}{n_t} \boldsymbol{n_t},
\quad \min \mathsf L (a) \le \frac{a}{n_t}, \quad \max \mathsf L (a)
\ge \frac{a}{n_1}
\]
and
\[
\frac{n_t}{n_1} = \rho (S) \ge \rho (a) = \frac{\max \mathsf L
(a)}{\min \mathsf L (a)} \ge \frac{n_t}{n_1} \,.
\]
This shows that $\min \mathsf L (a) = a/n_t$, $\max \mathsf L (a) =
a/n_1$ and
\[
\frac{n_t}{n_1} = \rho (a) \le \frac{\rho_{\min \mathsf L
(a)}(S)}{\min \mathsf L (a)} \le \rho (S) = \frac{n_t}{n_1} \,.
\]
Thus equality holds and $\min \mathsf L (a) = a/n_t$ has the
required property.

Conversely, let $k \in \mathbb N$ with $\rho_k (S)/k = \rho (S) =
n_t/n_1$. We choose $a \in \mathcal V_k (S)$ with $\max \mathsf L
(a) = \rho_k (S)$. Then $\min \mathsf L (a) \le k$ and
\[
\frac{n_t}{n_1} = \frac{\rho_k (S)}{k} \le \frac{\max \mathsf L
(a)}{\min \mathsf L (a)} = \rho (a) \le \frac{n_t}{n_1}
\]
implies that $\min \mathsf L (a) = k$. Since $a/n_t \le \min \mathsf
L (a)$, $\max \mathsf L (a) \le a/n_1$ and
\[
\frac{n_t}{n_1} = \rho (a) \le \frac{a/n_1}{a/n_t} = \frac{n_t}{n_1}
\,,
\]
it follows that $n_1 \t a$ and $n_t \t a$. Therefore $\lcm (n_1,
n_t) \t a$ and
\[
k = \min \mathsf L (a) = \frac{a}{n_t} \in \frac{\lcm (n_1,
n_t)}{n_t} \mathbb N_0 \,.
\]

\smallskip
3. Let $a \in \mathbb N$ be a multiple of $\lcm (n_1, n_t)$. We show
that $a/n_1$ is in the set on the left hand side which runs along
the lines of 2. Conversely, let $k \in \mathbb N$ with $\lambda_k
(S)/k = 1/\rho (S) = n_1/n_t$. We choose $a \in \mathcal V_k (S)$
with $\min \mathsf L (a) = \lambda_k (S)$. Again arguing as in 2.,
we infer that
\[
k = \max \mathsf L (a) = \frac{a}{n_1} \in \frac{\lcm (n_1,
n_t)}{n_1} \mathbb N_0 \,. \qedhere
\]
\end{proof}

\medskip
\begin{corollary} \label{6.4}
Let  $S$  be a reduced Krull monoid, $F = \mathcal F (P)$ a free monoid such that $S \subset F$ is a saturated and cofinal submonoid,
$G = F/S$ and $G_P = \{ p \, \mathsf q (S) \mid p \in P\} \subset G$ the set of classes containing prime divisors. Suppose that $G_P = -G_P$ and that $\mathsf D (G_P) < \infty$.
\begin{enumerate}
\item We have $\rho (S) = \mathsf D (G_P)/2$ and $2 \mathbb N \subset \{ k \in \mathbb N \mid \frac{\rho_k (S)}{k} = \rho (S) \}$.

\smallskip
\item Let $m \in \N$ be minimal such
      that
      \[
      \rho_{2m+1}(G_P) - m \mathsf D (G_P) = \max \{ \rho_{2k+1}(G_P) - k
      \mathsf D (G_P) \mid k \in \N \} \,.
      \]
      Then $\rho_{2m+1} (S) \le {\mathsf a \big( \mathcal B (G_P) \big) }$.
\end{enumerate}
\end{corollary}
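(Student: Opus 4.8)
The plan is to transfer everything to the monoid of zero-sum sequences $B := \mathcal B(G_P)$. Since $S$ is a saturated and cofinal submonoid of the free monoid $F$, there is a transfer homomorphism $\theta \colon S \to B$ (\cite[Theorem 3.4.10]{Ge-HK06a}), and transfer homomorphisms preserve sets of lengths; hence $\mathcal L(S) = \mathcal L(B)$, so that $\rho(S) = \rho(B)$, $\mathcal V_k(S) = \mathcal V_k(B)$ and $\rho_k(S) = \rho_k(B)$ for all $k \in \mathbb N$, while $\mathsf a\bigl(\mathcal B(G_P)\bigr) = \mathsf a(B)$ by definition. Since primes influence neither the invariants $\rho_k$ nor the sets $\mathcal V_k$, I may assume $0 \notin G_P$, so that every atom of $B$ has length in $[2, \mathsf D(G_P)]$. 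It thus suffices to prove both assertions for $B$.

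For the first assertion I would first establish $\rho_2(B) = \mathsf D(G_P)$. For $U \in \mathcal A(B)$, the hypothesis $G_P = -G_P$ makes $-U := \prod_{g \in G_P} (-g)^{\mathsf v_g(U)}$ an atom of $B$ with $|-U| = |U|$, and
\[
U \cdot (-U) \ = \ \prod_{g \in G_P} \bigl( g(-g) \bigr)^{\mathsf v_g(U)}
\]
is a product of $|U|$ atoms of $B$ of length $2$ (when $g$ has order $2$ one reads $g(-g) = g^2$); hence $\{2, |U|\} \subset \mathsf L\bigl( U(-U) \bigr)$, so $|U| \in \mathcal V_2(B)$ and $\rho_2(B) \ge \mathsf D(G_P)$. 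Conversely, if $u_1 \cdots u_k = v_1 \cdots v_m$ in $B$ with all factors atoms, then $2m \le \sum_j |v_j| = \sum_i |u_i| \le k\mathsf D(G_P)$, whence $\rho_k(B) \le k\mathsf D(G_P)/2$ and in particular $\rho_2(B) \le \mathsf D(G_P)$. So $\rho_2(B) = \mathsf D(G_P)$, and superadditivity of the $\rho_j$ (see the display following Definition \ref{6.1}) combined with the last bound gives $k\mathsf D(G_P) = k\rho_2(B) \le \rho_{2k}(B) \le k\mathsf D(G_P)$, i.e.\ $\rho_{2k}(B) = k\mathsf D(G_P)$ for all $k$. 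Since $\rho(B) = \lim_{k\to\infty}\rho_k(B)/k$ and $\rho_{2k}(B)/(2k) = \mathsf D(G_P)/2$, this yields $\rho(B) = \mathsf D(G_P)/2$ and $2\mathbb N \subset \{k \in \mathbb N \mid \rho_k(B)/k = \rho(B)\}$, which is (1) after transport along $\theta$.

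For the second assertion, note that $\rho_{2k}(B) + \rho_1(B) = k\mathsf D(G_P) + 1 \le \rho_{2k+1}(B) \le (2k+1)\mathsf D(G_P)/2$, so the integers $\rho_{2k+1}(B) - k\mathsf D(G_P)$ (for $k \in \mathbb N$) all lie in $\bigl[\,1, \lfloor \mathsf D(G_P)/2 \rfloor\,\bigr]$; hence their maximum is attained and a minimal $m$ as in the statement exists. Also $\rho_{2m+1}(B) = \sup \mathcal V_{2m+1}(B)$ is a finite supremum of positive integers, so it is attained: choose $a \in B$ with $2m+1 \in \mathsf L(a)$ and $\max \mathsf L(a) = \rho_{2m+1}(B)$, together with $z \in \mathsf Z(a)$ of length $2m+1$ and $z' \in \mathsf Z(a)$ of length $\rho_{2m+1}(B)$. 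The idea is to apply Proposition \ref{6.2}(2) with $l = 2m+1$: if no $k \in [1,2m]$ satisfies $\rho_k(B) + \rho_{2m+1-k}(B) = \rho_{2m+1}(B)$, then $(z,z') \in \mathcal A(\sim_B)$, and consequently
\[
\rho_{2m+1}(S) \ = \ \rho_{2m+1}(B) \ = \ |z'| \ \le \ \mathsf a(B) \ = \ \mathsf a\bigl(\mathcal B(G_P)\bigr) \,.
\]
To verify that hypothesis, write $2m+1 = k + (2m+1-k)$; exactly one summand is even, say $2j$ with $j \in [1,m]$, the other being $2(m-j)+1$. Using $\rho_{2j}(B) = j\mathsf D(G_P)$ from part (1), the equation $\rho_k(B) + \rho_{2m+1-k}(B) = \rho_{2m+1}(B)$ becomes $\rho_{2(m-j)+1}(B) - (m-j)\mathsf D(G_P) = \rho_{2m+1}(B) - m\mathsf D(G_P)$, i.e.\ the index $m-j$ would also realize the maximum; for $1 \le j \le m-1$ this contradicts the minimality of $m$.

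The main obstacle is the remaining case $j = m$, i.e.\ the trivial split $2m+1 = (2m)+1$, which yields $\rho_{2m}(B) + \rho_1(B) = m\mathsf D(G_P) + 1$, equal to $\rho_{2m+1}(B)$ exactly when $\rho_{2m+1}(B) - m\mathsf D(G_P) = 1$. Excluding this — equivalently, reducing to the case where the maximum defining $m$ is at least $2$, or else disposing of it by a direct analysis of the factorizations of the chosen element $a$ — is where the real content of part (2) sits; once it is done, Proposition \ref{6.2}(2) applies as above. The transfer reduction and the computation of the values $\rho_{2k}(B)$ are routine in comparison.
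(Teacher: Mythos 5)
Your argument follows the paper's proof step for step: the paper also reduces everything to $\mathcal B(G_P)$ via \cite[Theorem 3.4.10]{Ge-HK06a} (which it cites both for part (1) and for $\rho_k(S)=\rho_k(G_P)$ --- your direct computation of $\rho_2(B)$ and $\rho_{2k}(B)$ is a correct substitute), takes an extremal relation $U_1\cdot\ldots\cdot U_{2m+1}=V_1\cdot\ldots\cdot V_{\rho}$ with $\rho=\rho_{2m+1}(G_P)$, invokes Proposition \ref{6.2}.2, and excludes a splitting $\rho_k(G_P)+\rho_{2m+1-k}(G_P)=\rho$ by writing the odd summand as $2s+1$, using $\rho_{2(m-s)}(G_P)=(m-s)\mathsf D(G_P)$, and concluding that the index $s$ also attains the maximum, ``a contradiction''.

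The case you leave open ($j=m$, i.e.\ the split $2m+1=1+2m$) is exactly the case $s=0$ of the paper's proof, and there the paper's claimed contradiction is not one: minimality of $m$ over indices in $\N$ says nothing about the index $0$. So you have not overlooked an argument that the paper supplies --- you have located a lacuna in the paper's own proof. The lacuna is genuine as the statement is printed. Take $G_P=C_3\setminus\{0\}$, so that $\mathcal A\big(\mathcal B(G_P)\big)=\{U_1=g^3,\ U_2=(2g)^3,\ V=g(2g)\}$ and $\mathsf D(G_P)=3$; the relation $U_1^kU_2^kV=V^{3k+1}$ together with the bound $\rho_{2k+1}(G_P)\le\lfloor 3(2k+1)/2\rfloor$ gives $\rho_{2k+1}(G_P)=3k+1$ for all $k$, so the maximum equals $1$, the minimal $m\in\N$ is $1$, and $\rho_3(S)=4$; but the only non-diagonal atoms of $\sim_{\mathcal B(G_P)}$ are $(U_1U_2,V^3)$ and its transpose, so $\mathsf a\big(\mathcal B(G_P)\big)=3<4=\rho_{2m+1}(S)$. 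Both proofs (and the statement) are repaired by taking the maximum over $k\in\N_0$, with the term $\rho_1(G_P)-0\cdot\mathsf D(G_P)=1$ included: then in your troublesome case the index $0$ attains the maximum, minimality forces $m=0$, and this is incompatible with the existence of a split index $k\in[1,2m]$. Under that reading your argument closes up and coincides with the paper's.
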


\begin{proof}
1. See \cite[Theorem 3.4.10]{Ge-HK06a}.

\smallskip
2. For every $k \in \mathbb N$, we set (as it is usual) $\rho_k (G_P) = \rho_k \big( \mathcal B (G_P) \big)$,  and by
\cite[Theorem 3.4.10]{Ge-HK06a} we have $\rho_k (S) = \rho_k (G_P)$.
Thus it
suffices to verify that $\rho_{2m+1} (G_P)$ has the asserted upper
bound. Let $U_1, \ldots, U_{2m+1}, V_1, \ldots, V_{\rho} \in
\mathcal A \big( \mathcal B (G_P) \big)$ with $U_1 \cdot \ldots \cdot U_{2m+1} = V_1 \cdot
\ldots \cdot V_{\rho}$ and $\rho = \rho_{2m+1} (G_P)$. We assert
that there is no $k \in [1, 2m]$ such that $\rho_k (G_P) +
\rho_{2m+1-k} (G_P) = \rho$. If this holds, then Proposition
\ref{6.2} implies that $(z= U_1 \cdot \ldots \cdot U_{2m+1}, z' =
V_1 \cdot \ldots \cdot V_{\rho}) \in \mathcal A (\sim_{\mathcal B
(G_P)})$ and hence
\[
\begin{aligned}
\rho_{2m+1} (G_P) & = \rho = \max \{2m+1, \rho\} = \max \{ |z|, |z'| \} \\
& \le \sup \big\{ |x| \mid (x,y) \in \mathcal A (
      \sim_{\mathcal B (G_P)}) \ \text{for some} \ y
      \in \mathsf Z \big( \mathcal B (G_P) \big)  \big\}  = {\mathsf a \big( \mathcal B (G_P) \big) } \,.
\end{aligned}
\]
Assume to the contrary, that there is a $k \in [1, 2m]$ such that
$\rho_k (G_P) + \rho_{2m+1-k} (G_P) = \rho_{2m+1}(G_P)$. Then either $k$
or $2m+1-k$ are odd, say $k = 2s+1$ with $s \in \N_0$. Since, by 1., we have $\rho_{2(m-s)}(G_P) = (m-s)\mathsf D (G_P)$, we infer that
\[
\rho_{2s+1}(G_P) - s \mathsf D (G_P) = \rho_{2m+1} (G_P) -
\rho_{2(m-s)}(G_P) - s \mathsf D (G_P) = \rho_{2m+1} (G_P) - m \mathsf D
(G_P) \,,
\]
a contradiction.
\end{proof}

\medskip
Let all notations be as Corollary \ref{6.4},  and suppose in
addition that $G_P = G$ is finite abelian. In all situations studied
so far, the set
\[
M = \{ k \in \mathbb N \mid \frac{\rho_k (S)}{k} = \rho (S) \} \cup \{0\}
\]
contains an odd element, and hence (by Proposition \ref{6.2} and by Corollary \ref{6.4}.1) $M$ is a numerical monoid. The
standing conjecture is that this holds for all finite abelian groups $G$ (see
\cite{Ge-Gr-Yu12}).

\medskip
Next we deal with the structure of the
unions of sets of lengths. Suppose $S$ is a Krull monoid such that
every class contains a prime divisor. Then it was shown
only recently that, for all $k \in \mathbb N$, the unions $\mathcal
V_k (S)$ are arithmetical progressions with difference $1$ (see
\cite[Theorem 4.1]{Fr-Ge08}, \cite{Ge09a} for a simpler proof, and
also \cite{Ga-Ge09b}). In \cite{Ph13a},  unions of sets of
lengths are studied for non-principal order in number fields, and in  \cite{Ch-Go-Pe01}, for domains of the form $V + XB[X]$, where $V$
is a discrete valuation domain and $B$ the ring of integers in a
finite extension field over the quotient field of $V$. In
\cite{A-C-H-P07a}, S.T. Chapman et al. showed that in numerical
monoids, generated by arithmetical progressions, all unions are
arithmetical progressions. We are going  to generalize
this result.

\medskip
\begin{proposition} \label{6.5}
Let $S$ be a numerical monoid with $\mathcal A (S) = \{n_1, \ldots,
n_t\}$, where $t \in \mathbb N$, $1 < n_1 < \ldots < n_t$,  and $d =
\gcd ( n_2-n_1, \ldots, n_t - n_{t-1} )$. Suppose that the
Diophantine equations
\[
(n_2-n_1) x_2 + \ldots + (n_t-n_1)x_t = dn_1 \quad \text{and} \quad
(n_t-n_1)y_1 + \ldots + (n_t- n_{t-1})y_{t-1} = d n_t
\]
have solutions in the non-negative integers. Then there exists an
element $a^* \in S$ such that $\rho (a^*) = \rho (S)$ and $\mathsf L
(a^*) $ is an arithmetical progression with difference $d$.
\end{proposition}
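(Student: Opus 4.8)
The plan is to exhibit a suitable large multiple $a^\ast$ of $n_1n_t$ and to show \emph{directly} that its set of lengths contains every admissible integer between $\min \mathsf L(a^\ast)$ and $\max\mathsf L(a^\ast)$.

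First I would record three preliminary observations. Since $d\mid n_i-n_1$ for all $i$ and $\gcd(n_1,\dots,n_t)=1$, one has $\gcd(n_1,d)=1$, hence $n_i\equiv n_1\pmod d$ for all $i$, and consequently any two factorization lengths of one and the same element of $S$ are congruent modulo $d$. Next, by Corollary \ref{6.3} we have $\rho(S)=n_t/n_1$; and for any $a$ with $\lcm(n_1,n_t)\mid a$ the estimates $a=\sum k_in_i\le n_t\sum k_i$ and $a=\sum k_in_i\ge n_1\sum k_i$, valid for any factorization $z=\sum k_i\boldsymbol{n_i}$ of $a$, force $\min\mathsf L(a)=a/n_t$ and $\max\mathsf L(a)=a/n_1$, so that $\rho(a)=\rho(S)$. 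Thus it suffices to produce a multiple $a^\ast$ of $n_1n_t$ whose set of lengths contains every integer $\ell$ with $a^\ast/n_t\le\ell\le a^\ast/n_1$ and $\ell\equiv a^\ast/n_t\pmod d$.

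Second, I would fix non-negative solutions $(\alpha_i)_{i=2}^{t}$ and $(\beta_j)_{j=1}^{t-1}$ of the two given equations, put $A=\sum_{i=2}^{t}\alpha_i$ and $B=\sum_{j=1}^{t-1}\beta_j$ (both right-hand sides being positive, $A\ge 1$ and $B>d$), and form the factorizations $w_1=\prod_{i=2}^{t}\boldsymbol{n_i}^{\alpha_i}$ and $w_2=\prod_{j=1}^{t-1}\boldsymbol{n_j}^{\beta_j}$ in $\mathsf Z(S)$; rewriting the equations gives $|w_1|=A$, $\pi(w_1)=n_1(A+d)$, $|w_2|=B$, $\pi(w_2)=n_t(B-d)$. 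Now for $N\in\N$ set $a^\ast=Nn_1n_t$ (so $a^\ast\in S$). For $X,Y\in\N_0$ with $n_1X+n_tY=a^\ast$ the word $\boldsymbol{n_1}^{X}\boldsymbol{n_t}^{Y}$ factors $a^\ast$ with length $X+Y$, and for $0\le j\le\lfloor X/(A+d)\rfloor$ and $0\le k\le\lfloor Y/(B-d)\rfloor$ the word
\[
w_1^{\,j}\,w_2^{\,k}\,\boldsymbol{n_1}^{\,X-j(A+d)}\,\boldsymbol{n_t}^{\,Y-k(B-d)}
\]
factors $a^\ast$ with length $(X+Y)+(k-j)d$: replacing $A+d$ copies of $\boldsymbol{n_1}$ by $w_1$ lowers the length by $d$, and replacing $B-d$ copies of $\boldsymbol{n_t}$ by $w_2$ raises it by $d$. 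So from the ``base pair'' $(X,Y)$ one reaches every length in the interval $I_{X,Y}=[(X+Y)-\lfloor X/(A+d)\rfloor d,\ (X+Y)+\lfloor Y/(B-d)\rfloor d]$ lying in the common residue class modulo $d$.

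Finally I would run the covering argument. Because $n_1n_t\mid a^\ast$, with $g=\gcd(n_1,n_t)$ the non-negative solutions of $n_1X+n_tY=a^\ast$ are exactly $(X_s,Y_s)=(s\,n_t/g,\ a^\ast/n_t-s\,n_1/g)$ for $s=0,1,\dots,Ng$; here $X_0+Y_0=a^\ast/n_t$, $X_{Ng}+Y_{Ng}=a^\ast/n_1$, and $(X_{s+1}+Y_{s+1})-(X_s+Y_s)=(n_t-n_1)/g$ is a constant independent of $N$. Each $I_{X_s,Y_s}$ lies in $[a^\ast/n_t,a^\ast/n_1]$, the interval $I_{X_0,Y_0}$ has left endpoint $a^\ast/n_t$, and $I_{X_{Ng},Y_{Ng}}$ has right endpoint $a^\ast/n_1$. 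The key point is a size comparison: since $X_s+Y_s\ge a^\ast/n_t$, one of $X_s,Y_s$ is at least $a^\ast/(2n_t)$, so the length of $I_{X_s,Y_s}$ grows linearly in $N$, uniformly in $s$; whereas the left endpoints of $I_{X_s,Y_s}$ and $I_{X_{s+1},Y_{s+1}}$ differ by at most $(n_t-n_1)/g+\lceil n_t/(g(A+d))\rceil d$, a bound independent of $N$. Hence for $N$ large enough consecutive intervals $I_{X_s,Y_s}$ overlap, their union is all of $[a^\ast/n_t,a^\ast/n_1]$, and together with the preliminary observations this forces $\mathsf L(a^\ast)=\{a^\ast/n_t,a^\ast/n_t+d,\dots,a^\ast/n_1\}$ and $\rho(a^\ast)=\rho(S)$. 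I expect this size comparison to be the only delicate point; fixing the two solutions and verifying the length computations above is routine bookkeeping.
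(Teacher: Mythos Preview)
Your argument is correct, and it takes a genuinely different route from the paper's proof.

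The paper invokes a heavy structure theorem for sets of lengths (\cite[Theorem 4.3.6.1]{Ge-HK06a}): there is an element $\bar a$ such that for every $b$ the set $\mathsf L(\bar a b)$ has the shape $y+(L'\cup L^*\cup L'')$, where $L^*$ is an arithmetical progression with difference $d$ and the ``tails'' $L',L''$ lie in intervals of length at most $\mathsf t\bigl(S,\mathsf Z(\bar a)\bigr)$. The two Diophantine solutions are then used only at the two ends of $\mathsf L(a^*)$, to exhibit arithmetical progressions of length exceeding that tame bound starting at $\min\mathsf L(a^*)$ and ending at $\max\mathsf L(a^*)$; this forces $L'$ and $L''$ themselves to be arithmetical progressions with difference $d$, and the structure theorem then supplies the middle.

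You bypass the structure theorem entirely. Instead of just the two extreme base factorizations $(a^*/n_1)\boldsymbol{n_1}$ and $(a^*/n_t)\boldsymbol{n_t}$, you run over \emph{all} base pairs $(X_s,Y_s)$ with $n_1X_s+n_tY_s=a^*$, use the Diophantine solutions to expand each into an interval $I_{X_s,Y_s}$ of realized lengths, and show by a direct growth-versus-gap estimate that for $N$ large the consecutive intervals overlap and cover the whole arithmetic progression from $a^*/n_t$ to $a^*/n_1$. The size comparison you flag as the delicate point is indeed the heart of it: since $X_s+Y_s\ge a^*/n_t=Nn_1$, one of $X_s,Y_s$ is $\ge Nn_1/2$, so $|I_{X_s,Y_s}|$ grows linearly in $N$, while the step $(n_t-n_1)/g$ between consecutive centers $C_s=X_s+Y_s$ (and hence the offset between adjacent intervals) is independent of $N$. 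One clean way to finish is: given $\ell$ in the target progression, take the largest $s$ with $C_s\le \ell$; if $s<Ng/2$ then $Y_s>Nn_1/2$ and $\ell\in I_{X_s,Y_s}$, while if $s\ge Ng/2$ then $X_{s+1}>Nn_t/2$ and $\ell\in I_{X_{s+1},Y_{s+1}}$.

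What each approach buys: the paper's proof is short once the structure theorem is granted and illustrates how that general machinery specializes here; your proof is fully elementary and self-contained, at the price of an explicit covering argument with several base pairs.
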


\begin{proof}
We proceed in several steps.

{\bf 1.} Let $a \in \mathbb N$ be a multiple of $n_1$ and of $n_t$.
Then
\[
z = \frac{a}{n_1} {\boldsymbol {n_1}} \quad \text{and} \quad  z' =
\frac{a}{n_t} {\boldsymbol {n_t}}
\]
are factorizations of $a$. Obviously, we have $\min \mathsf L (a) =
a/n_t$, $\max \mathsf L (a) = a/n_1$ and hence $\rho (a) = n_1^{-1}
n_t$. By Corollary \ref{6.3}.1 it follows that $\rho (a) = \rho
(S)$.

{\bf 2.} Since $S$ is finitely generated,  Proposition \ref{5.2}.2 and Equation \ref{basic} imply that  $S$ is
locally tame with finite  set of distances $\Delta (S)$, and $\Delta (S) \ne \emptyset$ because $\rho (S) > 1$.
Thus \cite[Theorem 4.3.6.1]{Ge-HK06a} implies that there is an
$\bar{a} \in S$ with the following property: for every $b \in S$ we
have
\[
\mathsf L (\bar{a}b) = y + (L' \cup L^* \cup L'') \subset y + d \Z
\]
where $y \in \mathbb Z$, $L^*$ is an arithmetical progression with
difference $d$, $\min L^* = 0$, $L' \subset \big[- \mathsf t \big(S, \mathsf
Z (\bar{a}) \big), -1 \big]$ and $L'' \subset \max L^* + \big[1, \mathsf t \big(S,
\mathsf Z (\bar{a}) \big) \big]$.

{\bf 3.} Let $(\alpha_2, \ldots, \alpha_t) \in \mathbb N_0^{t-1}$
and $(\beta_1, \ldots, \beta_{t-1}) \in \mathbb N_0^{t-1}$ be
solutions of the given Diophantine equations, and set
\[
\alpha_1 = \alpha_2 + \ldots + \alpha_t \quad \text{and} \quad
\beta_t = - (\beta_1+ \ldots + \beta_{t-1}) \,.
\]
Now let $a^* \in \mathbb N$ be a multiple of  $\lcm (\bar{a}, n_1,
n_t)$ such that
\[
\frac{a^*}{n_1}  \ge \gamma (d + \alpha_1) \quad \text{and} \quad
\frac{a^*}{n_t}  \ge \gamma |d + \beta_t| \quad \text{where} \quad
\gamma = \Big\lceil \frac{ \mathsf t \big(S, \mathsf Z (\bar{a}) \big)}{d}
\Big\rceil \,.
\]
We assert that $a^*$ has the required properties. By {\bf 1.}, it
follows that
\[
\min \mathsf L (a^*) = \frac{a^*}{n_t}, \ \max \mathsf L (a^* ) =
\frac{a^*}{n_1} \quad \text{and} \quad \rho (a^* ) = \rho (S) \,.
\]
We set $a^* = \bar{a} b$ with $b \in S$, and write $\mathsf L (a^*)$
in the form $\mathsf L (a^*) = y + (L' \cup L^* \cup L'') \subset y
+ d \mathbb Z$ with all properties as in {\bf 2.} (note that such a
representation need not be unique).

Let $\nu \in [0, \gamma]$. Then
\[
x_{\nu} = \Big( \frac{a^*}{n_1} - \nu(d+\alpha_1)\Big){\boldsymbol
{n_1}} + \nu \alpha_2 {\boldsymbol {n_2}} + \ldots + \nu \alpha_t
{\boldsymbol {n_t}}
\]
is a factorization of $a^*$ of length
\[
|x_{\nu}| = \frac{a^*}{n_1}  - \nu d - \nu (\alpha_1 - \alpha_2 -
\ldots - \alpha_t) = \max \mathsf L(a^*) - \nu d \in \mathsf L (a^*)
\,.
\]
Similarly,
\[
y_{\nu} = \Big( \frac{a^*}{n_t} + \nu(d + \beta_t) \Big){\boldsymbol
{n_t}} + \nu \beta_1 {\boldsymbol {n_1}} + \ldots + \nu  \beta_{t-1}
{\boldsymbol {n_{t-1}}}
\]
is a factorization of $a^*$ of length
\[
|y_{\nu}| = \frac{a^*}{n_t} + \nu d + \nu (\beta_1+ \ldots +
\beta_{t-1}+\beta_t) = \min \mathsf L (a^*) + \nu d \in \mathsf L
(a^*) \,.
\]
This reveals that $\mathsf L (a^*)$ starts and ends with
arithmetical progressions having difference $d$ and $(\gamma + 1)$
elements. Thus it follows that $L'$ and $L''$ are  (possibly empty)
arithmetical progressions with difference $d$, and thus $\mathsf L
(a^*)$ is an arithmetical progression with difference $d$.
\end{proof}

\medskip
\begin{theorem} \label{6.6}
Let $S$ be a numerical monoid with $\mathcal A (S) = \{n_1, \ldots,
n_t\}$ where $t \in \mathbb N$, $1 < n_1 < \ldots < n_t$,  and $d =
\gcd ( n_2-n_1, \ldots, n_t - n_{t-1} )$. Suppose that the
Diophantine equations
\[
(n_2-n_1) x_2 + \ldots + (n_t-n_1)x_t = dn_1 \quad \text{and} \quad
(n_t-n_1)y_1 + \ldots + (n_t- n_{t-1})y_{t-1} = d n_t
\]
have solutions in the non-negative integers. Then there exists a $k^*
\in \mathbb N$ such that $\mathcal V_k (S)$ is an arithmetical
progression with difference $d$ for all $k \ge k^*$, and
\[
      \lim_{k \to \infty} \frac{|\mathcal V_k (S)|}{k} = \frac{1}{d}
      \Bigl( \frac{n_t}{n_1} - \frac{n_1}{n_t} \Bigr) \,.
      \]
\end{theorem}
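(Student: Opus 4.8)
The plan is to prove that for all sufficiently large $k$ one has
\[
\mathcal V_k (S) = [\lambda_k (S), \rho_k (S)] \cap \bigl( k + d\Z \bigr) \,,
\]
which is an arithmetical progression with difference $d$; the asymptotic formula then follows at once, since $|\mathcal V_k (S)| = \bigl( \rho_k (S) - \lambda_k (S) \bigr)/d + 1$ and, by the discussion after Definition \ref{6.1} together with Corollary \ref{6.3}.1, $\rho_k (S)/k \to \rho (S) = n_t/n_1$ and $\lambda_k (S)/k \to 1/\rho (S) = n_1/n_t$.

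First I would record the basic bounds. Since $S$ is finitely generated, $\mathsf c (S) < \infty$, hence $\Delta (S)$ is finite by Inequality \ref{basic}; put $D = \max \Delta (S)$. As $\min \Delta (S) = d$ by Corollary \ref{6.3}.1 and $d$ divides every element of $\Delta (S)$, every $L \in \mathcal L (S)$ lies in a single residue class modulo $d$, so $\mathcal V_k (S) \subset k + d\Z$ with $\min \mathcal V_k (S) = \lambda_k (S)$ and $\max \mathcal V_k (S) = \rho_k (S)$; moreover $\mathcal V_k (S)$ has all gaps at most $D$ (the sets of lengths whose union is $\mathcal V_k (S)$ all contain $k$ and have consecutive distances in $\Delta (S)$). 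By Proposition \ref{6.5} there is an $a^* \in S$ with $\rho (a^*) = \rho (S)$ and $\mathsf L (a^*)$ an arithmetical progression with difference $d$; replacing $a^*$ by a suitable power I may assume $\rho_0 - k_0 \ge D$, where $k_0 = \min \mathsf L (a^*)$ and $\rho_0 = \max \mathsf L (a^*)$, so $\rho_0 / k_0 = \rho (S)$. Using $\rho \bigl( (a^*)^j \bigr) \le \rho (S)$ one checks that $\mathsf L \bigl( (a^*)^j \bigr) = [jk_0, j\rho_0] \cap (jk_0 + d\Z)$ for every $j \in \N$; in particular $\rho_{k_0} (S) = \rho_0$.

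The key technical step is the eventual quasi-linearity of $k \mapsto \rho_k (S)$. Consider $h(k) = k \rho (S) - \rho_k (S) \ge 0$. Superadditivity gives $\rho_{k+k_0} (S) \ge \rho_k (S) + \rho_{k_0} (S) = \rho_k (S) + \rho_0$, and since $k_0 \rho (S) = \rho_0$ this yields $h(k+k_0) \le h(k)$; thus along each residue class modulo $k_0$ the function $h$ is non-increasing, and as it takes values in $\tfrac{1}{n_1} \Z_{\ge 0}$ it is eventually constant. Hence there is a $k_1$ with $\rho_{k+k_0} (S) = \rho_k (S) + \rho_0$ for all $k \ge k_1$. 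From this I obtain the ``upward half'': for all large $k$, $[k, \rho_k (S)] \cap (k + d\Z) \subset \mathcal V_k (S)$. Indeed, write $k = k' + jk_0$ with $k' \in [k_1, k_1 + k_0)$ and $j \ge 1$; then $\rho_k (S) = \rho_{k'} (S) + j\rho_0$, and
\[
\mathcal V_k (S) \ \supset\ \mathcal V_{k'} (S) + \mathcal V_{jk_0} (S) \ \supset\ \mathcal V_{k'} (S) + \bigl( [jk_0, j\rho_0] \cap (jk_0 + d\Z) \bigr) \,.
\]
As $\mathcal V_{k'} (S)$ has gaps at most $D \le j(\rho_0 - k_0)$, this sumset equals $[\lambda_{k'} (S) + jk_0, \rho_{k'} (S) + j\rho_0] \cap (k + d\Z) = [\lambda_{k'} (S) + jk_0, \rho_k (S)] \cap (k + d\Z)$, which contains $[k, \rho_k (S)] \cap (k + d\Z)$ because $\lambda_{k'} (S) \le k'$.

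Finally I would deduce the full statement from the upward half. Since $\lambda_k (S) \ge k/\rho (S) \to \infty$, for $k$ large enough and any $m \in [\lambda_k (S), k] \cap (k + d\Z)$ the index $m$ is again large; as $m \ge \lambda_k (S)$ we have $\rho_m (S) \ge \rho_{\lambda_k (S)} (S) \ge k$ (using that $m \mapsto \rho_m(S)$ is non-decreasing by superadditivity, and that $\lambda_k (S) \in \mathcal V_k (S)$, whence $k \in \mathcal V_{\lambda_k (S)} (S)$), so $k \in [m, \rho_m (S)] \cap (m + d\Z) \subset \mathcal V_m (S)$ by the upward half, i.e.\ $m \in \mathcal V_k (S)$. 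Combining $[\lambda_k (S), k] \cap (k + d\Z) \subset \mathcal V_k (S)$ with the upward half and with $\mathcal V_k (S) \subset [\lambda_k (S), \rho_k (S)] \cap (k + d\Z)$ gives equality. The only real friction lies in this circle of arguments: multiplying by powers of $a^*$ repairs the gaps in sets of lengths but shifts the length index, and it is exactly the quasi-linearity of $\rho_k (S)$ that compensates for the shift, so the main care needed is to keep all indices beyond the stabilization threshold; the limit formula is then immediate.
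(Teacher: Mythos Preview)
Your argument is correct. The paper's own proof is a two-line appeal: it invokes Proposition~\ref{6.5} to produce the element $a^*$ with $\rho(a^*)=\rho(S)$ and $\mathsf L(a^*)$ an arithmetical progression of difference $d$, and then simply cites Theorem~3.1 of \cite{Fr-Ge08}, which takes exactly such an $a^*$ as input and outputs the conclusion of Theorem~\ref{6.6}. You use the same Proposition~\ref{6.5} as the starting point, but instead of quoting the external structural theorem you supply a direct, self-contained proof: the eventual additivity $\rho_{k+k_0}(S)=\rho_k(S)+\rho_0$ via the monotone bounded function $h(k)=k\rho(S)-\rho_k(S)$, the sumset step $\mathcal V_{k'}(S)+\mathsf L\bigl((a^*)^j\bigr)$ to fill the interval $[k,\rho_k(S)]\cap(k+d\Z)$, and the duality $k\in\mathcal V_m(S)\Leftrightarrow m\in\mathcal V_k(S)$ to recover the lower half. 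In effect you are reproving, in this concrete setting, the portion of \cite[Theorem~3.1]{Fr-Ge08} that the paper invokes. The gain of your route is self-containment and an explicit mechanism (quasi-linearity of $k\mapsto\rho_k(S)$ plus the gap bound $D=\max\Delta(S)$); the paper's route is shorter on the page and situates the result within the general machinery for unions of sets of lengths. Two minor points worth making explicit in a final write-up: the inclusion $\Delta(S)\subset d\N$ (hence $\mathcal V_k(S)\subset k+d\Z$) uses $\gcd(n_1,d)=1$, which follows from $\gcd(n_1,\ldots,n_t)=1$; and the threshold for the downward half is governed by $\lambda_k(S)\ge k/\rho(S)$, so one may take $k^*\ge\rho(S)(k_1+k_0)$.
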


\begin{proof}
By Proposition \ref{6.5}, all assumptions in \cite[Theorem
3.1]{Fr-Ge08} are satisfied, and hence this result implies the
assertion.
\end{proof}

\medskip
\begin{remarks} \label{6.7}~

\smallskip
1. If $\mathcal A (S)$ is an arithmetical progression, then all sets
$\mathcal V_k (S)$ are arithmetical progressions (see \cite[Theorem
2.7]{A-C-H-P07a}). However, in general, we have $k^*
> 2$. Indeed,  $S = \langle 4, 5, 13, 14 \rangle$ satisfies the assumptions of Theorem \ref{6.6}, but since $\mathcal
V_2 (S) = \{2, 6, 7\}$ is not an arithmetical progression, it follows that $k^* > 2$.

\smallskip
2. Unions of sets of lengths in finitely generated monoids  are
almost arithmetical progressions (see \cite[Theorems 3.5 and
4.2]{Ga-Ge09b}). But even in a numerical monoid, there may exist infinitely many $k \in \mathbb N$, for which these unions are not  arithmetical
progressions, as the following example shows.

Let $S = \langle 4,10, 21 \rangle$ and $k \in \mathbb N$. Then $d =
\gcd (6, 11) = 1$. We assert that  $\mathcal V_k (S)$ is not an
arithmetical progression with difference $1$. We set $S_k = \{ a \in
S \mid k \in \mathsf L (a) \}$ and observe that
\[
S_k = \{ a\mathbf{4}+b\mathbf{10}+c\mathbf{21} \mid a, b, \ c \in
\mathbb N_0 \ \text{with} \ a+b+c=k \}, \ \min S_k = 4k \quad
\text{and} \quad \max S_k = 21k \,.
\]
In particular, we see that $S_k = \{ 4k, \ldots, 21k-28, 21k-22,
21k-17, 21k-11, 21k \}$, where the elements are written down in
increasing order. The element $21k$ has a unique factorization of
maximal length, namely
\[\left\{
\begin{array}{ll}
21t\mathbf{4} & \hbox{if } k=4t,\\
21t\mathbf{4}+\mathbf{21} & \hbox{if } k=4t+1,\\
(21t+8)\mathbf{4}+\mathbf{10} & \hbox{if } k=4t+2,\\
(21t+8)\mathbf{4}+\mathbf{10}+\mathbf{21} & \hbox{if } k=4t+3.
\end{array}
\right.
\]
Setting $l = \max \mathsf L (21k)$ we assert that there is no $s \in
S_k$ with $l-1 \in \mathsf L (s)$. If this holds, then $\mathcal V_k (S)$ is not an
arithmetical progression with difference $1$. To verify our assertion we distinguish four cases.
\begin{itemize}
\item If $k=4t$, then $l=21t$. An element with a factorization of length $21t-1$ is greater than or equal to $(21t-1)4=(21t)4-4>21k-11$, and thus it does not belong to $S_k$.

\smallskip
\item If $k=4t+1$, then $l=21t+1$. Elements having a factorization of length $21t$ are $(21t)4$, $(21t-1)4+10=(21t)4+6$, $(21t-2)4+2\cdot 10=21t+12$, $(21t-1)4+21=(21t)4+17$, $(21t-2)4+10+21=(21t)4+23$,\ldots. In this setting the four largest elements of $S_k$ are $(21t)4+21$, $(21t)4+10$, $(21t)4+4$ and $(21t)4-1$. Hence also in this case, there is no element in $S_k$ having a factorization of length $l-1$.

\smallskip
\item If $k=4t+2$, then $l=21t+9$. The set of elements having a factorization of length $21t+8$ is $\{(21t+8)4=(21t)4+32, (21t+7)4+10=(21t)4+39,\ldots,(21t+8)21\}$, and the two largest elements of $S_k$ are $(21t)4+42$ and $(21t)4+31$. Again we see that no element in $S_k$ can have a factorization of length $l-1$.

\smallskip
\item If $k=4t+3$, then $l=21t+10$. Arguing as above one easily checks that no elements in $S_k$ have factorizations of length $l-1$.
\end{itemize}
\end{remarks}

\medskip
In view of Theorem \ref{6.6} and the Remarks \ref{6.7} we end this paper with the formulation of the following problem.

\medskip
\noindent
{\bf Open Problem.} Characterize the numerical monoids $S$ for which there exists a $k^* \in \mathbb N$ such that the unions of sets of lengths $\mathcal V_k (S)$
are arithmetical progressions for all $k \ge k^*$.

\bigskip


\providecommand{\bysame}{\leavevmode\hbox to3em{\hrulefill}\thinspace}
\providecommand{\MR}{\relax\ifhmode\unskip\space\fi MR }
\providecommand{\MRhref}[2]{%
  \href{http://www.ams.org/mathscinet-getitem?mr=#1}{#2}
}
\providecommand{\href}[2]{#2}

\end{document}